 \def\dated#1{\def\thedate{#1}}%
\newdimen\high%
\newdimen\ul%
\newdimen\wdth%
\def\ratchet#1#2{\ifnum#1<#2\global #1=#2\fi}%
\def\ifnextchar#1#2#3{\let\@tempe%
#1\def\@tempa{#2}\def\@tempb{#3}\futurelet%
    \@tempc\@ifnch}%
\def\@ifnch{\ifx \@tempc \@sptoken \let\@tempd\@xifnch%
      \else \ifx \@tempc \@tempe\let\@tempd\@tempa\else\let\@tempd\@tempb\fi%
      \fi \@tempd}%
\def\:{\let\@sptoken= } \:  
\def\:{\@xifnch} \expandafter\def\: {\futurelet\@tempc\@ifnch}%
\let\ifnextchar\@ifnextchar%
\newdimen\axis \axis=\fontdimen22\textfont2%
\def\scalefactor#1{\ul=#1\ul \X@xbase=#1\X@xbase \Y@ybase=#1\Y@ybase}%
\def\fontscale#1{%
\if#1h\relax%
\font\xydashfont=xydash10 scaled \magstephalf%
\font\xyatipfont=xyatip10 scaled \magstephalf%
\font\xybtipfont=xybtip10 scaled \magstephalf%
\font\xybsqlfont=xybsql10 scaled \magstephalf%
\font\xycircfont=xycirc10 scaled \magstephalf%
\else%
\font\xydashfont=xydash10 scaled \magstep#1%
\font\xyatipfont=xyatip10 scaled \magstep#1%
\font\xybtipfont=xybtip10 scaled \magstep#1%
\font\xybsqlfont=xybsql10 scaled \magstep#1%
\font\xycircfont=xycirc10 scaled \magstep#1%
\fi}%
\def\bfig{\vcenter\bgroup\xy}%
\def\efig{\endxy\egroup}%
\def\car#1#2\nil{#1}%
\def\morphism{\ifnextchar({\morphismp}{\morphismp(0,0)}}%
\def\morphismp(#1){\ifnextchar|{\morphismpp(#1)}{\morphismpp(#1)|a|}}%
\def\morphismpp(#1)|#2|{\ifnextchar/{\morphismppp(#1)|#2|}%
    {\morphismppp(#1)|#2|/>/}}%
\def\morphismppp(#1)|#2|/#3/{%
    \ifnextchar<{\morphismpppp(#1)|#2|/#3/}%
    {\morphismpppp(#1)|#2|/#3/<\default,0>}}%
\def\morphismpppp(#1,#2)|#3|/#4/<#5,#6>[#7`#8;#9]{%
\xend#1\advance \xend by #5%
\yend#2\advance \yend by #6%
\domorphism(#1,#2)|#3|/#4/<#5,#6>[{#7}`{#8};{#9}]}%
\def\domorphism(#1,#2)|#3|/#4/<#5,#6>[#7`#8;#9]{%
\def\next{\car#4.\nil}%
\if@\next\relax%
 \if#3l%
  \ifnum #6>0%
   \POS(#1,#2)*+!!<0ex,\axis>{#7}\ar#4^-{#9} (\xend,\yend)*+!!<0ex,\axis>{#8}%
  \else%
   \POS(#1,#2)*+!!<0ex,\axis>{#7}\ar#4_-{#9} (\xend,\yend)*+!!<0ex,\axis>{#8}%
  \fi%
 \else \if#3m%
    \setbox0\hbox{$#9$}%
   \ifdim \wd0=0pt%
     \POS(#1,#2)*+!!<0ex,\axis>{#7}\ar#4 (\xend,\yend)*+!!<0ex,\axis>{#8}%
   \else%
     \POS(#1,#2)*+!!<0ex,\axis>{#7}\ar#4|-*+<1pt,4pt>{\labelstyle#9}%
       (\xend,\yend)*+!!<0ex,\axis>{#8}%
   \fi%
 \else \if#3r%
  \ifnum #6<0%
   \POS(#1,#2)*+!!<0ex,\axis>{#7}\ar#4^-{#9} (\xend,\yend)*+!!<0ex,\axis>{#8}%
  \else%
   \POS(#1,#2)*+!!<0ex,\axis>{#7}\ar#4_-{#9} (\xend,\yend)*+!!<0ex,\axis>{#8}%
  \fi%
 \else \if#3a%
  \ifnum #5>0%
   \POS(#1,#2)*+!!<0ex,\axis>{#7}\ar#4^-{#9} (\xend,\yend)*+!!<0ex,\axis>{#8}%
  \else%
   \POS(#1,#2)*+!!<0ex,\axis>{#7}\ar#4_-{#9} (\xend,\yend)*+!!<0ex,\axis>{#8}%
  \fi%
 \else \if#3b%
  \ifnum #5<0%
   \POS(#1,#2)*+!!<0ex,\axis>{#7}\ar#4^-{#9} (\xend,\yend)*+!!<0ex,\axis>{#8}%
  \else%
   \POS(#1,#2)*+!!<0ex,\axis>{#7}\ar#4_-{#9} (\xend,\yend)*+!!<0ex,\axis>{#8}%
  \fi%
 \else%
   \POS(#1,#2)*+!!<0ex,\axis>{#7}\ar#4 (\xend,\yend)*+!!<0ex,\axis>{#8}%
 \fi\fi\fi\fi\fi%
\else%
 \if#3l%
  \ifnum #6>0%
   \POS(#1,#2)*+!!<0ex,\axis>{#7}\ar@{#4}^-{#9} (\xend,\yend)*+!!<0ex,\axis>{#8}%
  \else%
   \POS(#1,#2)*+!!<0ex,\axis>{#7}\ar@{#4}_-{#9} (\xend,\yend)*+!!<0ex,\axis>{#8}%
  \fi%
 \else \if#3m%
    \setbox0\hbox{$#9$}%
   \ifdim \wd0=0pt%
     \POS(#1,#2)*+!!<0ex,\axis>{#7}\ar@{#4} (\xend,\yend)*+!!<0ex,\axis>{#8}%
   \else%
     \POS(#1,#2)*+!!<0ex,\axis>{#7}\ar@{#4}|-*+<1pt,4pt>{\labelstyle#9}%
         (\xend,\yend)*+!!<0ex,\axis>{#8}%
   \fi%
 \else \if#3r%
  \ifnum #6<0%
   \POS(#1,#2)*+!!<0ex,\axis>{#7}\ar@{#4}^-{#9} (\xend,\yend)*+!!<0ex,\axis>{#8}%
  \else%
   \POS(#1,#2)*+!!<0ex,\axis>{#7}\ar@{#4}_-{#9} (\xend,\yend)*+!!<0ex,\axis>{#8}%
  \fi%
 \else \if#3a%
  \ifnum #5>0%
   \POS(#1,#2)*+!!<0ex,\axis>{#7}\ar@{#4}^-{#9} (\xend,\yend)*+!!<0ex,\axis>{#8}%
  \else%
   \POS(#1,#2)*+!!<0ex,\axis>{#7}\ar@{#4}_-{#9} (\xend,\yend)*+!!<0ex,\axis>{#8}%
  \fi%
 \else \if#3b%
  \ifnum #5<0%
   \POS(#1,#2)*+!!<0ex,\axis>{#7}\ar@{#4}^-{#9} (\xend,\yend)*+!!<0ex,\axis>{#8}%
  \else%
   \POS(#1,#2)*+!!<0ex,\axis>{#7}\ar@{#4}_-{#9} (\xend,\yend)*+!!<0ex,\axis>{#8}%
  \fi%
 \else%
   \POS(#1,#2)*+!!<0ex,\axis>{#7}\ar@{#4} (\xend,\yend)*+!!<0ex,\axis>{#8}%
 \fi\fi\fi\fi\fi%
\fi\ignorespaces}%
\def\vect(#1,#2)/#3/<#4,#5>{%
 \xend#1 \yend#2 \advance\xend by #4 \advance\yend by #5%
     \POS(#1,#2)\ar#3 (\xend,\yend)}%
\def\squarepppp(#1,#2)|#3|/#4`#5`#6`#7/<#8>[#9]{%
\xpos#1\ypos#2%
\def\next|##1##2##3##4|{%
 \def\xa{##1}\def\xb{##2}\def\xc{##3}\def\xd{##4}\ignorespaces}%
\next|#3|%
\def\next<##1,##2>{\deltax=##1\deltay=##2\ignorespaces}%
\next<#8>%
\def\next[##1`##2`##3`##4;##5`##6`##7`##8]{%
    \def\nodea{##1}\def\nodeb{##2}\def\nodec{##3}\def\noded{##4}%
    \def\labela{##5}\def\labelb{##6}\def\labelc{##7}\def\labeld{##8}\ignorespaces}%
\next[#9]%
\morphism(\xpos,\ypos)|\xd|/{#7}/<\deltax,0>[\nodec`\noded;\labeld]%
\advance \ypos by \deltay%
\morphism(\xpos,\ypos)|\xb|/{#5}/<0,-\deltay>[\nodea`\nodec;\labelb]%
\morphism(\xpos,\ypos)|\xa|/{#4}/<\deltax,0>[\nodea`\nodeb;\labela]%
 \advance \xpos by \deltax%
\morphism(\xpos,\ypos)|\xc|/{#6}/<0,-\deltay>[\nodeb`\noded;\labelc]%
\ignorespaces}%
\def\square{\ifnextchar({\squarep}{\squarep(0,0)}}%
\def\squarep(#1){\ifnextchar|{\squarepp(#1)}{\squarepp(#1)|alrb|}}%
\def\squarepp(#1)|#2|{\ifnextchar/{\squareppp(#1)|#2|}%
    {\squareppp(#1)|#2|/>`>`>`>/}}%
\def\squareppp(#1)|#2|/#3`#4`#5`#6/{%
    \ifnextchar<{\squarepppp(#1)|#2|/#3`#4`#5`#6/}%
    {\squarepppp(#1)|#2|/#3`#4`#5`#6/<\default,\default>}}%
\def\ptrianglepppp(#1,#2)|#3|/#4`#5`#6/<#7>[#8]{%
\xpos#1\ypos#2%
\def\next|##1##2##3|{\def\xa{##1}\def\xb{##2}\def\xc{##3}}%
\next|#3|%
\def\next<##1,##2>{\deltax=##1\deltay=##2\ignorespaces}%
\next<#7>%
\def\next[##1`##2`##3;##4`##5`##6]{%
    \def\nodea{##1}\def\nodeb{##2}\def\nodec{##3}%
    \def\labela{##4}\def\labelb{##5}\def\labelc{##6}}%
\next[#8]%
\advance\ypos by \deltay%
\morphism(\xpos,\ypos)|\xa|/{#4}/<\deltax,0>[\nodea`\nodeb;\labela]%
\morphism(\xpos,\ypos)|\xb|/{#5}/<0,-\deltay>[\nodea`\nodec;\labelb]%
\advance\xpos by \deltax%
\morphism(\xpos,\ypos)|\xc|/{#6}/<-\deltax,-\deltay>[\nodeb`\nodec;\labelc]%
\ignorespaces}%
\def\qtrianglepppp(#1,#2)|#3|/#4`#5`#6/<#7>[#8]{%
\xpos#1\ypos#2%
\def\next|##1##2##3|{\def\xa{##1}\def\xb{##2}\def\xc{##3}}%
\next|#3|%
\def\next<##1,##2>{\deltax=##1\deltay=##2\ignorespaces}%
\next<#7>%
\def\next[##1`##2`##3;##4`##5`##6]{%
    \def\nodea{##1}\def\nodeb{##2}\def\nodec{##3}%
    \def\labela{##4}\def\labelb{##5}\def\labelc{##6}}%
\next[#8]%
\advance\ypos by \deltay%
\morphism(\xpos,\ypos)|\xa|/{#4}/<\deltax,0>[\nodea`\nodeb;\labela]%
\morphism(\xpos,\ypos)|\xb|/{#5}/<\deltax,-\deltay>[\nodea`\nodec;\labelb]%
\advance\xpos by \deltax%
\morphism(\xpos,\ypos)|\xc|/{#6}/<0,-\deltay>[\nodeb`\nodec;\labelc]%
\ignorespaces}%
\def\dtrianglepppp(#1,#2)|#3|/#4`#5`#6/<#7>[#8]{%
\xpos#1\ypos#2%
\def\next|##1##2##3|{\def\xa{##1}\def\xb{##2}\def\xc{##3}}%
\next|#3|%
\def\next<##1,##2>{\deltax=##1\deltay=##2\ignorespaces}%
\next<#7>%
\def\next[##1`##2`##3;##4`##5`##6]{%
    \def\nodea{##1}\def\nodeb{##2}\def\nodec{##3}%
    \def\labela{##4}\def\labelb{##5}\def\labelc{##6}}%
\next[#8]%
\morphism(\xpos,\ypos)|\xc|/{#6}/<\deltax,0>[\nodeb`\nodec;\labelc]%
\advance\ypos by \deltay\advance \xpos by \deltax%
\morphism(\xpos,\ypos)|\xa|/{#4}/<-\deltax,-\deltay>[\nodea`\nodeb;\labela]%
\morphism(\xpos,\ypos)|\xb|/{#5}/<0,-\deltay>[\nodea`\nodec;\labelb]%
\ignorespaces}%
\def\btrianglepppp(#1,#2)|#3|/#4`#5`#6/<#7>[#8]{%
\xpos#1\ypos#2%
\def\next|##1##2##3|{\def\xa{##1}\def\xb{##2}\def\xc{##3}}%
\next|#3|%
\def\next<##1,##2>{\deltax=##1\deltay=##2\ignorespaces}%
\next<#7>%
\def\next[##1`##2`##3;##4`##5`##6]{%
    \def\nodea{##1}\def\nodeb{##2}\def\nodec{##3}%
    \def\labela{##4}\def\labelb{##5}\def\labelc{##6}}%
\next[#8]%
\morphism(\xpos,\ypos)|\xc|/{#6}/<\deltax,0>[\nodeb`\nodec;\labelc]%
\advance\ypos by \deltay%
\morphism(\xpos,\ypos)|\xa|/{#4}/<0,-\deltay>[\nodea`\nodeb;\labela]%
\morphism(\xpos,\ypos)|\xb|/{#5}/<\deltax,-\deltay>[\nodea`\nodec;\labelb]%
\ignorespaces}%
\def\Atrianglepppp(#1,#2)|#3|/#4`#5`#6/<#7>[#8]{%
\xpos#1\ypos#2%
\def\next|##1##2##3|{\def\xa{##1}\def\xb{##2}\def\xc{##3}}%
\next|#3|%
\def\next<##1,##2>{\deltax=##1\deltay=##2\ignorespaces}%
\next<#7>%
\def\next[##1`##2`##3;##4`##5`##6]{%
    \def\nodea{##1}\def\nodeb{##2}\def\nodec{##3}%
    \def\labela{##4}\def\labelb{##5}\def\labelc{##6}}%
\next[#8]%
\multiply\deltax by 2%
\morphism(\xpos,\ypos)|\xc|/{#6}/<\deltax,0>[\nodeb`\nodec;\labelc]%
\divide\deltax by 2%
\advance\ypos by \deltay\advance\xpos by \deltax%
\morphism(\xpos,\ypos)|\xa|/{#4}/<-\deltax,-\deltay>[\nodea`\nodeb;\labela]%
\morphism(\xpos,\ypos)|\xb|/{#5}/<\deltax,-\deltay>[\nodea`\nodec;\labelb]%
\ignorespaces}%
\def\Vtrianglepppp(#1,#2)|#3|/#4`#5`#6/<#7>[#8]{%
\xpos#1\ypos#2%
\def\next|##1##2##3|{\def\xa{##1}\def\xb{##2}\def\xc{##3}}%
\next|#3|%
\def\next<##1,##2>{\deltax=##1\deltay=##2\ignorespaces}%
\next<#7>%
\def\next[##1`##2`##3;##4`##5`##6]{%
    \def\nodea{##1}\def\nodeb{##2}\def\nodec{##3}%
    \def\labela{##4}\def\labelb{##5}\def\labelc{##6}}%
\next[#8]%
\advance\ypos by \deltay%
\morphism(\xpos,\ypos)|\xb|/{#5}/<\deltax,-\deltay>[\nodea`\nodec;\labelb]%
\multiply\deltax by 2%
\morphism(\xpos,\ypos)|\xa|/{#4}/<\deltax,0>[\nodea`\nodeb;\labela]%
\advance\xpos by \deltax \divide \deltax by 2%
\morphism(\xpos,\ypos)|\xc|/{#6}/<-\deltax,-\deltay>[\nodeb`\nodec;\labelc]%
\ignorespaces}%
\def\Ctrianglepppp(#1,#2)|#3|/#4`#5`#6/<#7>[#8]{%
\xpos#1\ypos#2%
\def\next|##1##2##3|{\def\xa{##1}\def\xb{##2}\def\xc{##3}}%
\next|#3|%
\def\next<##1,##2>{\deltax=##1\deltay=##2\ignorespaces}%
\next<#7>%
\def\next[##1`##2`##3;##4`##5`##6]{%
    \def\nodea{##1}\def\nodeb{##2}\def\nodec{##3}%
    \def\labela{##4}\def\labelb{##5}\def\labelc{##6}}%
\next[#8]%
\advance \ypos by \deltay%
\morphism(\xpos,\ypos)|\xc|/{#6}/<\deltax,-\deltay>[\nodeb`\nodec;\labelc]%
\advance\ypos by \deltay \advance \xpos by \deltax%
\morphism(\xpos,\ypos)|\xa|/{#4}/<-\deltax,-\deltay>[\nodea`\nodeb;\labela]%
\multiply\deltay by 2%
\morphism(\xpos,\ypos)|\xb|/{#5}/<0,-\deltay>[\nodea`\nodec;\labelb]%
\ignorespaces}%
\def\Dtrianglepppp(#1,#2)|#3|/#4`#5`#6/<#7>[#8]{%
\xpos#1\ypos#2%
\def\next|##1##2##3|{\def\xa{##1}\def\xb{##2}\def\xc{##3}}%
\next|#3|%
\def\next<##1,##2>{\deltax=##1\deltay=##2\ignorespaces}%
\next<#7>%
\def\next[##1`##2`##3;##4`##5`##6]{%
    \def\nodea{##1}\def\nodeb{##2}\def\nodec{##3}%
    \def\labela{##4}\def\labelb{##5}\def\labelc{##6}}%
\next[#8]%
\advance\xpos by \deltax \advance\ypos by \deltay%
\morphism(\xpos,\ypos)|\xc|/{#6}/<-\deltax,-\deltay>[\nodeb`\nodec;\labelc]%
\advance\xpos by -\deltax \advance\ypos by \deltay%
\morphism(\xpos,\ypos)|\xb|/{#5}/<\deltax,-\deltay>[\nodea`\nodeb;\labelb]%
\multiply \deltay by 2%
\morphism(\xpos,\ypos)|\xa|/{#4}/<0,-\deltay>[\nodea`\nodec;\labela]%
\ignorespaces}%
\def\ptrianglep(#1){\ifnextchar|{\ptrianglepp(#1)}{\ptrianglepp(#1)|alr|}}%
\def\ptrianglepp(#1)|#2|{\ifnextchar/{\ptriangleppp(#1)|#2|}%
    {\ptriangleppp(#1)|#2|/>`>`>/}}%
\def\ptriangleppp(#1)|#2|/#3`#4`#5/{%
    \ifnextchar<{\ptrianglepppp(#1)|#2|/#3`#4`#5/}%
    {\ptrianglepppp(#1)|#2|/#3`#4`#5/<\default,\default>}}%
\def\qtrianglep(#1){\ifnextchar|{\qtrianglepp(#1)}{\qtrianglepp(#1)|alr|}}%
\def\qtrianglepp(#1)|#2|{\ifnextchar/{\qtriangleppp(#1)|#2|}%
    {\qtriangleppp(#1)|#2|/>`>`>/}}%
\def\qtriangleppp(#1)|#2|/#3`#4`#5/{%
    \ifnextchar<{\qtrianglepppp(#1)|#2|/#3`#4`#5/}%
    {\qtrianglepppp(#1)|#2|/#3`#4`#5/<\default,\default>}}%
\def\dtrianglep(#1){\ifnextchar|{\dtrianglepp(#1)}{\dtrianglepp(#1)|lrb|}}%
\def\dtrianglepp(#1)|#2|{\ifnextchar/{\dtriangleppp(#1)|#2|}%
    {\dtriangleppp(#1)|#2|/>`>`>/}}%
\def\dtriangleppp(#1)|#2|/#3`#4`#5/{%
    \ifnextchar<{\dtrianglepppp(#1)|#2|/#3`#4`#5/}%
    {\dtrianglepppp(#1)|#2|/#3`#4`#5/<\default,\default>}}%
\def\btrianglep(#1){\ifnextchar|{\btrianglepp(#1)}{\btrianglepp(#1)|lrb|}}%
\def\btrianglepp(#1)|#2|{\ifnextchar/{\btriangleppp(#1)|#2|}%
    {\btriangleppp(#1)|#2|/>`>`>/}}%
\def\btriangleppp(#1)|#2|/#3`#4`#5/{%
    \ifnextchar<{\btrianglepppp(#1)|#2|/#3`#4`#5/}%
    {\btrianglepppp(#1)|#2|/#3`#4`#5/<\default,\default>}}%
\def\Atrianglep(#1){\ifnextchar|{\Atrianglepp(#1)}{\Atrianglepp(#1)|lrb|}}%
\def\Atrianglepp(#1)|#2|{\ifnextchar/{\Atriangleppp(#1)|#2|}%
    {\Atriangleppp(#1)|#2|/>`>`>/}}%
\def\Atriangleppp(#1)|#2|/#3`#4`#5/{%
    \ifnextchar<{\Atrianglepppp(#1)|#2|/#3`#4`#5/}%
    {\Atrianglepppp(#1)|#2|/#3`#4`#5/<\default,\default>}}%
\def\Vtrianglep(#1){\ifnextchar|{\Vtrianglepp(#1)}{\Vtrianglepp(#1)|alb|}}%
\def\Vtrianglepp(#1)|#2|{\ifnextchar/{\Vtriangleppp(#1)|#2|}%
    {\Vtriangleppp(#1)|#2|/>`>`>/}}%
\def\Vtriangleppp(#1)|#2|/#3`#4`#5/{%
    \ifnextchar<{\Vtrianglepppp(#1)|#2|/#3`#4`#5/}%
    {\Vtrianglepppp(#1)|#2|/#3`#4`#5/<\default,\default>}}%
\def\Ctrianglep(#1){\ifnextchar|{\Ctrianglepp(#1)}{\Ctrianglepp(#1)|arb|}}%
\def\Ctrianglepp(#1)|#2|{\ifnextchar/{\Ctriangleppp(#1)|#2|}%
    {\Ctriangleppp(#1)|#2|/>`>`>/}}%
\def\Ctriangleppp(#1)|#2|/#3`#4`#5/{%
    \ifnextchar<{\Ctrianglepppp(#1)|#2|/#3`#4`#5/}%
    {\Ctrianglepppp(#1)|#2|/#3`#4`#5/<\default,\default>}}%
\def\Dtrianglep(#1){\ifnextchar|{\Dtrianglepp(#1)}{\Dtrianglepp(#1)|alb|}}%
\def\Dtrianglepp(#1)|#2|{\ifnextchar/{\Dtriangleppp(#1)|#2|}%
    {\Dtriangleppp(#1)|#2|/>`>`>/}}%
\def\Dtriangleppp(#1)|#2|/#3`#4`#5/{%
    \ifnextchar<{\Dtrianglepppp(#1)|#2|/#3`#4`#5/}%
    {\Dtrianglepppp(#1)|#2|/#3`#4`#5/<\default,\default>}}%
\def\Atrianglepairpppp(#1)|#2|/#3`#4`#5`#6`#7/<#8>[#9]{%
\def\next(##1,##2){\xpos##1\ypos##2}%
\next(#1)%
\def\next|##1##2##3##4##5|{\def\xa{##1}\def\xb{##2}%
\def\xc{##3}\def\xd{##4}\def\xe{##5}}%
\next|#2|%
\def\next<##1,##2>{\deltax=##1\deltay=##2\ignorespaces}%
\next<#8>%
\def\next[##1`##2`##3`##4;##5`##6`##7`##8`##9]{%
 \def\nodea{##1}\def\nodeb{##2}\def\nodec{##3}\def\noded{##4}%
 \def\labela{##5}\def\labelb{##6}\def\labelc{##7}\def\labeld{##8}\def\labele{##9}}%
\next[#9]%
\morphism(\xpos,\ypos)|\xd|/{#6}/<\deltax,0>[\nodeb`\nodec;\labeld]%
\advance\xpos by \deltax%
\morphism(\xpos,\ypos)|\xe|/{#7}/<\deltax,0>[\nodec`\noded;\labele]%
\advance\ypos by \deltay%
\morphism(\xpos,\ypos)|\xa|/{#3}/<-\deltax,-\deltay>[\nodea`\nodeb;\labela]%
\morphism(\xpos,\ypos)|\xb|/{#4}/<0,-\deltay>[\nodea`\nodec;\labelb]%
\morphism(\xpos,\ypos)|\xc|/{#5}/<\deltax,-\deltay>[\nodea`\noded;\labelc]%
\ignorespaces}%
\def\Vtrianglepairpppp(#1)|#2|/#3`#4`#5`#6`#7/<#8>[#9]{%
\def\next(##1,##2){\xpos##1\ypos##2}%
\next(#1)%
\def\next|##1##2##3##4##5|{\def\xa{##1}\def\xb{##2}%
\def\xc{##3}\def\xd{##4}\def\xe{##5}}%
\next|#2|%
\def\next<##1,##2>{\deltax=##1\deltay=##2\ignorespaces}%
\next<#8>%
\def\next[##1`##2`##3`##4;##5`##6`##7`##8`##9]{%
 \def\nodea{##1}\def\nodeb{##2}\def\nodec{##3}\def\noded{##4}%
 \def\labela{##5}\def\labelb{##6}\def\labelc{##7}\def\labeld{##8}\def\labele{##9}}%
\next[#9]%
\advance\ypos by \deltay%
\morphism(\xpos,\ypos)|\xa|/{#3}/<\deltax,0>[\nodea`\nodeb;\labela]%
\morphism(\xpos,\ypos)|\xc|/{#5}/<\deltax,-\deltay>[\nodea`\noded;\labelc]%
\advance\xpos by \deltax%
\morphism(\xpos,\ypos)|\xb|/{#4}/<\deltax,0>[\nodeb`\nodec;\labelb]%
\morphism(\xpos,\ypos)|\xd|/{#6}/<0,-\deltay>[\nodeb`\noded;\labeld]%
\advance\xpos by \deltax%
\morphism(\xpos,\ypos)|\xe|/{#7}/<-\deltax,-\deltay>[\nodec`\noded;\labele]%
\ignorespaces}%
\def\Ctrianglepairpppp(#1)|#2|/#3`#4`#5`#6`#7/<#8>[#9]{%
\def\next(##1,##2){\xpos##1\ypos##2}%
\next(#1)%
\def\next|##1##2##3##4##5|{\def\xa{##1}\def\xb{##2}%
\def\xc{##3}\def\xd{##4}\def\xe{##5}}%
\next|#2|%
\def\next<##1,##2>{\deltax=##1\deltay=##2\ignorespaces}%
\next<#8>%
\def\next[##1`##2`##3`##4;##5`##6`##7`##8`##9]{%
 \def\nodea{##1}\def\nodeb{##2}\def\nodec{##3}\def\noded{##4}%
 \def\labela{##5}\def\labelb{##6}\def\labelc{##7}\def\labeld{##8}\def\labele{##9}}%
\next[#9]%
\advance\ypos by \deltay%
\morphism(\xpos,\ypos)|\xe|/{#7}/<0,-\deltay>[\nodec`\noded;\labele]%
\advance\xpos by -\deltax%
\morphism(\xpos,\ypos)|\xc|/{#5}/<\deltax,0>[\nodeb`\nodec;\labelc]%
\morphism(\xpos,\ypos)|\xd|/{#6}/<\deltax,-\deltay>[\nodeb`\noded;\labeld]%
\advance\ypos by \deltay%
\advance\xpos by \deltax%
\morphism(\xpos,\ypos)|\xa|/{#3}/<-\deltax,-\deltay>[\nodea`\nodeb;\labela]%
\morphism(\xpos,\ypos)|\xb|/{#4}/<0,-\deltay>[\nodea`\nodec;\labelb]%
\ignorespaces}%
\def\Dtrianglepairpppp(#1)|#2|/#3`#4`#5`#6`#7/<#8>[#9]{%
\def\next(##1,##2){\xpos##1\ypos##2}%
\next(#1)%
\def\next|##1##2##3##4##5|{\def\xa{##1}\def\xb{##2}%
\def\xc{##3}\def\xd{##4}\def\xe{##5}}%
\next|#2|%
\def\next<##1,##2>{\deltax=##1\deltay=##2\ignorespaces}%
\next<#8>%
\def\next[##1`##2`##3`##4;##5`##6`##7`##8`##9]{%
 \def\nodea{##1}\def\nodeb{##2}\def\nodec{##3}\def\noded{##4}%
 \def\labela{##5}\def\labelb{##6}\def\labelc{##7}\def\labeld{##8}\def\labele{##9}}%
\next[#9]%
\advance\ypos by \deltay%
\morphism(\xpos,\ypos)|\xc|/{#5}/<\deltax,0>[\nodeb`\nodec;\labelc]%
\morphism(\xpos,\ypos)|\xd|/{#6}/<0,-\deltay>[\nodeb`\noded;\labeld]%
\advance\ypos by \deltay%
\morphism(\xpos,\ypos)|\xa|/{#3}/<0,-\deltay>[\nodea`\nodeb;\labela]%
\morphism(\xpos,\ypos)|\xb|/{#4}/<\deltax,-\deltay>[\nodea`\nodec;\labelb]%
\advance\ypos by -\deltay%
\advance\xpos by \deltax%
\morphism(\xpos,\ypos)|\xe|/{#7}/<-\deltax,-\deltay>[\nodec`\noded;\labele]%
\ignorespaces}%
\def\Atrianglepairp(#1){\ifnextchar|{\Atrianglepairpp(#1)}%
{\Atrianglepairpp(#1)|lmrbb|}}%
\def\Atrianglepairpp(#1)|#2|{\ifnextchar/{\Atrianglepairppp(#1)|#2|}%
    {\Atrianglepairppp(#1)|#2|/>`>`>`>`>/}}%
\def\Atrianglepairppp(#1)|#2|/#3`#4`#5`#6`#7/{%
    \ifnextchar<{\Atrianglepairpppp(#1)|#2|/#3`#4`#5`#6`#7/}%
    {\Atrianglepairpppp(#1)|#2|/#3`#4`#5`#6`#7/<\default,\default>}}%
\def\Vtrianglepairp(#1){\ifnextchar|{\Vtrianglepairpp(#1)}%
{\Vtrianglepairpp(#1)|aalmr|}}%
\def\Vtrianglepairpp(#1)|#2|{\ifnextchar/{\Vtrianglepairppp(#1)|#2|}%
    {\Vtrianglepairppp(#1)|#2|/>`>`>`>`>/}}%
\def\Vtrianglepairppp(#1)|#2|/#3`#4`#5`#6`#7/{%
    \ifnextchar<{\Vtrianglepairpppp(#1)|#2|/#3`#4`#5`#6`#7/}%
    {\Vtrianglepairpppp(#1)|#2|/#3`#4`#5`#6`#7/<\default,\default>}}%
\def\Ctrianglepairp(#1){\ifnextchar|{\Ctrianglepairpp(#1)}%
{\Ctrianglepairpp(#1)|lrmlr|}}%
\def\Ctrianglepairpp(#1)|#2|{\ifnextchar/{\Ctrianglepairppp(#1)|#2|}%
    {\Ctrianglepairppp(#1)|#2|/>`>`>`>`>/}}%
\def\Ctrianglepairppp(#1)|#2|/#3`#4`#5`#6`#7/{%
    \ifnextchar<{\Ctrianglepairpppp(#1)|#2|/#3`#4`#5`#6`#7/}%
    {\Ctrianglepairpppp(#1)|#2|/#3`#4`#5`#6`#7/<\default,\default>}}%
\def\Dtrianglepairp(#1){\ifnextchar|{\Dtrianglepairpp(#1)}%
{\Dtrianglepairpp(#1)|lrmlr|}}%
\def\Dtrianglepairpp(#1)|#2|{\ifnextchar/{\Dtrianglepairppp(#1)|#2|}%
    {\Dtrianglepairppp(#1)|#2|/>`>`>`>`>/}}%
\def\Dtrianglepairppp(#1)|#2|/#3`#4`#5`#6`#7/{%
    \ifnextchar<{\Dtrianglepairpppp(#1)|#2|/#3`#4`#5`#6`#7/}%
    {\Dtrianglepairpppp(#1)|#2|/#3`#4`#5`#6`#7/<\default,\default>}}%
\def\pplace[#1](#2,#3)[#4]{\POS(#2,#3)*+!!<0ex,\axis>!#1{#4}\ignorespaces}%
\def\cplace(#1,#2)[#3]{\POS(#1,#2)*+!!<0ex,\axis>{#3}\ignorespaces}%
\def\pullback#1]#2]{\square#1]\trident#2]\ignorespaces}%
\def\tridentppp|#1#2#3|/#4`#5`#6/<#7,#8>[#9]{%
\def\next[##1;##2`##3`##4]{\def\nodee{##1}\def\labele{##2}%
   \def\labelf{##3}\def\labelg{##4}}%
\next[#9]%
\advance \xpos by -\deltax%
\advance \xpos by -#7\advance \ypos by #8%
\advance\deltax by #7%
\morphism(\xpos,\ypos)|#1|/{#4}/<\deltax,-#8>[\nodee`\nodeb;\labele]%
\advance\deltax by -#7%
\morphism(\xpos,\ypos)|#2|/{#5}/<#7,-#8>[\nodee`\nodea;\labelf]%
\advance\deltay by #8%
\morphism(\xpos,\ypos)|#3|/{#6}/<#7,-\deltay>[\nodee`\nodec;\labelg]%
\ignorespaces}%
\def\trident{\ifnextchar|{\tridentp}{\tridentp|amb|}}%
\def\tridentp|#1|{\ifnextchar/{\tridentpp|#1|}{\tridentpp|#1|/{>}`{>}`{>}/}}%
\def\tridentpp|#1|/#2/{\ifnextchar<{\tridentppp|#1|/#2/}%
  {\tridentppp|#1|/#2/<500,500>}}%
\def\setmorphismwidth#1#2#3#4{%
 \setbox0=\hbox{$#1{\labelstyle#3#3}#2$}#4=\wd0%
 \divide #4 by 2 \divide #4 by \ul%
 \advance #4 by 350 \ratchet{#4}{500}}%
\def\setSquarewidth[#1`#2`#3`#4;#5`#6`#7`#8]{%
 \setmorphismwidth{#1}{#2}{#5}{\topw}%
 \setmorphismwidth{#3}{#4}{#8}{\botw}%
\ratchet{\topw}{\botw}}%
\def\Squarepppp(#1)|#2|/#3/<#4>[#5]{%
 \setSquarewidth[#5]%
 \squarepppp(#1)|#2|/#3/<\topw,#4>[#5]%
\ignorespaces}%
\def\Squarep(#1){\ifnextchar|{\Squarepp(#1)}{\Squarepp(#1)|alrb|}}%
\def\Squarepp(#1)|#2|{\ifnextchar/{\Squareppp(#1)|#2|}%
    {\Squareppp(#1)|#2|/>`>`>`>/}}%
\def\Squareppp(#1)|#2|/#3`#4`#5`#6/{%
    \ifnextchar<{\Squarepppp(#1)|#2|/#3`#4`#5`#6/}%
    {\Squarepppp(#1)|#2|/#3`#4`#5`#6/<\default>}}%
\def\hsquarespppp(#1,#2)|#3|/#4/<#5>[#6;#7]{%
\Xpos=#1\Ypos=#2%
\def\next|##1##2##3##4##5##6##7|{%
 \def\Xa{##1}\def\Xb{##2}\def\Xc{##3}\def\Xd{##4}%
 \def\Xe{##5}\def\Xf{##6}\def\Xg{##7}}%
\next|#3|%
\def\next<##1,##2,##3>{\deltaX=##1 \deltaXprime=##2 \deltaY=##3}%
\next<#5>%
\def\next[##1`##2`##3`##4`##5`##6]{%
 \def\Nodea{##1}\def\Nodeb{##2}\def\Nodec{##3}%
 \def\Noded{##4}\def\Nodee{##5}\def\Nodef{##6}}%
\next[#6]%
\def\next[##1`##2`##3`##4`##5`##6`##7]{%
 \def\Labela{##1}\def\Labelb{##2}\def\Labelc{##3}\def\Labeld{##4}%
 \def\Labele{##5}\def\Labelf{##6}\def\Labelg{##7}}%
\next[#7]%
\dohsquares/#4/}%
\def\dohsquares/#1`#2`#3`#4`#5`#6`#7/{%
\squarepppp(\Xpos,\Ypos)|\Xa\Xc\Xd\Xf|/#1`#3`#4`#6/<\deltaX,\deltaY>%
 [\Nodea`\Nodeb`\Noded`\Nodee;\Labela`\Labelc`\Labeld`\Labelf]%
 \advance \Xpos by \deltaX%
\squarepppp(\Xpos,\Ypos)|\Xb\Xd\Xe\Xg|/#2``#5`#7/<\deltaXprime,\deltaY>%
[\Nodeb`\Nodec`\Nodee`\Nodef;\Labelb``\Labele`\Labelg]%
\ignorespaces}%
\def\hsquaresp(#1){\ifnextchar|{\hsquarespp(#1)}{\hsquarespp%
(#1)|aalmrbb|}}%
\def\hsquarespp(#1)|#2|{\ifnextchar/{\hsquaresppp(#1)|#2|}%
    {\hsquaresppp(#1)|#2|/>`>`>`>`>`>`>/}}%
\def\hsquaresppp(#1)|#2|/#3/{%
    \ifnextchar<{\hsquarespppp(#1)|#2|/#3/}%
    {\hsquarespppp(#1)|#2|/#3/<\default,\default,\default>}}%
\def\hSquarespppp(#1,#2)|#3|/#4/<#5>[#6;#7]{%
\Xpos=#1\Ypos=#2%
\def\next|##1##2##3##4##5##6##7|{%
 \def\Xa{##1}\def\Xb{##2}\def\Xc{##3}\def\Xd{##4}%
 \def\Xe{##5}\def\Xf{##6}\def\Xg{##7}}%
\next|#3|%
\deltaY=#5%
\def\next[##1`##2`##3`##4`##5`##6]{%
 \def\Nodea{##1}\def\Nodeb{##2}\def\Nodec{##3}%
 \def\Noded{##4}\def\Nodee{##5}\def\Nodef{##6}}%
\next[#6]%
\def\next[##1`##2`##3`##4`##5`##6`##7]{%
 \def\Labela{##1}\def\Labelb{##2}\def\Labelc{##3}\def\Labeld{##4}%
 \def\Labele{##5}\def\Labelf{##6}\def\Labelg{##7}}%
\next[#7]%
\dohSquares/#4/}%
\def\dohSquares/#1`#2`#3`#4`#5`#6`#7/{%
\Squarepppp(\Xpos,\Ypos)|\Xa\Xc\Xd\Xf|/#1`#3`#4`#6/<\deltaY>%
 [\Nodea`\Nodeb`\Noded`\Nodee;\Labela`\Labelc`\Labeld`\Labelf]%
 \advance \Xpos by \topw%
\Squarepppp(\Xpos,\Ypos)|\Xb\Xd\Xe\Xg|/#2``#5`#7/<\deltaY>%
[\Nodeb`\Nodec`\Nodee`\Nodef;\Labelb``\Labele`\Labelg]%
\ignorespaces}%
\def\hSquaresp(#1){\ifnextchar|{\hSquarespp(#1)}{\hSquarespp%
(#1)|aalmrbb|}}%
\def\hSquarespp(#1)|#2|{\ifnextchar/{\hSquaresppp(#1)|#2|}%
    {\hSquaresppp(#1)|#2|/>`>`>`>`>`>`>/}}%
\def\hSquaresppp(#1)|#2|/#3/{%
    \ifnextchar<{\hSquarespppp(#1)|#2|/#3/}%
    {\hSquarespppp(#1)|#2|/#3/<\default>}}%
\def\vSquarespppp(#1,#2)|#3|/#4/<#5,#6>[#7;#8]{%
\Xpos=#1\Ypos=#2%
\def\next|##1##2##3##4##5##6##7|{%
 \def\Xa{##1}\def\Xb{##2}\def\Xc{##3}\def\Xd{##4}%
 \def\Xe{##5}\def\Xf{##6}\def\Xg{##7}}%
\next|#3|%
\deltaX=#5%
\deltaY=#6%
\def\next[##1`##2`##3`##4`##5`##6]{%
 \def\Nodea{##1}\def\Nodeb{##2}\def\Nodec{##3}%
 \def\Noded{##4}\def\Nodee{##5}\def\Nodef{##6}}%
\next[#7]%
\def\next[##1`##2`##3`##4`##5`##6`##7]{%
 \def\Labela{##1}\def\Labelb{##2}\def\Labelc{##3}\def\Labeld{##4}%
 \def\Labele{##5}\def\Labelf{##6}\def\Labelg{##7}}%
\next[#8]%
\dovSquares/#4/\ignorespaces}%
\def\dovSquares/#1`#2`#3`#4`#5`#6`#7/{%
\setmorphismwidth{\Nodea}{\Nodeb}{\Labela}{\topw}%
\setmorphismwidth{\Nodec}{\Noded}{\Labeld}{\botw}%
\ratchet{\topw}{\botw}%
\setmorphismwidth{\Nodee}{\Nodef}{\Labelg}{\botw}%
\ratchet{\topw}{\botw}%
\square(\Xpos,\Ypos)|\Xd\Xe\Xf\Xg|/`#5`#6`#7/<\topw,\deltaX>%
 [\Nodec`\Noded`\Nodee`\Nodef;`\Labele`\Labelf`\Labelg]%
\advance \Ypos by \deltaX%
\square(\Xpos,\Ypos)|\Xa\Xb\Xc\Xd|/#1`#2`#3`#4/<\topw,\deltaY>%
 [\Nodea`\Nodeb`\Nodec`\Noded;\Labela`\Labelb`\Labelc`\Labeld]%
}%
\def\vSquaresp(#1){\ifnextchar|{\vSquarespp(#1)}{\vSquarespp%
(#1)|alrmlrb|}}%
\def\vSquarespp(#1)|#2|{\ifnextchar/{\vSquaresppp(#1)|#2|}%
    {\vSquaresppp(#1)|#2|/>`>`>`>`>`>`>/}}%
\def\vSquaresppp(#1)|#2|/#3/{%
    \ifnextchar<{\vSquarespppp(#1)|#2|/#3/}%
    {\vSquarespppp(#1)|#2|/#3/<\default,\default>}}%
\def\osquarepppp(#1)|#2|/#3`#4`#5`#6/<#7>[#8]{\squarepppp%
 (#1)|#2|/#3`#4`#5`#6/<#7>[#8]%
 \let\Nodea\nodea\let\Nodeb\nodeb%
\let\Nodec\nodec\let\Noded\noded\Xpos=\xpos\Ypos=\ypos%
\deltaX=\deltax \deltaY=\deltay \isquare}%
\def\osquarep(#1){\ifnextchar|{\osquarepp(#1)}{\osquarepp(#1)|alrb|}}%
\def\osquarepp(#1)|#2|{\ifnextchar/{\osquareppp(#1)|#2|}%
    {\osquareppp(#1)|#2|/>`>`>`>/}}%
\def\osquareppp(#1)|#2|/#3`#4`#5`#6/{%
    \ifnextchar<{\osquarepppp(#1)|#2|/#3`#4`#5`#6/}%
    {\osquarepppp(#1)|#2|/#3`#4`#5`#6/<1500,1500>}}%
\def\isquarepppp(#1)|#2|/#3`#4`#5`#6/<#7>[#8]{%
 \squarepppp(#1)|#2|/#3`#4`#5`#6/<#7>[#8]%
\ifnextchar|{\cubep}{\cubep|mmmm|}}%
\def\cubep|#1|{\ifnextchar/{\cubepp|#1|}{\cubepp|#1|/>`>`>`>/}}%
\def\isquare{\ifnextchar({\isquarep}{\isquarep(\default,\default)}}%
\def\isquarep(#1){\ifnextchar|{\isquarepp(#1)}{\isquarepp(#1)|alrb|}}%
\def\isquarepp(#1)|#2|{\ifnextchar/{\isquareppp(#1)|#2|}%
    {\isquareppp(#1)|#2|/>`>`>`>/}}%
\def\isquareppp(#1)|#2|/#3`#4`#5`#6/{%
    \ifnextchar<{\isquarepppp(#1)|#2|/#3`#4`#5`#6/}%
    {\isquarepppp(#1)|#2|/#3`#4`#5`#6/<500,500>}}%
\def\cubepp|#1#2#3#4|/#5`#6`#7`#8/[#9]{%
\def\next[##1`##2`##3`##4]{\gdef\Labela{##1}%
\gdef\Labelb{##2}\gdef\Labelc{##3}\gdef\Labeld{##4}}\next[#9]%
\xend\xpos \yend\ypos%
\Xend\xend\advance\Xend by -\Xpos%
\Yend\yend\advance\Yend by -\Ypos%
\domorphism(\Xpos,\Ypos)|#2|/#6/<\Xend,\Yend>[\Nodeb`\nodeb;\Labelb]%
\advance\Xpos by-\deltaX%
\advance\xend by-\deltax%
\Xend\xend\advance\Xend by -\Xpos%
\domorphism(\Xpos,\Ypos)|#1|/#5/<\Xend,\Yend>[\Nodea`\nodea;\Labela]%
\advance\Ypos by-\deltaY%
\advance\yend by-\deltay%
\Yend\yend\advance\Yend by -\Ypos%
\domorphism(\Xpos,\Ypos)|#3|/#7/<\Xend,\Yend>[\Nodec`\nodec;\Labelc]%
\advance\Xpos by\deltaX%
\advance\xend by\deltax%
\Xend\xend\advance\Xend by -\Xpos%
\domorphism(\Xpos,\Ypos)|#4|/#8/<\Xend,\Yend>[\Noded`\noded;\Labeld]%
\ignorespaces}%
\def\setwdth#1#2{\setbox0\hbox{$\labelstyle#1$}\wdth=\wd0%
\setbox0\hbox{$\labelstyle#2$}\ifnum\wdth<\wd0 \wdth=\wd0 \fi}%
\def\topppp/#1/<#2>^#3_#4{\:%
\ifnum#2=0%
   \setwdth{#3}{#4}\deltax=\wdth \divide \deltax by \ul%
   \advance \deltax by \defaultmargin  \ratchet{\deltax}{200}%
\else \deltax #2%
\fi%
\xy\ar@{#1}^{#3}_{#4}(\deltax,0) \endxy%
\:}%
\def\toppp/#1/<#2>^#3{\ifnextchar_{\topppp/#1/<#2>^{#3}}{\topppp/#1/<#2>^{#3}_{}}}%
\def\topp/#1/<#2>{\ifnextchar^{\toppp/#1/<#2>}{\toppp/#1/<#2>^{}}}%
\def\toop/#1/{\ifnextchar<{\topp/#1/}{\topp/#1/<0>}}%
\def\to{\ifnextchar/{\toop}{\toop/>/}}%
\def\twopppp/#1`#2/<#3>^#4_#5{\:%
\ifnum0=#3%
  \setwdth{#4}{#5}\deltax=\wdth \divide \deltax by \ul \advance \deltax%
  by \defaultmargin \ratchet{\deltax}{200}%
\else \deltax#3 \fi%
\xy\ar@{#1}@<2.5pt>^{#4}(\deltax,0)%
\ar@{#2}@<-2.5pt>_{#5}(\deltax,0)\endxy\:}%
\def\twoppp/#1`#2/<#3>^#4{\ifnextchar_{\twopppp/#1`#2/<#3>^{#4}}%
  {\twopppp/#1`#2/<#3>^{#4}_{}}}%
\def\twopp/#1`#2/<#3>{\ifnextchar^{\twoppp/#1`#2/<#3>}{\twoppp/#1`#2/<#3>^{}}}%
\def\twop/#1`#2/{\ifnextchar<{\twopp/#1`#2/}{\twopp/#1`#2/<0>}}%
\def\threeppppp/#1`#2`#3/<#4>^#5|#6_#7{\:%
\ifnum0=#4%
\setbox0\hbox{$\labelstyle#5$}\wdth=\wd0%
\setbox0\hbox{$\labelstyle#6$}\ifnum\wdth<\wd0 \wdth=\wd0 \fi%
\setbox0\hbox{$\labelstyle#7$}\ifnum\wdth<\wd0 \wdth=\wd0 \fi%
\deltax=\wdth \divide \deltax by \ul \advance \deltax by%
\defaultmargin \ratchet{\deltax}{300}%
\else\deltax#4 \fi%
    \xy \ifnum\wd0=0 \ar@{#2}(\deltax,0)%
    \else \ar@{#2}|{#6}(\deltax,0)\fi%
\ar@{#1}@<4.5pt>^{#5}(\deltax,0)%
\ar@{#3}@<-4.5pt>_{#7}(\deltax,0)\endxy\:}%
\def\threepppp/#1`#2`#3/<#4>^#5|#6{\ifnextchar_{\threeppppp%
  /#1`#2`#3/<#4>^{#5}|{#6}}{\threeppppp/#1`#2`#3/<#4>^{#5}|{#6}_{}}}%
\def\threeppp/#1`#2`#3/<#4>^#5{\ifnextchar|{\threepppp%
  /#1`#2`#3/<#4>^{#5}}{\threepppp/#1`#2`#3/<#4>^{#5}|{}}}%
\def\threepp/#1`#2`#3/<#4>{\ifnextchar^{\threeppp/#1`#2`#3/<#4>}%
  {\threeppp/#1`#2`#3/<#4>^{}}}%
\def\threep/#1`#2`#3/{\ifnextchar<{\threepp/#1`#2`#3/}%
  {\threepp/#1`#2`#3/<0>}}%
\def\twoar(#1,#2){{%
 \scalefactor{0.1}%
 \deltax#1\deltay#2%
 \deltaX=\ifnum\deltax<0-\fi\deltax%
 \deltaY=\ifnum\deltay<0-\fi\deltay%
 \Xend\deltax \multiply \Xend by \deltax%
 \Yend\deltay \multiply \Yend by \deltay%
 \advance\Xend by \Yend \multiply \Xend by 3%
 \ifnum \deltaX > \deltaY%
    \multiply \deltaX by 3 \advance \deltaX by \deltaY%
 \else%
    \multiply \deltaY by 3 \advance \deltaX by \deltaY%
 \fi%
 \multiply\deltax by 500%
 \multiply\deltay by 500%
 \xpos\deltax \multiply \xpos by 3 \divide\xpos by \deltaX%
 \Xpos\deltax \multiply \Xpos by \deltaX \divide \Xpos by \Xend%
 \advance \xpos by \Xpos%
 \ypos\deltay \multiply \ypos by 3 \divide\ypos by \deltaX%
 \Ypos\deltay \multiply \Ypos by \deltaX \divide \Ypos by \Xend%
 \advance \ypos by \Ypos%
 \xy \ar@{=>}(\xpos,\ypos) \endxy%
}\ignorespaces}%
\def\iiixiiipppppp(#1,#2)|#3|/#4/<#5>#6<#7>[#8;#9]{%
 \xpos#1\ypos#2\relax%
 \def\next|##1##2##3##4##5##6##7|{\def\xa{##1}\def\xb{##2}%
 \def\xc{##3}\def\xd{##4}\def\xe{##5}\def\xf{##6}\nextt|##7|}%
 \def\nextt|##1##2##3##4##5##6|{\def\xg{##1}\def\xh{##2}%
 \def\xi{##3}\def\xj{##4}\def\xk{##5}\def\xl{##6}}%
 \next|#3|%
 \def\next<##1,##2>{\deltax##1\deltay##2}%
 \next<#5>%
 \def\next<##1,##2>{\deltaX##1\deltaY##2}%
 \next<#7>%
 \def\next##1{\topw##1\relax%
 \ifodd\topw \def\zl{}\else\def\zl{\relax}\fi \divide\topw by 2%
 \ifodd\topw \def\zk{}\else\def\zk{\relax}\fi \divide\topw by 2%
 \ifodd\topw \def\zj{}\else\def\zj{\relax}\fi \divide\topw by 2%
 \ifodd\topw \def\zi{}\else\def\zi{\relax}\fi \divide\topw by 2%
 \ifodd\topw \def\zh{}\else\def\zh{\relax}\fi \divide\topw by 2%
 \ifodd\topw \def\zg{}\else\def\zg{\relax}\fi \divide\topw by 2%
 \ifodd\topw \def\zf{}\else\def\zf{\relax}\fi \divide\topw by 2%
 \ifodd\topw \def\ze{}\else\def\ze{\relax}\fi \divide\topw by 2%
 \ifodd\topw \def\zd{}\else\def\zd{\relax}\fi \divide\topw by 2%
 \ifodd\topw \def\zc{}\else\def\zc{\relax}\fi \divide\topw by 2%
 \ifodd\topw \def\zb{}\else\def\zb{\relax}\fi \divide\topw by 2%
 \ifodd\topw \def\za{}\else\def\za{\relax}\fi}%
 \next{#6}%
 \def\next[##1`##2`##3`##4`##5`##6`##7`##8`##9]{%
 \def\nodea{##1}\def\nodeb{##2}\def\nodec{##3}%
 \def\noded{##4}\def\nodee{##5}\def\nodef{##6}%
 \def\nodeg{##7}\def\nodeh{##8}\def\nodei{##9}}%
 \next[#8]%
 \def\next[##1`##2`##3`##4`##5`##6`##7]{%
 \def\labela{##1}\def\labelb{##2}\def\labelc{##3}%
 \def\labeld{##4}\def\labele{##5}\def\labelf{##6}\nextt[##7]}%
 \def\nextt[##1`##2`##3`##4`##5`##6]{%
 \def\labelg{##1}\def\labelh{##2}\def\labeli{##3}%
 \def\labelj{##4}\def\labelk{##5}\def\labell{##6}}%
 \next[#9]%
 \def\next/##1`##2`##3`##4`##5`##6`##7/{%
\morphism(\xpos,\ypos)|\xe|/{##5}/<\deltax,0>[\nodeg`\nodeh;\labele]%
 \ifx\zi\empty\relax \morphism(\xpos,\ypos)||/<-/<-\deltaX,0>[\nodeg`0;]\fi%
 \ifx\zd\empty\relax \morphism(\xpos,\ypos)||<0,-\deltaY>[\nodeg`0;]\fi%
 \advance\xpos by \deltax%
 \morphism(\xpos,\ypos)|\xf|/{##6}/<\deltax,0>[\nodeh`\nodei;\labelf]%
 \ifx\ze\empty\relax \morphism(\xpos,\ypos)||<0,-\deltaY>[\nodeh`0;]\fi%
 \advance\xpos by \deltax%
 \ifx\zf\empty\relax \morphism(\xpos,\ypos)||<0,-\deltaY>[\nodei`0;]\fi%
 \ifx\zl\empty\relax \morphism(\xpos,\ypos)||<\deltaX,0>[\nodei`0;]\fi%
 \advance\ypos by \deltay%
 \ifx\zk\empty\relax \morphism(\xpos,\ypos)||<\deltaX,0>[\nodef`0;]\fi%
 \advance\xpos by -\deltax%
 \morphism(\xpos,\ypos)|\xd|/{##4}/<\deltax,0>[\nodee`\nodef;\labeld]%
 \advance\xpos by -\deltax%
 \morphism(\xpos,\ypos)|\xc|/{##3}/<\deltax,0>[\noded`\nodee;\labelc]%
 \ifx\zh\empty\relax \morphism(\xpos,\ypos)||/<-/<-\deltaX,0>[\noded`0;]\fi%
 \advance\ypos by \deltay%
 \morphism(\xpos,\ypos)|\xa|/{##1}/<\deltax,0>[\nodea`\nodeb;\labela]%
 \ifx\zg\empty\relax \morphism(\xpos,\ypos)||/<-/<-\deltaX,0>[\nodea`0;]\fi%
 \ifx\za\empty\relax \morphism(\xpos,\ypos)||/<-/<0,\deltaY>[\nodea`0;]\fi%
 \advance\xpos by \deltax%
 \morphism(\xpos,\ypos)|\xb|/{##2}/<\deltax,0>[\nodeb`\nodec;\labelb]%
 \ifx\zb\empty\relax \morphism(\xpos,\ypos)||/<-/<0,\deltaY>[\nodeb`0;]\fi%
 \advance\xpos by \deltax%
 \ifx\zc\empty\relax \morphism(\xpos,\ypos)||/<-/<0,\deltaY>[\nodec`0;]\fi%
 \ifx\zj\empty\relax \morphism(\xpos,\ypos)||<\deltaX,0>[\nodec`0;]\fi%
 \nextt/##7/}%
 \def\nextt/##1`##2`##3`##4`##5`##6/{%
 \morphism(\xpos,\ypos)|\xi|/{##3}/<0,-\deltay>[\nodec`\nodef;\labeli]%
 \advance\xpos by -\deltax%
 \morphism(\xpos,\ypos)|\xh|/{##2}/<0,-\deltay>[\nodeb`\nodee;\labelh]%
 \advance\xpos by -\deltax%
 \morphism(\xpos,\ypos)|\xg|/{##1}/<0,-\deltay>[\nodea`\noded;\labelg]%
 \advance\ypos by -\deltay%
 \morphism(\xpos,\ypos)|\xj|/{##4}/<0,-\deltay>[\noded`\nodeg;\labelj]%
 \advance\xpos by \deltax%
 \morphism(\xpos,\ypos)|\xk|/{##5}/<0,-\deltay>[\nodee`\nodeh;\labelk]%
 \advance\xpos by \deltax%
 \morphism(\xpos,\ypos)|\xl|/{##6}/<0,-\deltay>[\nodef`\nodei;\labell]}%
 \next/#4/\ignorespaces}%
\def\iiixiiip(#1){\ifnextchar|{\iiixiiipp(#1)}%
  {\iiixiiipp(#1)|aammbblmrlmr|}}%
\def\iiixiiipp(#1)|#2|{\ifnextchar/{\iiixiiippp(#1)|#2|}%
    {\iiixiiippp(#1)|#2|/>`>`>`>`>`>`>`>`>`>`>`>/}}%
\def\iiixiiippp(#1)|#2|/#3/{%
    \ifnextchar<{\iiixiiipppp(#1)|#2|/#3/}%
    {\iiixiiipppp(#1)|#2|/#3/<\default,\default>}}%
\def\iiixiiipppp(#1)|#2|/#3/<#4>{\ifnextchar[{\iiixiiippppp(#1)|#2|/#3/%
   <#4>0<0,0>}{\iiixiiippppp(#1)|#2|/#3/<#4>}}%
\def\iiixiiippppp(#1)|#2|/#3/<#4>#5{\ifnextchar<%
   {\iiixiiipppppp(#1)|#2|/#3/<#4>{#5}}%
   {\iiixiiipppppp(#1)|#2|/#3/<#4>{#5}<400,400>}}%
\def\iiixiipppppp(#1,#2)|#3|/#4/<#5>#6<#7>[#8;#9]{%
 \xpos#1\ypos#2\relax%
 \def\next|##1##2##3##4##5##6##7|{\def\xa{##1}\def\xb{##2}%
 \def\xc{##3}\def\xd{##4}\def\xe{##5}\def\xf{##6}\def\xg{##7}}%
 \next|#3|%
 \def\next<##1,##2>{\deltax##1\deltay##2}%
 \next<#5>%
 \deltaX#7%
 \topw#6%
 \def\next{%
 \ifodd\topw \def\za{}\else\def\za{\relax}\fi \divide\topw by 2%
 \ifodd\topw \def\zb{}\else\def\zb{\relax}\fi \divide\topw by 2%
 \ifodd\topw \def\zc{}\else\def\zc{\relax}\fi \divide\topw by 2%
 \ifodd\topw \def\zd{}\else\def\zd{\relax}\fi}%
 \next%
 \def\next[##1`##2`##3`##4`##5`##6]{%
 \def\nodea{##1}\def\nodeb{##2}\def\nodec{##3}%
 \def\noded{##4}\def\nodee{##5}\def\nodef{##6}}%
 \next[#8]%
 \def\next[##1`##2`##3`##4`##5`##6`##7]{%
 \def\labela{##1}\def\labelb{##2}\def\labelc{##3}%
 \def\labeld{##4}\def\labele{##5}\def\labelf{##6}\def\labelg{##7}}%
 \next[#9]%
 \def\next/##1`##2`##3`##4`##5`##6`##7/{%
 \ifx\zc\empty\relax\morphism(\xpos,\ypos)<\deltaX,0>[0`\noded;]\fi%
 \advance\xpos by\deltaX%
 \morphism(\xpos,\ypos)|\xc|/##3/<\deltax,0>[\noded`\nodee;\labelc]%
 \advance\xpos by \deltax%
 \morphism(\xpos,\ypos)|\xd|/##4/<\deltax,0>[\nodee`\nodef;\labeld]%
 \advance\xpos by \deltax%
 \ifx\zd\empty\relax  \morphism(\xpos,\ypos)<\deltaX,0>[\nodef`0;]\fi%
 \advance\xpos by -\deltaX  \advance\xpos by -\deltax%
 \advance\xpos by -\deltax  \advance\ypos by \deltay%
 \ifx\za\empty\relax\morphism(\xpos,\ypos)<\deltaX,0>[0`\nodea;]\fi%
 \advance\xpos by\deltaX%
 \morphism(\xpos,\ypos)|\xa|/##1/<\deltax,0>[\nodea`\nodeb;\labela]%
 \morphism(\xpos,\ypos)|\xe|/##5/<0,-\deltay>[\nodea`\noded;\labele]%
 \advance\xpos by \deltax%
 \morphism(\xpos,\ypos)|\xb|/##2/<\deltax,0>[\nodeb`\nodec;\labelb]%
 \morphism(\xpos,\ypos)|\xf|/##6/<0,-\deltay>[\nodeb`\nodee;\labelf]%
 \advance\xpos by \deltax%
 \morphism(\xpos,\ypos)|\xg|/##7/<0,-\deltay>[\nodec`\nodef;\labelg]%
 \ifx\zb\empty\relax \morphism(\xpos,\ypos)<\deltaX,0>[\nodec`0;]\fi}%
 \next/#4/\ignorespaces}%
\def\iiixiip(#1){\ifnextchar|{\iiixiipp(#1)}%
  {\iiixiipp(#1)|aabblmr|}}%
\def\iiixiipp(#1)|#2|{\ifnextchar/{\iiixiippp(#1)|#2|}%
    {\iiixiippp(#1)|#2|/>`>`>`>`>`>`>/}}%
\def\iiixiippp(#1)|#2|/#3/{%
    \ifnextchar<{\iiixiipppp(#1)|#2|/#3/}%
    {\iiixiipppp(#1)|#2|/#3/<\default,\default>}}%
\def\iiixiipppp(#1)|#2|/#3/<#4>{\ifnextchar[{\iiixiippppp(#1)|#2|/#3/%
   <#4>{0}<0>}{\iiixiippppp(#1)|#2|/#3/<#4>}}%
\def\iiixiippppp(#1)|#2|/#3/<#4>#5{\ifnextchar<%
   {\iiixiipppppp(#1)|#2|/#3/<#4>{#5}}%
   {\iiixiipppppp(#1)|#2|/#3/<#4>{#5}<0>}}%
\def\node#1(#2,#3)[#4]{%
\expandafter\gdef\csname x@#1\endcsname{#2}%
\expandafter\gdef\csname y@#1\endcsname{#3}%
\expandafter\gdef\csname ob@#1\endcsname{#4}%
\ignorespaces}%
\def\arrowp|#1|{\ifnextchar/{\arrowpp|#1|}{\arrowpp|#1|/>/}}%
\def\arrowpp|#1|/#2/[#3`#4;#5]{%
\xfinish=\csname x@#4\endcsname%
\yfinish=\csname y@#4\endcsname%
\advance\xfinish by -\csname x@#3\endcsname%
\advance\yfinish by -\csname y@#3\endcsname%
\morphism(\csname x@#3\endcsname,\csname y@#3\endcsname)|#1|/#2/%
<\xfinish,\yfinish>[\csname ob@#3\endcsname`\csname ob@#4\endcsname;#5]%
}%
\def\Loop(#1,#2)#3(#4,#5){\POS(#1,#2)*+!!<0ex,\axis>{#3}\ar@(#4,#5)}%
\def\iloop#1(#2,#3){\xy\Loop(0,0)#1(#2,#3)\endxy}%
     \let \PATHafterPOS\PATHafterPOS@default%
     \let \arsavedPATHafterPOS@@\relax%
     \let\afterar@@\relax%
\xydef@\endxyobj{\if\inxy@\else\xyerror@{Unexpected \string\endxy}{}\fi%
>  \relax%
>   \dimen@=\Y@max \advance\dimen@-\Y@min%
>   \ifdim\dimen@<\z@ \dimen@=\z@ \Y@min=\z@ \Y@max=\z@ \fi%
>   \dimen@=\X@max \advance\dimen@-\X@min%
>   \ifdim\dimen@<\z@ \dimen@=\z@ \X@min=\z@ \X@max=\z@ \fi%
>   \edef\tmp@{\egroup%
>     \setboxz@h{\kern-\the\X@min \boxz@}%
>     \ht\z@=\the\Y@max \dp\z@=-\the\Y@min \wdz@=\the\dimen@%
>     \noexpand\maybeunraise@ \raise\dimen@\boxz@%
>     \noexpand\recoverXyStyle@ \egroup \noexpand\xy@end%
>     \U@c=\the\Y@max \advance\U@c-\the\Y@c%
>     \D@c=-\the\Y@min \advance\D@c\the\Y@c%
>     \L@c=-\the\X@min  \advance\L@c\the\X@c%
>     \R@c=\the\X@max  \advance\R@c-\the\X@c%
>    }\tmp@}%
\gdef\xymerge@MinMax{}%
\xydef@\twocell{\hbox\bgroup\xysave@MinMax\@twocell}%
\xydef@\uppertwocell{\hbox\bgroup\xysave@MinMax\@uppertwocell}%
\xydef@\lowertwocell{\hbox\bgroup\xysave@MinMax\@lowertwocell}%
\xydef@\compositemap{\hbox\bgroup\xysave@MinMax\@compositemap}%
\xydef@\xysave@MinMax{\xdef\xymerge@MinMax{%
   \noexpand\ifdim\X@max<\the\X@max \X@max=\the\X@max\noexpand\fi%
   \noexpand\ifdim\X@min>\the\X@min \X@min=\the\X@min\noexpand\fi%
   \noexpand\ifdim\Y@max<\the\Y@max \Y@max=\the\Y@max\noexpand\fi%
   \noexpand\ifdim\Y@min>\the\Y@min \Y@min=\the\Y@min\noexpand\fi%
  }}%
\xydef@\drop@Twocell{\boxz@ \xymerge@MinMax}%
\xydef@\twocell@DONE{%
  \edef\tmp@{\egroup%
   \X@min=\the\X@min \X@max=\the\X@max%
   \Y@min=\the\Y@min \Y@max=\the\Y@max}\tmp@%
  \L@c=\X@c \advance\L@c-\X@min \R@c=\X@max \advance\R@c-\X@c%
  \D@c=\Y@c \advance\D@c-\Y@min \U@c=\Y@max \advance\U@c-\Y@c%
  \ht\z@=\U@c \dp\z@=\D@c \dimen@=\L@c \advance\dimen@\R@c \wdz@=\dimen@%
  \computeLeftUpness@%
  \setboxz@h{\kern-\X@p \raise-\Y@c\boxz@ }%
  \dimen@=\L@c \advance\dimen@\R@c \wdz@=\dimen@ \ht\z@=\U@c \dp\z@=\D@c%
  \Edge@c={\rectangleEdge}\Invisible@false \Hidden@false%
  \edef\Drop@@{\noexpand\drop@Twocell%
   \noexpand\def\noexpand\Leftness@{\Leftness@}%
   \noexpand\def\noexpand\Upness@{\Upness@}}%
  \edef\Connect@@{\noexpand\connect@Twocell%
   \noexpand\ifdim\X@max<\the\X@max \X@max=\the\X@max\noexpand\fi%
   \noexpand\ifdim\X@min>\the\X@min \X@min=\the\X@min\noexpand\fi%
   \noexpand\ifdim\Y@max<\the\Y@max \Y@max=\the\Y@max\noexpand\fi%
   \noexpand\ifdim\Y@min>\the\Y@min \Y@min=\the\Y@min\noexpand\fi }%
  \xymerge@MinMax%
}%
\let\to\rightarrow
\let\barrsquare\square
\let\square\undefined
\title{Singular fibers of stable maps and signatures of 4--manifolds}
\author{Osamu Saeki}
\address{Faculty of Mathematics\\
Kyushu University\\\newline
Hakozaki\\
Fukuoka 812-8581\\
Japan}
\email{saeki@math.kyushu-u.ac.jp}
\urladdr{http://www.math.kyushu-u.ac.jp/~saeki/}
\author{Takahiro Yamamoto}
\address{Department of Mathematics\\
Hokkaido University\\\newline
Sapporo 060-0810\\
Japan}
\email{taku\_chan@math.sci.hokudai.ac.jp}
\urladdr{}
\let\xysavmatrix\xymatrix
\def\xymatrix{\disablesubscriptcorrection\xysavmatrix}
\numberwithin{equation}{section}
\def\cnewtheorem#1[#2]#3{\newtheorem{#1}{#3}[section]
\expandafter\let\csname c@#1\endcsname\c@thm}
\newtheorem{thm}{Theorem}[section]
\theoremstyle{definition}
\theoremstyle{remark}
\newcommand{\rank}{\mathop{\mathrm{rank}}\nolimits}
\newcommand{\Int}{\mathop{\mathrm{Int}}\nolimits}
\newcommand{\id}{\mathop{\mathrm{id}}\nolimits}
\newcommand{\Diff}{\mathop{\mathrm{Diff}}\nolimits}
\begin{document}

\begin{asciiabstract}

\end{asciiabstract}

\begin{abstract} 
We show that for a $C^\infty$ stable map of an oriented 4--manifold
into a 3--manifold, the algebraic number of singular fibers of a
specific type coincides with the signature of the source 4--manifold.
\end{abstract}

\maketitle

\section{Introduction}\label{section1}

In \cite{Saeki04} the first named author developed the theory of singular
fibers of generic differentiable maps between manifolds of negative
codimension.  Here, the \emph{codimension} of a map $f \co M \to N$
between manifolds is defined to be $k = \dim N - \dim M$.  For $k \geq
0$, the fiber over a point in $N$ is a discrete set of points, as long as
the map is generic enough, and we can study the topology of such maps by
using the well-developed theory of multi-jet spaces (see, for example,
the article \cite{MatherV} by Mather).  However, in the case where $k <
0$, the fiber over a
point is no longer a discrete set, and is a complex of positive dimension
$-k$ in general.  This means that the theory of multi-jet spaces is not
sufficient any more, and in \cite{Saeki04} we have seen that the topology
of singular fibers plays an essential role in such a study.

In \cite{Saeki04}, as an explicit and important example of the theory
of singular fibers, $C^\infty$ stable maps of closed orientable
$4$--manifolds into $3$--manifolds were studied and their singular
fibers were completely classified up to the natural equivalence relation,
called the $C^\infty$ (or $C^0$) right-left equivalence (for a precise
definition, see \fullref{section2} of the present paper).  Furthermore, it
was proved that the number of singular fibers of a specific type (in the
terminology of \cite{Saeki04}, singular fibers of type $\mathrm{III}^8$)
of such a map is congruent modulo two to the Euler characteristic of
the source $4$--manifold (see \cite[Theorem~5.1]{Saeki04} and also
\fullref{cor:Euler} of the present paper).

In this paper, we will give an ``integral lift" of this modulo two
Euler characteristic formula.  More precisely, we consider $C^\infty$
stable maps of \emph{oriented} $4$--manifolds into $3$--manifolds, and
we give a sign $+1$ or $-1$ to each of its $\mathrm{III}^8$ type fiber,
using the orientation of the source $4$--manifold.  Then we show that
the algebraic number of $\mathrm{III}^8$ type fibers coincides with the
signature of the source oriented $4$--manifold (\fullref{thm:main}).

For certain Lefschetz fibrations, similar signature formulas have
already been proved by Matsumoto \cite{Matsumoto1,Matsumoto2}, Endo
\cite{Endo}, etc.  Our formula can be regarded as their analogue from
the viewpoint of singularity theory of generic differentiable maps.
The most important difference between Lefschetz fibrations and generic
differentiable maps is that not all manifolds can admit a Lefschetz
fibration, while every manifold admits a generic differentiable map.
(In fact, a single manifold admits a lot of generic differentiable maps.)
Furthermore, it is known that similar signature formulas do not hold
for arbitrary Lefschetz fibrations, since there exist oriented surface
bundles over oriented surfaces with nonzero signatures (see Meyer \cite{Meyer}).
In this sense, our formula is more general (see \fullref{rem:Lef}).
Our proof of the formula is based on the abundance of such generic maps
in some sense.

More precisely, our proof of the formula goes as follows.  We first
define the notion of a chiral singular fiber (for a precise definition,
see \fullref{section2}).  Roughly speaking, if a fiber can be transformed
to its ``orientation reversal" by an orientation preserving homeomorphism
of the source manifold, then we call it an achiral fiber; otherwise, a
chiral fiber.  On the other hand, we classify singular fibers of proper
$C^\infty$ stable maps of orientable $5$--manifolds into $4$--manifolds by
using methods developed in \cite{Saeki04}.  Then, for proper $C^\infty$
stable maps of oriented $4$--manifolds into $3$--manifolds, and those of
oriented $5$--manifolds into $4$--manifolds, we determine those singular
fibers in the classification lists which are chiral.  Furthermore, for
each chiral singular fiber that appears discretely, we define its sign
$(= \pm 1)$.

Let us consider two $C^\infty$ stable maps of $4$--manifolds into a
$3$--manifold which are oriented bordant.  Then by using a generic bordism
between them, which is a generic differentiable map of a $5$--manifold
into a $4$--manifold, and by looking at the $\mathrm{III}^8$--fiber locus
in the target $4$--manifold, we show the oriented bordism invariance of
the algebraic numbers of $\mathrm{III}^8$ type fibers of the original
stable maps of $4$--manifolds.  Finally, we verify our formula for
an explicit example of a stable map of an oriented $4$--manifold with
signature $+1$. (In fact, this final step is not so easy and needs a
careful analysis.)  Combining all these, we will prove our formula.

The paper is organized as follows.  In \fullref{section2} we give
some fundamental definitions concerning singular fibers of generic
differentiable maps, among which is the notion of a chiral singular
fiber.  In \fullref{section3} we recall the classification of singular
fibers of proper $C^\infty$ stable maps of orientable $4$--manifolds into
$3$--manifolds obtained in \cite{Saeki04}. In \fullref{section4} we present
the classification of singular fibers of proper $C^\infty$ stable maps
of orientable $5$--manifolds into $4$--manifolds. In \fullref{section5}
we determine those singular fibers in the classification lists which
are chiral. Furthermore, for each chiral singular fiber that appears
discretely, we define its sign by using the orientation of the source
manifold. In \fullref{section6} we prove the oriented bordism invariance of
the algebraic number of $\mathrm{III}^8$ type fibers. This is proved by
looking at the adjacencies of the chiral singular fiber loci in the target
manifold.  In \fullref{section7} we investigate the explicit example of a
$C^\infty$ stable map of an oriented $4$--manifold into a $3$--manifold
constructed in \cite{Saeki04}.  In order to calculate the signature of
the source $4$--manifold, we will compute the self-intersection number
of the surface of definite fold points by using normal sections coming
from the surface of indefinite fold points.  This procedure needs some
technical details so that this section will be rather long.  By combining
the result of \fullref{section6} with the computation of the example,
we prove our main theorem.  Finally in \fullref{section8}, we define
the universal complex of chiral singular fibers for proper $C^\infty$
stable maps of $5$--manifolds into $4$--manifolds and compute its third
cohomology group.  This will give an interpretation of our formula from
the viewpoint of the theory of singular fibers of generic differentiable
maps as developed in \cite{Saeki04}.

Throughout the paper, all manifolds and maps are differentiable of class
$C^\infty$.  The symbol ``$\cong$" denotes an appropriate isomorphism
between algebraic objects. For a space $X$, the symbol ``$\id_X$"
denotes the identity map of $X$.

The authors would like to express their thanks to Andr\'as Sz\H{u}cs
for drawing their attention to the work of Conner--Floyd, and to Go-o
Ishikawa for his invaluable comments and encouragement.  They would
also like to thank the referee for helpful comments.  The first named
author has been supported in part by Grant-in-Aid for Scientific Research
(No.~16340018), Japan Society for the Promotion of Science.

\section{Preliminaries}\label{section2}

Let us begin with some fundamental definitions.  For some of the
definitions, refer to \cite{Saeki04}.

\begin{dfn}\label{dfn:diffeo}
Let $M_i$ be smooth manifolds and $A_i \subset M_i$
be subsets, $i = 0, 1$. A continuous map $g \co A_0 \to A_1$
is said to be \emph{smooth} 
if for every point $q \in A_0$, there
exists a smooth map $\tilde{g} \co V \to M_1$ defined on a neighborhood
$V$ of $q$ in $M_0$ such that $\tilde{g}|_{V \cap A_0} = g|_{V \cap A_0}$.
Furthermore, a smooth map $g \co A_0 \to A_1$ is a \emph{diffeomorphism}
if it is a homeomorphism and its inverse is also 
smooth. When there exists a diffeomorphism
between $A_0$ and $A_1$, we say that they
are \emph{diffeomorphic}.
\end{dfn}

\begin{dfn}\label{dfn:equivalence}
Let $f_i \co M_i \to N_i$ be smooth maps and take points
$y_i \in N_i$, $i=0,1$. We say that the fibers over 
$y_0$ and $y_1$ are \emph{$C^{\infty}$ equivalent}
(or \emph{$C^0$ equivalent})
if for some open neighborhoods $U_i$ of $y_i$ in
$N_i$, there exist diffeomorphisms (respectively, homeomorphisms) 
$\tilde{\varphi} \co (f_0)^{-1}(U_0) \to (f_1)^{-1}(U_1)$ and 
$\varphi \co U_0 \to U_1$ with $\varphi(y_0) = y_1$
which make the following diagram commutative:
$$\bfig
\barrsquare<1500,400>[((f_0)^{-1}(U_0), (f_0)^{-1}(y_0))`
  ((f_1)^{-1}(U_1), (f_1)^{-1}(y_1))`
  (U_0, y_0)`
  (U_1, y_1);
  \tilde{\varphi}`f_0`f_1`\varphi]
  \efig$$
When the fibers over $y_0$ and $y_1$ are $C^\infty$ (or $C^{0}$) 
equivalent, we also say that the map germs $f_0 \co 
(M_0, (f_0)^{-1}(y_0)) \to (N_0, y_0)$ and $f_1 \co
(M_1, (f_1)^{-1}(y_1)) \to (N_1, y_1)$ are smoothly 
(or topologically) \emph{right-left equivalent}. 

When $y \in N$ is a regular value of a smooth map 
$f \co M \to N$ between smooth manifolds, we call $f^{-1}(y)$ 
a \emph{regular fiber}; otherwise, a \emph{singular fiber}.
\end{dfn}

\begin{dfn}\label{rem:codim}
Let $\mathfrak{F}$ be a $C^0$ equivalence class
of a fiber of a proper smooth map in the sense
of \fullref{dfn:equivalence}.
For a proper smooth map $f \co M \to N$ between
smooth manifolds, we denote by $\mathfrak{F}(f)$
the set consisting of those points of $N$
over which lies a fiber of type $\mathfrak{F}$.
It is known that if the smooth map $f$
is generic enough (for example if $f$
is a Thom map, see the book by Gibson, Wirthm\"uller, du Plessis and
Looijenga \cite{GWDL}), then
$\mathfrak{F}(f)$ is a union of strata\footnote{In the
case where $f$ is a Thom map, we consider the stratifications
of $M$ and $N$ with respect to which $f$ satisfies certain
regularity conditions. For details, see \cite[Chapter I, Section~3]{GWDL}.}
of $N$ and is a $C^0$ submanifold of $N$
of constant codimension
(for details, see \cite[Chapter~7]{Saeki04}).
Furthermore, this codimension $\kappa =
\kappa(\mathfrak{F})$ does not
depend on the choice of $f$ and we call it
the \emph{codimension} of $\mathfrak{F}$.
We also say that a fiber belonging to
$\mathfrak{F}$ is a \emph{codimension $\kappa$
fiber}. 
\end{dfn}

Let us introduce the following weaker relation
for (singular) fibers.

\begin{dfn}\label{dfn:modulo-regular}
Let $f_i \co (M_i, (f_i)^{-1}(y_i)) \to (N_i, y_i)$
be proper smooth map germs along fibers with
$n = \dim M_i$ and $p = \dim N_i$, $i = 0, 1$,
with $n \geq p$. We may assume that $N_i$ 
is the $p$--dimensional open disk $\Int D^p$
and that $y_i$ is its center $0$, $i = 0, 1$.
We say that the two fibers are $C^0$ (or $C^\infty$)
\emph{equivalent modulo regular fibers} if
there exist $(n-p)$--dimensional closed manifolds
$F_i$, $i = 0, 1$, such that
the disjoint union of $f_0$ and
the map germ $\pi_0 \co (F_0 \times \Int D^p,
F_0 \times \{0\}) \to (\Int D^p, 0)$
defined by the projection to the second factor
is $C^0$ (respectively, $C^\infty$) equivalent
to the disjoint union of $f_1$ and
the map germ $\pi_1 \co (F_1 \times \Int D^p,
F_1 \times \{0\}) \to (\Int D^p, 0)$
defined by the projection to the second factor.

Note that by the very definition,
any two regular fibers are
$C^\infty$ equivalent modulo regular
fibers to each other as long
as their dimensions of the source and the target
are the same.
\end{dfn}

For the $C^0$ equivalence modulo regular
fibers, we use the same notation as in
\fullref{rem:codim}. Then all the
assertions in \fullref{rem:codim}
hold for $C^0$ equivalence classes modulo
regular fibers as well.

The following definition is not so
important in this paper. However, in
order to compare it with \fullref{dfn:achiral},
we recall it. For details, refer to \cite{Saeki04}.

\begin{dfn}\label{dfn:co-ori}
Let $\mathfrak{F}$ be
a $C^0$ equivalence class of 
a fiber of a proper Thom map. Let us consider
arbitrary homeomorphisms 
$\tilde{\varphi}$ and $\varphi$ which make the diagram
$$\bfig
  \barrsquare<1500,400>[(f^{-1}(U_0), f^{-1}(y))`
    (f^{-1}(U_1),f^{-1}(y))`
    (U_0,y)`
    (U_1, y);
    \tilde{\varphi}`f`f`\varphi]
  \efig$$
commutative, 
where $f$ is a proper Thom map such that the fiber over $y$ belongs to 
$\mathfrak{F}$, and $U_i$ are open neighborhoods of $y$. 
Note that then we have 
$\varphi(\mathfrak{F}(f) \cap U_0)
= \mathfrak{F}(f) \cap U_1$. We say that 
$\mathfrak{F}$ is \emph{co-orientable} 
if $\varphi$ always preserves the
local orientation of the normal bundle of
$\mathfrak{F}(f)$ at $y$.

We also call any fiber belonging to a co-orientable $C^0$ equivalence 
class a \emph{co-orientable fiber}. 

In particular, 
if the codimension of $\mathfrak{F}$ coincides with the dimension 
of the target of $f$, then $\varphi$ above should preserve the 
local orientation of the target at $y$.

Note that if $\mathfrak{F}$ is co-orientable, then $\mathfrak{F}(f)$ 
has orientable normal bundle for every proper Thom map $f$.
\end{dfn}

The following definition plays an essential role
in this paper. Compare this with
\fullref{dfn:co-ori}.

\begin{dfn}\label{dfn:achiral}
Let $\mathfrak{F}$ be 
a $C^0$ equivalence class of a
fiber of a proper Thom map of an \emph{oriented}
manifold. We say that $\mathfrak{F}$ is
\emph{achiral} if there exist
homeomorphisms $\tilde{\varphi}$ and 
$\varphi$ which make the diagram
\begin{equation}
\bfig
\barrsquare<1500,400>[(f^{-1}(U_0), f^{-1}(y))`
    (f^{-1}(U_1), f^{-1}(y))`
    (U_0, y)`
    (U_1, y);
    \tilde{\varphi}`f`f`\varphi]
\efig
\label{eq:chiral}
\end{equation}
commutative such that the homeomorphism
$\tilde{\varphi}$ 
reverses the orientation and that the
homeomorphism 
\begin{equation}
\varphi|_{\mathfrak{F}(f) \cap U_0} \co
\mathfrak{F}(f) \cap U_0 \to \mathfrak{F}(f) \cap U_1
\label{eq:base}
\end{equation}
preserves the local orientation of $\mathfrak{F}(f)$ at $y$, 
where $f$ is a proper Thom map such that the fiber over $y$ belongs to 
$\mathfrak{F}$, and $U_i$ are open neighborhoods of $y$. 

Note that if the codimension of $\mathfrak{F}$ coincides with
the dimension of the target of $f$, then the condition about
the homeomorphism \eqref{eq:base} is redundant.
Note also that the above definition does not
depend on the choice of $f$ or $y$.

Moreover, we say that $\mathfrak{F}$
is \emph{chiral} if it is not achiral.

We also call any fiber belonging to a chiral (respectively, achiral) 
$C^0$ equivalence 
class a \emph{chiral fiber} (respectively, \emph{achiral fiber}). 
\end{dfn}

Furthermore, we have the following.

\begin{lem}
Suppose that the codimension of $\mathfrak{F}$
is strictly smaller than the dimension of the
target. Then
$\mathfrak{F}$ is achiral if and only if
there exist homeomorphisms $\tilde{\varphi}$
and $\varphi$
making the diagram \textup{\eqref{eq:chiral}}
commutative such that the homeomorphism $\tilde{\varphi}$
preserves the orientation and that the
homeomorphism \textup{\eqref{eq:base}} reverses the
orientation.
\end{lem}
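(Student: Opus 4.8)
The plan is to deduce both implications from a single construction: a ``reflection'' self-equivalence of the map $f$, supported near $y$, that reverses the orientation of the source $M$ while at the same time reversing the local orientation of $\mathfrak{F}(f)$ at $y$. Composing a commutative square of the form \eqref{eq:chiral} with this self-equivalence flips, at once, the orientation behaviour of $\tilde{\varphi}$ on $M$ and that of the restriction \eqref{eq:base} on $\mathfrak{F}(f)$, while preserving the commutativity of the square and every other relevant property; this is precisely the dictionary between the two formulations in the statement.

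To produce the self-equivalence, set $p = \dim N$, $\kappa = \kappa(\mathfrak{F})$ and $d = p - \kappa$; by hypothesis $d \geq 1$. Fix a proper Thom map $f \co M \to N$ whose fiber over $y$ belongs to $\mathfrak{F}$, and write $n = \dim M$. As a Thom map, $f$ is topologically locally trivial along the strata of $N$, in particular along the stratum of $\mathfrak{F}(f)$ containing $y$ (this is the local structure underlying \fullref{rem:codim}; see the references cited there). After shrinking, one obtains: a neighborhood $V$ of $y$ in $N$; a $\kappa$--dimensional submanifold $\Sigma \ni y$ transverse to that stratum, with $W$ a small neighborhood of $y$ in $\Sigma$; the transverse preimage $Z = f^{-1}(W) \subset M$, an $(n-d)$--manifold, with $f' = f|_Z \co Z \to W$; and homeomorphisms $h_0 \co V \to \mathbf{R}^d \times W$ and $h \co f^{-1}(V) \to \mathbf{R}^d \times Z$ with $h_0(y) = (0,y)$, carrying $f$ to $\id_{\mathbf{R}^d} \times f'$ and carrying $\mathfrak{F}(f) \cap V$ to $\mathbf{R}^d \times \{y\}$. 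Let $r \co \mathbf{R}^d \to \mathbf{R}^d$ be a linear reflection, and set $\tilde{\psi} = h^{-1} \circ (r \times \id_Z) \circ h$ on $f^{-1}(V)$ and $\psi = h_0^{-1} \circ (r \times \id_W) \circ h_0$ on $V$. Then $f \circ \tilde{\psi} = \psi \circ f$; the homeomorphism $\psi$ fixes $y$, preserves $\mathfrak{F}(f) \cap V$, and restricts on it to a conjugate of $r$, hence reverses the local orientation of $\mathfrak{F}(f)$ at $y$ --- this is where $d \geq 1$ is used. Finally $\tilde{\psi}$ reverses the orientation of $M$ along $f^{-1}(V)$: on each connected component of $\mathbf{R}^d \times Z$ the orientation transported from $M$ through $h$ is the product of $\pm$ the standard orientation of $\mathbf{R}^d$ with one on the corresponding component of $Z$ --- that component being an orientable connected product with connected factors --- and $r \times \id_Z$ reverses every such product orientation, since $r$ reverses the $\mathbf{R}^d$ factor.

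To finish, recall from \fullref{dfn:achiral} that achirality of $\mathfrak{F}$ --- and equally the condition asserted in the present statement --- may be tested with this single $f$ and this $y$, and over arbitrarily small neighborhoods of $y$. Suppose $\tilde{\varphi}$ and $\varphi$ make the diagram \eqref{eq:chiral} commute, with the neighborhoods taken small enough that the target neighborhood lies inside $V$ (permissible, by restriction). Replace $\tilde{\varphi}$ by $\tilde{\psi} \circ \tilde{\varphi}$ and $\varphi$ by $\psi \circ \varphi$: the new square is still commutative, because $f \circ \tilde{\psi} = \psi \circ f$; the source homeomorphism now has the opposite orientation type on $M$; the base homeomorphism \eqref{eq:base} now has the opposite orientation type on $\mathfrak{F}(f)$ at $y$; and nothing else changes. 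Applying this to data supplied by the achirality of $\mathfrak{F}$ (namely, $\tilde{\varphi}$ orientation-reversing and \eqref{eq:base} orientation-preserving) yields precisely a square with $\tilde{\varphi}$ orientation-preserving and \eqref{eq:base} orientation-reversing; applying it to data of this latter kind yields the former, hence exhibits $\mathfrak{F}$ as achiral. The two conditions are therefore equivalent.

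The only genuine subtlety --- the step I expect to require the most care --- is the orientation bookkeeping for $\tilde{\psi}$: one needs to know that the model map $r \times \id_Z$ is unambiguously orientation-reversing on the total space, regardless of how the local trivialization $h$ behaves on orientations, and this is exactly what the product-orientation argument in the second paragraph secures. Everything else is formal.
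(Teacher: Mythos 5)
Your proof is correct and follows essentially the same route as the paper: the paper also invokes the local topological triviality of the Thom map along the stratum through $y$ (via the second isotopy lemma, giving the $C^0$ product structure $f' \times \id_{\R^k}$ with $k>0$) and then uses the resulting reflection in the stratum direction, which simultaneously reverses the orientation of the source and the local orientation of $\mathfrak{F}(f)$ at $y$, to toggle between the two formulations. Your version merely spells out the orientation bookkeeping and the composition step that the paper leaves as ``easy'' and ``immediate''.
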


\begin{proof}
Let $f \co (M, f^{-1}(y)) \to (N, y)$ be a representative
of $\mathfrak{F}$, which is a proper Thom map. 
Let us consider the Whitney stratifications
$\mathcal{M}$ and $\mathcal{N}$
of $M$ and $N$ respectively with respect to which $f$ satisfies
certain regularity conditions \cite[Chapter I, Section~3]{GWDL}. We may
assume that $y$ belongs to a top dimensional
stratum of $\mathfrak{F}(f)$ with
respect to $\mathcal{N}$. By our hypothesis,
the dimension $k$ 
of this stratum is strictly positive.
Let $\Delta$ be a
small open disk of codimension $k$
centered at $y$ in $N$ which intersects
with the stratum transversely at $y$.
Set $M' = f^{-1}(\Delta)$. Note that $f'
= f|_{M'} \co M' \to \Delta$ is a proper
Thom map.

By the second isotopy lemma (for example,
see \cite[Chapter II, Section~5]{GWDL}), we see that the
map germ
$$f \co (M, f^{-1}(y)) \to (N, y)$$
is $C^0$ equivalent to 
the map germ
$$f' \times \id_{\R^k} \co (M'
\times \R^k, {f'}^{-1}(y) \times 0)
\to (\Delta \times \R^k, y \times 0).$$ 
Since $k$
is positive, it is now easy to
construct orientation reversing homeomorphisms
$\tilde{\varphi}$
and $\varphi$
making the diagram \textup{\eqref{eq:chiral}}
commutative
such that the
homeomorphism \textup{\eqref{eq:base}} reverses the
orientation.
Then the lemma
follows immediately.
This completes the proof.
\end{proof}

We warn the reader that even if a fiber is
chiral, homeomorphisms
$\tilde{\varphi}$ and $\varphi$
making the diagram \eqref{eq:chiral} commutative
may not satisfy any of the following.
\begin{itemize}
\item[$(1)$] The homeomorphism $\tilde{\varphi}$
preserves the orientation and the homeomorphism \eqref{eq:base}
preserves the orientation.
\item[$(2)$] The homeomorphism $\tilde{\varphi}$
reverses the orientation and the 
homeomorphism \eqref{eq:base}
reverses the orientation.
\end{itemize}
This is because $f^{-1}(U_i)$ may
not be connected. 

For example, a regular fiber is achiral
if and only if the fiber manifold admits
an orientation reversing homeomorphism.
The disjoint union of an achiral fiber
and an achiral regular fiber
is clearly achiral.
The disjoint union of a chiral fiber
and an achiral regular fiber is always chiral.

In what follows, we consider only those maps of
codimension $-1$ so that a regular
fiber is always of dimension $1$. 
Note that every compact $1$--dimensional
manifold admits an orientation reversing homeomorphism.
Therefore, for two fibers which are
$C^0$ equivalent modulo regular fibers,
one is chiral if and only if so is the other.
Therefore, 
we can speak of a chiral (or achiral)
$C^0$ equivalence class modulo regular
fibers as well.

\section{Singular fibers of stable
maps of $4$--manifolds into $3$--man\-i\-folds}\label{section3}

In this section, we consider proper $C^\infty$ stable maps
of orientable $4$--manifolds into $3$--manifolds and
recall the classification of their singular fibers
obtained in \cite{Saeki04}.

Let $M$ and $N$ be manifolds.
We say that a smooth map $f \co M \to N$ is \emph{$C^\infty$ stable}
(or \emph{stable} for short) if the $\mathcal{A}$--orbit of $f$ is open
in the mapping space $C^{\infty}(M,N)$ with respect to 
the Whitney $C^{\infty}$--topology.
Here, the $\mathcal{A}$--orbit of $f \in C^{\infty}(M,N)$ means
the following.
Let $\Diff{M}$ (or $\Diff{N}$)
denote the group of all diffeomorphisms of
the manifold $M$ (respectively, $N$). 
Then $\Diff{M} \times \Diff{N}$ acts on $C^{\infty}(M,N)$ by
$(\Phi, \Psi) \cdot f = \Psi \circ f \circ {\Phi}^{-1}$ for
$(\Phi, \Psi) \in \Diff{M} \times \Diff{N}$ and 
$f \in C^{\infty}(M,N)$.
Then the $\mathcal{A}$--orbit of $f \in C^{\infty}(M,N)$ means 
the orbit through $f$ with respect to this action.
Note that a proper $C^\infty$ stable map is always
a Thom map.

Since $(4, 3)$ is a nice dimension pair
in the sense of Mather \cite{MatherVI},
if $\dim M = 4$ and $\dim N = 3$, then
the set of all $C^\infty$ stable maps is
open and dense in $C^\infty(M, N)$ as long
as $M$ is compact. In particular, every
smooth map $M \to N$ can be approximated
arbitrarily well by a $C^\infty$ stable map.
This shows the abundance of such stable maps.

The following characterization of proper
$C^\infty$ stable maps of $4$--manifolds into
$3$--manifolds is well-known (for example, see
\cite{Saeki04}).

\begin{prop}\label{prop:stable}
A proper smooth map $f \co M \to N$ of a $4$--manifold $M$ into
a $3$--manifold $N$ is $C^\infty$ stable
if and only if the following conditions are
satisfied.

\textup{(i)}\qua
For every $q \in M$, there exist
local coordinates $(x, y, z, w)$ and $(X, Y, Z)$
around $q \in M$ and $f(q) \in N$ respectively
such that one of the following holds:
$$(X {\circ} f, Y {\circ} f, Z {\circ} f) =
\begin{cases}
(x, y, z) & \text{$q$ a regular point} \\
(x, y, z^2{+}w^2) & \text{$q$ a definite fold point} \\
(x, y, z^2{-}w^2) & \text{$q$ an indefinite fold point} \\
(x, y, z^3{+}xz{-}w^2) & \text{$q$ a cusp point} \\
(x, y, z^4{+}xz^2{+}yz{+}w^2) & \text{$q$ a definite swallowtail point} \\
(x, y, z^4{+}xz^2{+}yz{-}w^2) & \text{$q$ an indefinite swallowtail point} 
\end{cases}$$
\textup{(ii)}\qua
Set $S(f) = \{q \in M : \rank df_q < 3\}$, which
is a regular closed $2$--dimensional submanifold of $M$
under the above condition \textup{(i)}. Then, for every $r \in f(S(f))$,
$f^{-1}(r) \cap S(f)$ consists of at most three points and
the multi-germ 
$$(f|_{S(f)}, f^{-1}(r) \cap S(f))$$ 
is smoothly right-left
equivalent to one of the six multi-germs as described in
\fullref{fig3}: $(1)$ represents a single immersion
germ which corresponds to a fold point,
$(2)$ and $(4)$ represent normal crossings of two and three
immersion germs, respectively, each
of which corresponds to a fold point, 
$(3)$ corresponds to a cusp point, $(5)$ represents
a transverse crossing of a cuspidal edge 
\index{cuspidal edge}
as in $(3)$
and an immersion germ corresponding to a fold
point, and $(6)$ corresponds to a swallowtail point.
\end{prop}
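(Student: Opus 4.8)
The plan is to deduce \fullref{prop:stable} from Mather's characterization of $C^\infty$ stable mappings, specialized to the dimension pair $(4,3)$, which lies in the nice range in the sense of Mather \cite{MatherVI} as already noted above. In the nice range a proper smooth map $f$ is $C^\infty$ stable if and only if it is locally stable, i.e.\ every multi-germ $f\colon (M, f^{-1}(r)) \to (N,r)$ over a point $r\in N$ is a stable (equivalently infinitesimally stable) multi-germ; and properness guarantees that each fiber $f^{-1}(r)$ is finite, so only finitely many mono-germs are involved. Thus the statement reduces to two enumeration problems: (a) classify the stable mono-germs $(\R^4,0)\to(\R^3,0)$, and (b) classify the stable multi-germs obtained by placing finitely many such mono-germs in general position over one target point. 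Conditions (i) and (ii) are exactly the outcomes of (a) and (b), and the converse direction (that (i) plus (ii) imply stability) follows from the same infinitesimal computations.

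For (a), I would first bound the corank: the locus of $3\times 4$ matrices of rank $\le 1$ has codimension $(3-1)(4-1)=6>4$, so by Thom transversality the corank-$2$ stratum is empty for a generic, hence stable, map; every stable germ therefore has corank at most $1$. A corank-$1$ germ is, after a coordinate change, of Morin type $A_k$: the first $p-1=2$ coordinates are projected and the last target coordinate has the form $z^{k+1}+x_1z^{k-1}+\cdots+x_{k-1}z+\epsilon w^2$ with $\epsilon=\pm1$. Stability of $A_k$ requires the unfolding parameters $x_1,\dots,x_{k-1}$ to fit, i.e.\ $k-1\le p-1$, hence $k\le 3$, giving $k=0,1,2,3$ --- regular point, fold, cusp, swallowtail, with $A_3$ exactly the boundary case for $p=3$. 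Finally one records the sign $\epsilon$: for $A_1$ the last coordinate is the nondegenerate quadratic form $z^2+w^2$ (definite) or $z^2-w^2$ (indefinite); for $A_3$ the two forms $z^4+xz^2+yz+w^2$ and $z^4+xz^2+yz-w^2$ are inequivalent (definite versus indefinite swallowtail); whereas for $A_2$ the two signs of $w^2$ give smoothly right-left equivalent germs, via $Z\mapsto -Z$ followed by $z\mapsto -z$, which is available precisely because the leading term $z^3$ has odd degree. This yields the six normal forms in (i).

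For (b), note that by (i) the critical set $S(f)=\{\rank df<3\}$ is a smooth closed surface in $M^4$ with the fold locus as an open dense part and the cusp curve and finitely many swallowtail points as lower strata; $f|_{S(f)}$ is an immersion along the fold locus and has the standard cuspidal-edge, respectively swallowtail, singularity along the cusp curve, respectively at the swallowtails, so $f(S(f))$ is a surface in $N^3$ with cuspidal edges and swallowtail singularities. Stability of the multi-germ over $r\in f(S(f))$ is equivalent to general position of the mono-germs there, and a codimension count in $N^3$ --- a fold sheet has codimension $1$, a cuspidal edge codimension $2$, a swallowtail codimension $3$, and the budget is $\dim N=3$ --- together with multi-transversality in the appropriate multi-jet space leaves exactly the six incidence patterns of \fullref{fig3}: one fold sheet; two fold sheets crossing transversally; one cusp; three fold sheets in general position (a triple point); a cuspidal edge meeting a fold sheet transversally; and one swallowtail. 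In particular $f^{-1}(r)\cap S(f)$ has at most three points and four or more sheets cannot meet.

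The hard part will be (b): organizing the multi-transversality bookkeeping so that it produces exactly these six configurations --- ruling out quadruple and higher points, two interacting cuspidal edges, and a cuspidal edge passing through a double point, and correctly accounting for the codimension of mixed strata such as a cuspidal edge lying inside a fold sheet. Verifying that the boundary Morin type $A_3$ is genuinely stable when $p=3$ (rather than merely admitting a formal unfolding) also deserves care. Everything else --- the corank bound, the sign analysis, and the converse implication --- is routine once Mather's theorem is invoked.
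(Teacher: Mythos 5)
The paper itself does not prove this proposition: it is stated as a well-known characterization, with the reference \cite{Saeki04} standing in for the argument, so there is no in-paper proof to compare against line by line. Your outline is essentially the standard route that the cited sources follow: reduce to local stability via Mather's theory in the nice dimension pair $(4,3)$, bound the corank by the codimension count $(3-1)(4-1)=6>4$, classify the corank--one stable germs as Morin singularities $A_k$ with $k\le p=3$, observe that the sign of the quadratic term is removable exactly for the cusp (your coordinate change $Z\mapsto -Z$, $z\mapsto -z$ is the right one) and not for the fold or swallowtail, and then obtain the six multi-germs of condition (ii) from multi-jet transversality and the codimension budget in the target (fold sheet $1$, cuspidal edge $2$, swallowtail $3$), which rules out quadruple points, pairs of cuspidal edges, a cuspidal edge through a double point, and a swallowtail meeting another sheet. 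This is the correct and, as far as the literature goes, the intended proof; the honest flagging of the multi-transversality bookkeeping and of the stability of the boundary case $A_3$ is appropriate.

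One slip should be corrected before you rely on the reduction. Properness does \emph{not} make the fibers $f^{-1}(r)$ finite: here $\dim M>\dim N$, so a fiber is a $1$--dimensional complex. Mather's local-to-global criterion for proper maps is phrased in terms of multigerms at \emph{finite} subsets of $f^{-1}(r)$ lying in the singular set $S(f)$ (with at most $p+1=4$ points); regular points contribute submersion germs and are irrelevant. What properness buys is compactness of $f^{-1}(r)\cap S(f)$, and its discreteness (hence finiteness) comes from the transversality/codimension argument, not from properness alone. Similarly, in the corank bound the implication you need is that a \emph{stable} map has jet extension transverse to the corank strata (a consequence of stability), rather than ``generic, hence stable.'' With these two points restated, the reduction to your steps (a) and (b) is sound and matches the standard proof that the paper leaves to the references.
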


\begin{figure}[ht!]
\labellist\small
\pinlabel {$(1)$} at 124 571
\pinlabel {$(2)$} at 394 571
\pinlabel {$(3)$} at 655 571
\pinlabel {$(4)$} at 124 253
\pinlabel {$(5)$} at 394 253
\pinlabel {$(6)$} at 655 253
\endlabellist
\centerline{\includegraphics[scale=0.4]{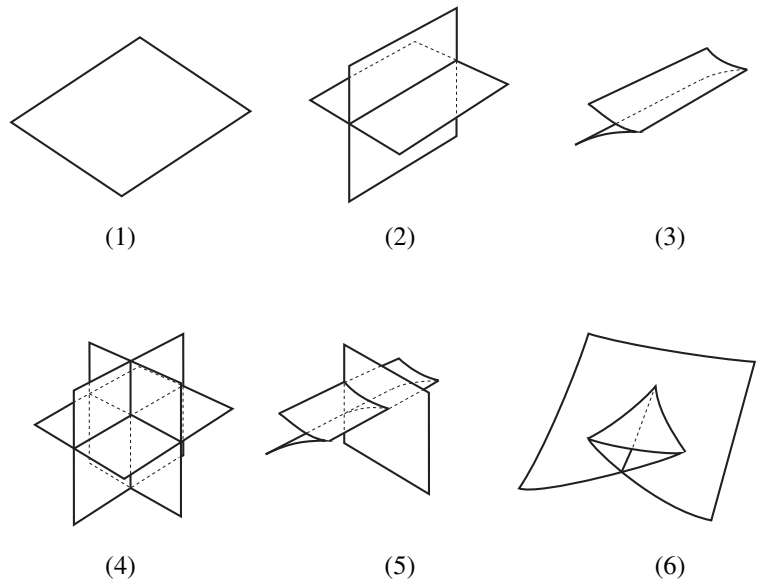}}
\caption{Multi-germs of $f|_{S(f)}$}
\label{fig3}
\end{figure}

In the following, we assume that the $4$--manifold
$M$ is orientable. Using \fullref{prop:stable},
the first named author obtained the following classification
of singular fibers \cite{Saeki04}.
\eject

\begin{thm}\label{thm:4classification}
Let $f \co M \to N$ be a proper $C^\infty$ stable map
of an orientable $4$--manifold $M$ into
a $3$--manifold $N$. Then, every singular fiber of $f$
is $C^\infty$ (and hence $C^0$)
equivalent modulo regular
fibers to one of the fibers as in
\fullref{fig4}. 
Furthermore, no two fibers appearing in 
the list are $C^0$ equivalent modulo
regular fibers.
\end{thm}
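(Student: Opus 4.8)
Since this classification is the one obtained in \cite{Saeki04}, I sketch how one establishes it from \fullref{prop:stable}. Fix a singular value $y$ of a proper $C^\infty$ stable map $f\co M\to N$. As $f$ is proper and a Thom map, $f^{-1}(y)$ is a compact graph: it is a smooth $1$--manifold away from the finite set $f^{-1}(y)\cap S(f)$, and near each of those points it has the model prescribed by \fullref{prop:stable}(i). Because the closed $(n-p)$--manifold $F$ appearing in \fullref{dfn:modulo-regular} is here a disjoint union of circles, the problem splits into three steps: (a) compute the local fiber germ at each of the six normal forms; (b) enumerate the ways such germs can occur simultaneously over one value $y$, which is governed by \fullref{prop:stable}(ii) and the six multi-germs of \fullref{fig3}; and (c) enumerate the ways the resulting free $1$--manifold branches can be closed up into a compact graph, identifying configurations that differ only by circle components (which, being regular fibers, are absorbed).

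For (a) one reads off from the normal forms: a regular point contributes an arc; a definite fold an isolated point; an indefinite fold two transverse arcs through the point; a cusp point the one--parameter family $w^2=z^3+xz-Z$ degenerating to a cuspidal arc; and each swallowtail the corresponding quartic family $w^2=\pm\bigl((z^4+xz^2+yz)-Z\bigr)$. For (b) I would organize the analysis by the codimension $\kappa=\kappa(\mathfrak F)$. Since $f^{-1}(r)\cap S(f)$ has at most three points, and the image of a single fold, of a cusp curve, of a transverse double fold crossing, and of a transverse triple fold crossing or a swallowtail point has codimension $1$, $2$, $2$, and $3$ respectively, only $\kappa=1,2,3$ occur; for each the admissible collisions form a short list: $\kappa=1$, a single fold; $\kappa=2$, two transversely crossing folds or a single cusp; $\kappa=3$, three transversely crossing folds, a cusp meeting a fold transversely, or a swallowtail. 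Combining these with the possible definite/indefinite decorations gives a finite catalogue of germs along $f^{-1}(y)\cap S(f)$.

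For (c) one uses that $f^{-1}(y)$ is a compact $1$--complex, so the finitely many branch ends produced in (b) must be joined in pairs by arcs of regular points; up to homeomorphism of the fiber each such pairing either yields one of the graphs in \fullref{fig4} or merely adjoins a regular circle component, which by \fullref{dfn:modulo-regular} is discarded. Running through all the cases of (b) and (c) reproduces the list. For the distinctness assertion I would attach to each entry of \fullref{fig4} a tuple of invariants unchanged under adjoining regular circles: the codimension $\kappa$; the number of points of $S(f)$ over $y$ of each kind (definite fold, indefinite fold, cusp, definite/indefinite swallowtail); and, to separate the remaining cases with equal $\kappa$ and equal point counts, the homeomorphism type of the essential part of the fiber (the union of its components meeting $S(f)$) as a graph---its first Betti number and the valences of its vertices---together with whether a regular neighborhood in $M$ is a trivial $D^3$--bundle along each such component. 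One then checks pairwise that some entry of the tuple differs; by construction no two entries of \fullref{fig4} agree on all of them.

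The main difficulty I expect is the exhaustiveness of steps (b) and (c): one must verify that no admissible collision of local models has been overlooked---especially the cusp and swallowtail cases, whose local fibers are genuine degenerating families rather than plain graphs---and that the free branches have been closed up in all possible ways, with the reduction ``modulo regular fibers'' applied carefully so that genuinely distinct types are neither merged nor artificially split. The $\kappa=3$ configurations where a cusp curve meets a fold sheet, and the two swallowtail cases, are where one must work directly with the explicit cubic and quartic normal forms, and this is the most delicate part of the enumeration.
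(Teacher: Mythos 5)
There is a genuine gap, and it is visible even before checking any case analysis: your argument never uses the orientability of $M$, which is an essential hypothesis. The equivalence being classified is $C^\infty$ (or $C^0$) right-left equivalence of the map germ on a whole neighborhood $f^{-1}(U)$ of the fiber, modulo regular fibers --- not homeomorphism of the fiber as a decorated graph. Your steps (b)--(c) enumerate the local singular models and the ways their branch ends can be paired into a compact $1$--complex, and then declare that each resulting graph gives an entry of \fullref{fig4}. But two fibers can have identical graphs and identical local models at the singular points and still be inequivalent, because the germ of $f$ near the fiber (equivalently, the topology of the nearby regular fibers and the twisting of the local trivializations over the regular arcs) differs; this is exactly the phenomenon that produces the extra fibers in the non-orientable classification of the second author cited in the paper \cite{YT1,YT2}. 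Already for a single indefinite fold point the gluing of the four local branch ends carries orientation data that your graph-level bookkeeping cannot see, and an argument blind to it would ``prove'' the same list for non-orientable $M$, which is false. So the completeness claim needs more than closing up branches: one must show that, for orientable $M$, the germ of $f$ along the fiber is determined up to $C^\infty$ equivalence by the configuration, which in \cite{Saeki04} is done by trivializing $f$ over the regular part of the fiber (relative Ehresmann-type arguments) and controlling the finitely many gluing choices, with orientability eliminating the twisted ones.

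Two smaller points. First, the paper itself does not reprove this theorem; it quotes the classification from \cite{Saeki04}, whose proof does follow your local-to-global scheme but with the germ-level control described above --- so your outline is aimed in the right direction but stops short of the actual content. Second, your distinctness argument takes ``the number of singular points of each type over $y$'' as an invariant of $C^0$ equivalence modulo regular fibers; a homeomorphism $\tilde\varphi$ commuting with the maps is not a priori required to preserve singularity types, so this must be justified (in \cite{Saeki04} it is, by comparing local structures and nearby fibers), and the vague invariant ``whether a regular neighborhood in $M$ is a trivial $D^3$--bundle'' would need to be made precise before it can separate any cases.
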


\begin{figure}[htp!]
\centering
\labellist\small
\pinlabel {$\kappa = 1$} [l] at -213 751
\pinlabel {$\mathrm{I}^0$} [l] at -90 751
\pinlabel {$\mathrm{I}^1$} [l] at 110 751
\pinlabel {$\kappa = 2$} [l] at -213 655
\pinlabel {$\mathrm{II}^{0,0}$} [l] at -90 655
\pinlabel {$\mathrm{II}^{0,1}$} [l] at 110 655
\pinlabel {$\mathrm{II}^{1,1}$} [l] at 360 655
\pinlabel {$\mathrm{II}^2$} [l] at -90 540
\pinlabel {$\mathrm{II}^3$} [l] at 110 540
\pinlabel {$\mathrm{II}^a$} [l] at 360 540
\pinlabel {$\kappa = 3$} [l] at -213 387
\pinlabel {$\mathrm{III}^{0,0,0}$} [l] at -100 390
\pinlabel {$\mathrm{III}^{0,0,1}$} [l] at 130 390
\pinlabel {$\mathrm{III}^{0,1,1}$} [l] at 390 390
\pinlabel {$\mathrm{III}^{1,1,1}$} [l] at -100 240
\pinlabel {$\mathrm{III}^{0,2}$} [l] at 180 240
\pinlabel {$\mathrm{III}^{0,3}$} [l] at 420 240
\pinlabel {$\mathrm{III}^{1,2}$} [l] at -100 70
\pinlabel {$\mathrm{III}^{1,3}$} [l] at 160 70
\pinlabel {$\mathrm{III}^4$} [l] at 450 70
\pinlabel {$\mathrm{III}^5$} [l] at -100 -110
\pinlabel {$\mathrm{III}^6$} [l] at 125 -110
\pinlabel {$\mathrm{III}^7$} [l] at 370 -110
\pinlabel {$\mathrm{III}^8$} [l] at -100 -260
\pinlabel {$\mathrm{III}^{0,a}$} [l] at 150 -260
\pinlabel {$\mathrm{III}^{1,a}$} [l] at 410 -260
\pinlabel {$\mathrm{III}^b$} [l] at -100 -440
\pinlabel {$\mathrm{III}^c$} [l] at 150 -440
\pinlabel {$\mathrm{III}^d$} [l] at 320 -440
\pinlabel {$\mathrm{III}^e$} [l] at -100 -600
\endlabellist
\includegraphics[width=10cm]{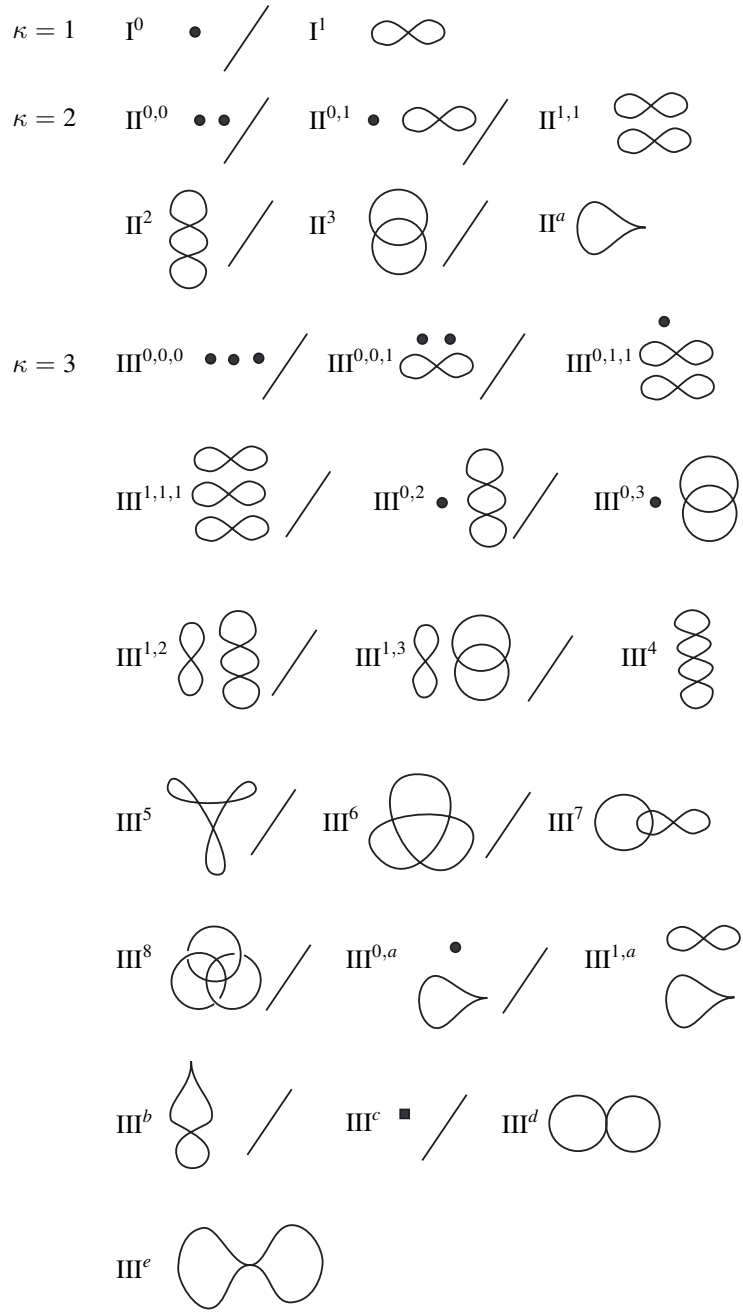}
\caption{List of singular fibers of proper $C^\infty$ stable maps of
orientable $4$--manifolds into $3$--manifolds}
\index{singular fiber!list}
\label{fig4}
\end{figure}

\begin{rem}\label{rmk:name}
In \fullref{fig4},
$\kappa$ denotes the codimension 
of the relevant singular fiber
in the sense of \fullref{rem:codim}.
Furthermore, $\mathrm{I}^*, \mathrm{II}^*$ and
$\mathrm{III}^*$ mean the names of the corresponding
singular fibers, and ``$/$" is used only for
separating the figures. Note that we have named the
fibers so that each connected fiber has its
own digit or letter, and a disconnected fiber
has the name consisting of the digits or letters
of its connected components. Hence,
the number of digits or letters in the superscript
coincides with the number of connected components
that contain singular points.
\end{rem}

\begin{rem}
For proper $C^\infty$ stable maps
of $3$--manifolds into the plane,
a similar classification of singular fibers
was obtained by Kushner, Levine and Porto
\cite{KLP,Levine1}, although they
did not mention explicitly the equivalence
relation for their classification.
Their classification was in fact
based on the ``diffeomorphism
modulo regular fibers".
\end{rem}

\begin{rem}
For proper $C^\infty$ stable maps of
general (possibly nonorientable)
$4$--manifolds
into $3$--manifolds, a similar classification
of singular fibers was obtained by the
second named author in \cite{YT1,YT2}.
\end{rem}

\section{Singular fibers of stable maps 
of $5$--manifolds into $4$--man\-i\-folds}\label{section4}

In this section we give a characterization of 
$C^\infty$ stable maps of $5$--manifolds into $4$--manifolds
and present the classification of singular fibers
of such maps.

First note that since $(5, 4)$ is a nice
dimension pair in the sense of Mather
\cite{MatherVI}, for a $5$--manifold $M$ and
a $4$--manifold $N$, the set of all $C^\infty$
stable maps is open and dense in the mapping
space $C^\infty(M, N)$, as long as $M$
is compact. 

By using standard methods in singularity theory
(for example, see the book \cite{GG} by Golubitsky and Guillemin),
together with a result of Ando \cite{Ando}, we can prove
the following characterization of stable maps
of $5$--manifolds into $4$--manifolds.

\begin{prop}\label{prop:5chara}
A proper smooth map $f \co M \to N$ of a $5$--manifold $M$ into
a $4$--manifold $N$ is $C^\infty$ stable
if and only if the following conditions are
satisfied.

\textup{(i)}
For every $q \in M$, there exist
local coordinates $(a, b, c, x, y)$ and $(X, Y, Z, W)$
around $q \in M$ and $f(q) \in N$ respectively
such that one of the following holds:
\begin{multline*}
(X {\circ} f,Y {\circ} f,Z {\circ} f,W {\circ} f) = \\
\begin{cases}
  (a,b,c,x)                       & \text{$q$ a regular point}\\
  (a,b,c,x^2{+}y^2)               & \text{$q$ a definite fold point}\\
  (a,b,c,x^2{-}y^2)               & \text{$q$ an indefinite fold point}\\
  (a,b,c,x^3{+}ax{-}y^2)          & \text{$q$ a cusp point}\\
  (a,b,c,x^4{+}ax^2{+}bx{+}y^2)   & \text{$q$ a definite swallowtail point} \\
  (a,b,c,x^4{+}ax^2{+}bx{-}y^2)   &
    \text{$q$ an indefinite swallowtail point} \\
  (a,b,c,x^5{+}ax^3{+}bx^2{+}cx{-}y^2) &
    \text{$q$ a butterfly point}\\
  (a,b,c,3x^2y{+}y^3{+}a(x^2{+}y^2){+}bx{+}cy) &
    \text{$q$ a definite $D_4$ point}\\
  (a,b,c,3x^2y{-}y^3{+}a(x^2{+}y^2){+}bx{+}cy) &
    \text{$q$ an indefinite $D_4$ point} 
\end{cases}
\end{multline*}
\textup{(ii)}
Set $S(f) = \{q \in M\,:\, \rank df_q < 4\}$, which
is a regular closed $3$--dimensional submanifold of $M$
under the above condition \textup{(i)}. 
Then, for every $r \in f(S(f))$,
$f^{-1}(r) \cap S(f)$ consists of at most four points and
the multi-germ 
$$(f|_{S(f)}, f^{-1}(r) \cap S(f))$$ 
is smoothly right-left
equivalent to one of the thirteen multi-germs
as follows:
\begin{enumerate}
\item A single immersion germ which corresponds to a fold point 
\item A normal crossing of two immersion germs, each of which
  corresponds to a fold point
\item A cuspidal edge which corresponds to a single cusp point
\item A normal crossing of three immersion germs, each of which
  corresponds to a fold point
\item A transverse crossing of a cuspidal edge
  and an immersion germ corresponding to a fold point
\item A map germ corresponding to a swallowtail point
\item A normal crossing of four immersion germs, each of which
  corresponds to a fold point
\item A transverse crossing of a cuspidal edge and a normal crossing 
  of two immersion germs which correspond to fold points 
\item A transverse crossing of two cuspidal edges 
\item A transverse crossing of a swallowtail germ and an immersion germ
  corresponding to a fold point 
\item A map germ corresponding to a butterfly point 
\item A map germ corresponding to a definite $D_4$ point 
\item A map germ corresponding to an indefinite $D_4$ point
\end{enumerate}
\end{prop}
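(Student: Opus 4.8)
The plan is to carry out the usual classification of stable maps in a nice range of dimensions. Since $(5,4)$ lies in Mather's nice dimensions \cite{MatherVI}, a proper smooth map $f\co M\to N$ is $C^\infty$ stable if and only if it is infinitesimally stable, which in turn holds if and only if every multi-germ of $f$ is infinitesimally stable. So I would reduce the statement to two classification problems: (a) the infinitesimally stable mono-germs $(\R^5,0)\to(\R^4,0)$, and (b) the infinitesimally stable multi-germs built out of them. Because the stable maps are open and dense in $C^\infty(M,N)$ for $M$ compact, only germ-types whose associated locus has small enough codimension actually appear, and all those below the threshold are realized: a singular mono-germ whose $S(f)$-locus has codimension $>5$ in $M^5$, or a multi-germ whose locus in $N$ has codimension $>\dim N=4$, is excluded. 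Thus it suffices to run the two enumerations subject to these two codimension bounds.

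For condition \textup{(i)}: at a singular point $\rank df_0\le3$, but the locus where $\rank df\le2$ has codimension $(4-2)(5-2)=6>5$ in $M^5$ and so does not occur for a generic $f$; hence every singular point has $\rank df_0=3$. At such a point the intrinsic second-order part is a quadratic form $g$ on the $2$--dimensional kernel, valued in the $1$--dimensional cokernel, and the singular set stratifies according to $\rank g$. If $\rank g=2$ the point is a fold (codimension $2$ in $M$); if $\rank g\le1$ one more condition is imposed and the Morin chain continues, giving cusps ($A_2$, codimension $3$), swallowtails ($A_3$, codimension $4$) and butterflies ($A_4$, codimension $5$), while $A_5$ already has codimension $6>5$ and is absent; if $\rank g=0$ the point has codimension $5$ in $M$, the third-order part is then a nondegenerate binary cubic form, and the two $\mathrm{GL}_2(\R)$--orbits of such forms give exactly the $D_4$ umbilic in its two real shapes $3x^2y\pm y^3$ (nothing deeper, such as $D_5$, fits within codimension $5$). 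Putting Morin's normal forms together with the suspension by $\pm y^2$ yields the fold/cusp/swallowtail/butterfly entries, where the two suspension signs are $\mathcal A$--inequivalent exactly for $A_1$ (definite versus indefinite fold) and $A_3$ (definite versus indefinite swallowtail), and equivalent for $A_2$ and $A_4$ (so a single cusp and a single butterfly); Ando's analysis \cite{Ando} supplies the two $D_4$ normal forms and confirms that these, with the regular germ, exhaust the stable mono-germs. That each of the nine resulting germs is infinitesimally stable is a direct check with Mather's criterion.

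For condition \textup{(ii)}: by the methods of \cite{Saeki04}, the stability condition on $f$ near a finite set $f^{-1}(r)\cap S(f)$ translates into a general-position condition on the multi-germ of $f|_{S(f)}$ there; infinitesimal stability forces the branches to meet transversally (normal crossings away from cusps, a cuspidal edge at a cusp, and so on), so the locus in $N$ cut out by a configuration of branches has codimension equal to the sum of the codimensions of the single branches. The single-branch codimensions in $N$ are $1$ for a fold, $2$ for a cuspidal edge, $3$ for a swallowtail, $4$ for a butterfly, and $4$ for each $D_4$ point. Requiring the total to be $\le\dim N=4$ enumerates exactly: codimension $1$ --- a single fold ($(1)$); codimension $2$ --- two folds ($(2)$) or one cusp ($(3)$); codimension $3$ --- three folds ($(4)$), a cusp crossed with a fold ($(5)$), or a swallowtail ($(6)$); codimension $4$ --- four folds ($(7)$), a cusp crossed with two folds ($(8)$), two cusps crossed ($(9)$), a swallowtail crossed with a fold ($(10)$), a butterfly ($(11)$), a definite $D_4$ point ($(12)$), or an indefinite $D_4$ point ($(13)$). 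Anything else --- five or more branches, three cuspidal edges, a second swallowtail, or anything crossed with a butterfly or a $D_4$ point --- exceeds codimension $4$; this is why $f^{-1}(r)\cap S(f)$ has at most four points and why butterfly and $D_4$ points occur isolated. So the list of thirteen multi-germs in \textup{(ii)} is exactly this enumeration, and one checks, again via tangent spaces, that each configuration is realized by an infinitesimally stable map and that no two are equivalent.

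The routine ingredients are Morin's corank-one classification and the transversality bookkeeping for the multi-germs. The hard part will be the corank-two local analysis --- establishing that the only stable germ with vanishing intrinsic second-order part in this dimension range is the $D_4$ umbilic, in precisely the two stated real forms --- which is exactly the point where Ando's result \cite{Ando} is needed and cannot be sidestepped. A further point needing care is to pin down the exact normal forms, in particular the precise way the unfolding parameters $a,b,c$ and the signs enter, rather than merely recording $\mathcal A$--equivalence classes, and to verify the infinitesimal-stability equations for all of the mono- and multi-germs that appear.
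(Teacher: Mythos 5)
Your outline is correct and follows essentially the same route the paper intends: the paper gives no detailed proof, merely invoking ``standard methods in singularity theory'' (Golubitsky--Guillemin) together with Ando's result, and your argument---Mather's nice-dimensions reduction to infinitesimally stable (multi-)germs, the codimension bookkeeping that cuts off the Morin chain at $A_4$ and admits only the two nondegenerate $\Sigma^{2,2,0}$ (i.e.\ $D_4^{\pm}$) germs via Ando, and the multi-transversality enumeration of configurations of total codimension at most $4$ in $N$---is precisely that standard proof spelled out. The codimension counts and the list of thirteen multi-germs you obtain agree with the proposition.
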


We call a $D_4$ point 
a $\Sigma^{2,2,0}$ point as well.

\begin{rem}\label{rem:symmetry}
The normal forms for $D_4$ points
are slightly different from the usual ones
(see, for example, the article \cite{Ando} by Ando).
We have chosen them so that 
at an indefinite $D_4$ point, $f$ can be
represented as
$$(a, \eta, \zeta) \mapsto (a, \eta, \Im(\zeta^3) + 
\Re(\bar{\eta}\zeta) + a|\zeta|^2)$$
by using complex numbers,
where $i = \sqrt{-1}$, $\eta = b + ic$, $\zeta = x + iy$,
$\Im$ means the imaginary part, and $\Re$ means
the real part.

Set $\tau = \exp{(2 \pi i/3)}$. Then with respect
to the chosen coordinates, we have
$$f \circ \tilde{\varphi}_\tau = \varphi_\tau \circ f,$$
where $\tilde{\varphi}_\tau$ and $\varphi_\tau$ are
orientation preserving diffeomorphisms
defined by 
\begin{eqnarray*}
\tilde{\varphi}_\tau(a, \eta, \zeta)
& = & (a, \tau \eta, \tau \zeta), \quad \text{ and} \\
\varphi_\tau(X, Y + iZ, W) & = & (X, \tau (Y + iZ), W)
\end{eqnarray*}
respectively.
This shows that an indefinite $D_4$ point
(or a local fiber through an indefinite $D_4$
point)
has a (orientation preserving) symmetry of order $3$. 

Set $\tau' = \exp{(\pi i/3)}$ so that
we have ${\tau'}^2 = \tau$. Then we have
$$f \circ \tilde{\varphi}_{\tau'} = \varphi_{\tau'} \circ f,$$
where $\tilde{\varphi}_{\tau'}$ and $\varphi_{\tau'}$ are
diffeomorphisms
defined by 
\begin{eqnarray*}
\tilde{\varphi}_{\tau'}(a, \eta, \zeta)
& = & (-a, -\tau' \eta, \tau' \zeta), \quad \text{ and} \\
\varphi_\tau(X, Y + iZ, W) & = & (-X, -\tau' (Y + iZ), -W)
\end{eqnarray*}
respectively. Note that $\tilde{\varphi}_{\tau'}$
is orientation reversing while $\varphi_{\tau'}$
is orientation preserving. This shows
that an indefinite $D_4$ point
(or a local fiber through an indefinite $D_4$ point)
has a symmetry of order $6$ and that the
generator reverses the ``local orientation"
of the fiber. In fact,
we have $\tilde{\varphi}_{\tau} = \tilde{\varphi}_{\tau'}^2$
and $\varphi_\tau = \varphi_{\tau'}^2$.
\end{rem}

Let us recall the following definition
(for details, see \cite[Chapter~8]{Saeki04}).

\begin{dfn}\label{dfn:suspension}
Let $f \co M \to N$ be a proper smooth map between
manifolds. Then we call the proper smooth map
$$f \times \id_{\R} \co M \times \R \to N \times \R$$
the \emph{suspension} of $f$.
Furthermore, to the fiber of $f$ over a point $y
\in N$, we can associate the fiber of
$f \times \id_{\R}$
over $y \times \{0\}$. We say that the latter fiber is obtained
from the original fiber by the \emph{suspension}.
Note that a fiber and its suspension are
diffeomorphic to each other in the sense
of \fullref{dfn:diffeo}.
\end{dfn}

Note that the map germs (1)--(6) in 
\fullref{prop:5chara}
correspond to the suspensions of the map germs
in \fullref{fig3}. The map germs (11)--(13)
are as described in Figures~\ref{local1}--\ref{local3}
respectively,
where in order to draw $3$--dimensional objects
in a $4$--dimensional space, we have depicted
three ``sections" by $3$--dimensional spaces
for each object. 

\begin{figure}[ht!]
  \centering
  \includegraphics[width=10cm]{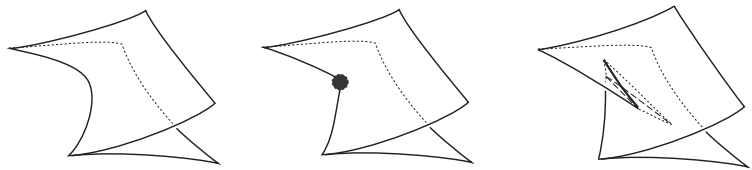}
  \caption{Map germ corresponding to a butterfly point}
  \label{local1}
\end{figure}

\begin{figure}[ht!]
  \centering
  \includegraphics[width=10cm]{\figdir/defD1-2}
  \caption{Map germ corresponding to a definite $D_4$ point}
  \label{local2}
\end{figure}

\begin{figure}[ht!]
  \centering
  \includegraphics[width=10cm]{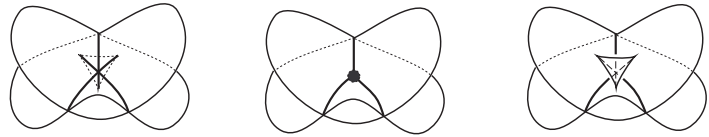}
  \caption{Map germ corresponding to an indefinite $D_4$ point}
  \label{local3}
\end{figure}

Let $q$ be a singular point of a proper $C^\infty$
stable map $f \co M \to N$
of a $5$--manifold $M$ into a $4$--manifold $N$.
Then, using the above local normal forms, we can easily
describe the diffeomorphism type of
a neighborhood of $q$ in $f^{-1}(f(q))$.
More precisely, we easily get the following local
characterizations of singular fibers.

\begin{lem}
Every singular point $q$ of a proper $C^\infty$
stable map $f\co M \to N$ of a $5$--manifold $M$ into 
a $4$--manifold $N$ has one of the following 
neighborhoods in its corresponding singular fiber\/ 
(see \fullref{local4}):
\begin{enumerate}
\item isolated point diffeomorphic to 
$\{ (x,y) \in \R^2\,:\, x^2+y^2=0 \}$, if $q$ is a definite fold point,
\item union of two transverse arcs diffeomorphic to 
$\{ (x,y) \in \R^2\,:\, x^2-y^2=0 \}$, if $q$ is an indefinite fold point,
\item $(2,3)$--cuspidal arc diffeomorphic to 
$\{ (x,y) \in \R^2 \,:\, x^3-y^2=0 \}$, if $q$ is a cusp point,
\item isolated point diffeomorphic to 
$\{ (x,y) \in \R^2 \,:\, x^4+y^2=0 \}$, if $q$ is a definite swallowtail point,
\item union of two tangent arcs diffeomorphic to 
$\{ (x,y) \in \R^2 \,:\, x^4-y^2=0 \}$, if $q$ is an indefinite swallowtail point,
\item $(2,5)$--cuspidal arc diffeomorphic to 
$\{ (x,y) \in \R^2 \,:\, x^5-y^2=0 \}$, if $q$ is a butterfly point,
\item non-disjoint union of an arc and a point diffeomorphic to 
$\{ (x,y) \in \R^2 \,:\,  3x^2y+y^3=0 \}$, if $q$ is a definite 
$D_4$ point,
\item union of three arcs meeting at a point with distinct
tangents diffeomorphic to 
$\{ (x,y) \in \R^2 \,:\, 3x^2y-y^3=0\}$, if $q$ is an indefinite 
$D_4$ point.
\end{enumerate}
\end{lem}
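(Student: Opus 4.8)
The plan is to read the statement directly off the local normal forms listed in \fullref{prop:5chara}(i). Fix a singular point $q$ and, using that proposition, choose local coordinates $(a,b,c,x,y)$ around $q$ and $(X,Y,Z,W)$ around $f(q)$ that put $f$ into one of the eight relevant normal forms; in each of them $q$ is the origin and $f(q)$ is the origin. Then the germ at $q$ of the singular fiber $f^{-1}(f(q))$ is the germ at $0\in\R^5$ of the common zero set of $X\circ f, Y\circ f, Z\circ f, W\circ f$. In every one of the eight normal forms the first three of these functions are $a$, $b$, $c$, so the fiber germ lies in the plane $\{a=b=c=0\}$, and on that plane $W\circ f$ restricts to the polynomial $g(x,y)$ obtained from the fourth slot of the normal form by setting $a=b=c=0$. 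Hence the fiber germ is the germ at the origin of the plane curve $\{g(x,y)=0\}\subset\R^2$, and all that remains is to identify this curve in each case.

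First I would dispose of items $(1)$--$(6)$. For a definite fold point $g=x^2+y^2$, whose real zero set is the single point $0$; for an indefinite fold point $g=x^2-y^2$, two transverse lines; for a cusp point $g=x^3-y^2$, a $(2,3)$--cuspidal arc; for a definite swallowtail $g=x^4+y^2$, again the single point; for an indefinite swallowtail $g=x^4-y^2$, the two arcs $y=\pm x^2$ tangent at the origin; for a butterfly point $g=x^5-y^2$, a $(2,5)$--cuspidal arc. In fact items $(1)$--$(5)$ require no computation at all: by \fullref{dfn:suspension} these normal forms are the suspensions of the ones in \fullref{fig3} used in \fullref{prop:stable}, and a fiber and its suspension are diffeomorphic, so the local fiber-germs are literally the ones from the $(4,3)$ case.

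The two $D_4$ cases deserve a sentence of care. For a definite $D_4$ point, $g=3x^2y+y^3=y(3x^2+y^2)$: over $\R$ the factor $3x^2+y^2$ vanishes only at the origin, so the real zero set of $g$ is the line $\{y=0\}$ with the origin as a distinguished point on it, i.e.\ the ``non-disjoint union of an arc and a point'' of item $(7)$. For an indefinite $D_4$ point, $g=3x^2y-y^3=y(3x^2-y^2)$: the real zero set is the three lines $y=0$ and $y=\pm\sqrt{3}\,x$ through the origin, with pairwise distinct tangents, which is item $(8)$. Together with the previous paragraph this exhausts the list; the precise local pictures, including how each configuration of arcs and points sits inside the total space, are the ones drawn in \fullref{local4}.

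I do not expect a genuine obstacle: the whole content is already packaged in the explicit normal forms of \fullref{prop:5chara}(i), and the proof is essentially a transcription of those forms into the language of plane curve germs. The one thing worth highlighting is the contrast between the two $D_4$ cases --- why the definite one yields an arc with a marked point whereas the indefinite one yields an honest triple point --- which is exactly the elementary real-versus-complex phenomenon that $3x^2+y^2$ has no real linear factor while $3x^2-y^2$ splits over $\R$.
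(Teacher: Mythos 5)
Your argument is correct and is exactly the one the paper intends: the lemma is stated there as an immediate consequence of the normal forms in \fullref{prop:5chara}\,(i) (no written proof is given), and your reduction of the fiber germ to the plane curve $\{g(x,y)=0\}$ with $a=b=c=0$, including the factorizations $y(3x^2\pm y^2)$ in the two $D_4$ cases, is precisely that intended verification.
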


\begin{figure}[ht!]
\labellist\small
\pinlabel {$(1)$} at 44 652
\pinlabel {$(2)$} at 216 652
\pinlabel {$(3)$} at 380 652
\pinlabel {$(4)$} at 520 652
\pinlabel {$(5)$} at 44 484
\pinlabel {$(6)$} at 216 484
\pinlabel {$(7)$} at 380 484
\pinlabel {$(8)$} at 520 484
\endlabellist
\includegraphics[width=10cm]{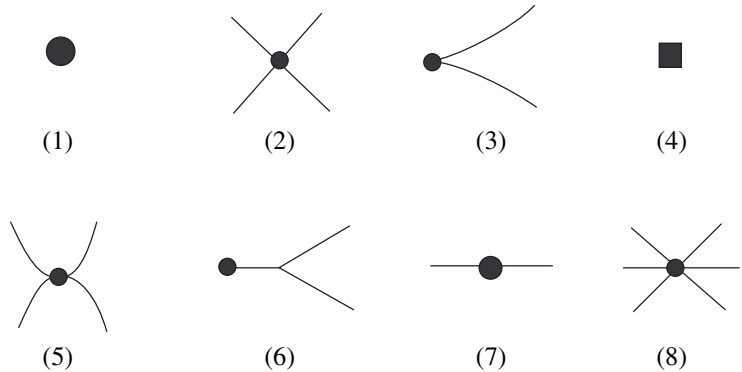}
\caption{Neighborhood of a singular point in a singular fiber}
\label{local4}
\end{figure}

We note that in \fullref{local4}, both the black dot 
(1) and the black square (4)
represent an isolated point;
however, we use distinct symbols in order to distinguish them.
We also use the symbols as in \fullref{local4} (3) and
(6) in order to distinguish a $(2, 3)$--cusp from a
$(2, 5)$--cusp. Furthermore,
we put a dot on the arc as in \fullref{local4} (7)
in order to distinguish it from a regular fiber.

Then by an argument similar to that in
\cite[Chapter~3]{Saeki04}, we can prove the
following, whose proof is left to the reader.

\begin{thm}\label{thm:5classification}
Let $f \co M \to N$ be a proper $C^\infty$ stable map
of an orientable $5$--manifold $M$ into
a $4$--manifold $N$. Then, every singular fiber of $f$
is $C^0$ equivalent modulo regular fibers to 
one of the fibers as follows:
\begin{enumerate}
\item The suspension of a fiber appearing in
\fullref{thm:4classification}
\item One of the disconnected fibers\\
$\mathrm{IV}^{0,0,0,0}$, $\mathrm{IV}^{0,0,0,1}$, 
$\mathrm{IV}^{0,0,1,1}$, $\mathrm{IV}^{0,1,1,1}$, 
$\mathrm{IV}^{1,1,1,1}$, $\mathrm{IV}^{0,0,2}$, 
$\mathrm{IV}^{0,1,2}$, $\mathrm{IV}^{1,1,2}$, 
$\mathrm{IV}^{0,0,3}$, $\mathrm{IV}^{0,1,3}$, 
$\mathrm{IV}^{1,1,3}$, $\mathrm{IV}^{0,4}$, 
$\mathrm{IV}^{0,5}$, $\mathrm{IV}^{0,6}$, 
$\mathrm{IV}^{0,7}$, $\mathrm{IV}^{0,8}$, 
$\mathrm{IV}^{1,4}$, 
$\mathrm{IV}^{1,5}$, $\mathrm{IV}^{1,6}$, 
$\mathrm{IV}^{1,7}$, $\mathrm{IV}^{1,8}$, 
$\mathrm{IV}^{2,2}$, 
$\mathrm{IV}^{2,3}$, $\mathrm{IV}^{3,3}$, 
$\mathrm{IV}^{0,0,a}$, $\mathrm{IV}^{0,1,a}$, 
$\mathrm{IV}^{1,1,a}$, 
$\mathrm{IV}^{0,b}$, $\mathrm{IV}^{1,b}$, 
$\mathrm{IV}^{2,a}$, $\mathrm{IV}^{3,a}$, 
$\mathrm{IV}^{a,a}$, 
$\mathrm{IV}^{0,c}$, $\mathrm{IV}^{0,d}$, 
$\mathrm{IV}^{0,e}$, $\mathrm{IV}^{1,c}$, 
$\mathrm{IV}^{1,d}$, 
$\mathrm{IV}^{1,e}$
\item One of the connected fibers
depicted in \fullref{fig5}
\end{enumerate}
Furthermore, no two fibers appearing in 
the list $(1)$--$(3)$ above
are $C^0$ equivalent 
modulo regular fibers.
\end{thm}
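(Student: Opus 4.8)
The plan is to adapt, essentially verbatim, the argument of \cite[Chapter~3]{Saeki04} establishing the analogous classification in the $(4,3)$ case; the only new features are that up to \emph{four} singular points may now lie over a single point (rather than three) and that two new local singularity types occur, the butterfly and $D_4$ points of \fullref{prop:5chara}\,(i). Two assertions must be checked: (A) every singular fiber of $f$ is $C^0$ equivalent modulo regular fibers to one of the fibers in $(1)$--$(3)$, and (B) no two fibers in the list are so equivalent.

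For (A), fix $y\in N$ with $f^{-1}(y)$ singular and put $\Sigma(y)=S(f)\cap f^{-1}(y)$. By \fullref{prop:5chara}\,(ii) this is a finite set of at most four points; off $\Sigma(y)$ the fiber is a compact $1$--manifold, and near each point of $\Sigma(y)$ it coincides with one of the eight local models of \fullref{local4}, read off from the normal forms of \fullref{prop:5chara}\,(i). Thus $f^{-1}(y)$ is a compact graph whose vertices are the points of $\Sigma(y)$, decorated by singularity type and local fiber model, whose edges and loops are the arcs and circles of the $1$--manifold part, and which carries the co-orientation data of the fold/cusp/$\ldots$ sheets incident to each vertex. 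Invoking \fullref{dfn:modulo-regular} one discards the connected components containing no singular point and replaces the arcs between, and loops at, singular points by canonical ones; this reduces $f^{-1}(y)$, up to $C^0$ equivalence modulo regular fibers, to one of finitely many combinatorial fiber types. A combinatorial type is realized by some proper $C^\infty$ stable map exactly when the induced multi-germ of $f|_{S(f)}$ over $y$ is one of the thirteen of \fullref{prop:5chara}\,(ii) and the local co-orientations patch together consistently; as in \cite[Chapter~3]{Saeki04}, a case check against these thirteen multi-germs both excludes the combinatorial types not in the list and exhibits each listed type.

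The enumeration runs over the number of connected components of $f^{-1}(y)$ containing singular points and, within each, over the number and types of such points, the total being at most four.
\begin{enumerate}
\item[$\bullet$] A single component with a single singular point is the suspension (\fullref{dfn:suspension}) of one of the connected one-singular-point fibers of \fullref{thm:4classification} when that point is a fold, cusp, or swallowtail point, and is one of the new connected fibers of \fullref{fig5} when it is a butterfly, definite $D_4$, or indefinite $D_4$ point (local model a $(2,5)$--cuspidal arc, a dotted arc, or a triple point, cf.\ \fullref{local4}\,(6)--(8)).
\item[$\bullet$] A single component with several singular points gives the remaining connected fibers of \fullref{fig5}: the suspensions of the connected multi-singular fibers of \fullref{thm:4classification}, together with genuinely new connected fibers of codimension four, such as those carrying four indefinite fold points, two cuspidal edges, a swallowtail crossed by a fold, or a $D_4$ point crossed by folds, none of which can occur in the $(4,3)$ setting.
\item[$\bullet$] If the singular points are spread over several components, $f^{-1}(y)$ is a disjoint union of pieces of the above kind, restricted only by the bound of four singular points and by \fullref{prop:5chara}\,(ii); this yields the suspensions of the disconnected fibers of \fullref{thm:4classification} and, as the genuinely new contributions, exactly the disconnected fibers whose constituents have codimensions summing to four --- the $\mathrm{IV}$--series of $(2)$, the four splittings $1{+}1{+}1{+}1$, $1{+}1{+}2$, $1{+}3$, $2{+}2$ of the codimension accounting respectively for $\mathrm{IV}^{0,0,0,0},\dots,\mathrm{IV}^{1,1,1,1}$, for those pairing two folds with a codimension-two fiber, for those pairing a fold with a codimension-three fiber, and for $\mathrm{IV}^{2,2},\dots,\mathrm{IV}^{a,a}$. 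These are absent from \fullref{thm:4classification} simply because its target is three-dimensional.
\end{enumerate}

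For (B), the number of connected components, the multiset of singularity types present on each (distinguished by the eight local models of \fullref{local4}: isolated point, transverse arcs, $(2,3)$-- and $(2,5)$--cuspidal arcs, tangent arcs, dotted arc, triple point), and the combinatorial type of the graph cut out by the singular points and the connecting arcs are all invariants of $C^0$ equivalence modulo regular fibers, and comparing these distinguishes the entries of the list pairwise --- routine but lengthy, exactly as in \cite[Chapter~3]{Saeki04}. The principal obstacle lies in the exhaustiveness claim within (A): one must be sure that the list of combinatorial fiber types really is complete and that each of its members is genuinely realizable by a proper $C^\infty$ stable map, which means checking the thirteen multi-germ types of \fullref{prop:5chara}\,(ii) against every way of gluing the eight local models --- appreciably more bookkeeping than in the $(4,3)$ case. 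The $D_4$ points call for special care: as recorded in \fullref{rem:symmetry}, the local fiber through an indefinite $D_4$ point carries an order-$6$ symmetry whose generator reverses its local orientation, and three arcs meet there, so one must verify both that this symmetry identifies no two entries of the list and that the adjacencies linking the $D_4$, butterfly, and lower-codimension strata are exactly those drawn in \fullref{fig5}.
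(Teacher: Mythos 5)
Your outline follows the same route the paper itself indicates (it offers no argument beyond ``similar to \cite[Chapter~3]{Saeki04}'', resting on \fullref{prop:5chara} and the eight local models of \fullref{local4}), and your bookkeeping is consistent with it: at most four singular points, butterfly and $D_4$ points accounting for the new one--point fibers, and the splittings $1{+}1{+}1{+}1$, $1{+}1{+}2$, $1{+}3$, $2{+}2$ of the codimension accounting for the disconnected list (2). However, there is a genuine gap at the step you present as automatic: the claim that the decorated graph (your ``combinatorial fiber type''), together with the requirements that the multi-germ of $f|_{S(f)}$ be one of the thirteen of \fullref{prop:5chara}\,(ii) and that the sheet co-orientations patch, pins down the $C^0$ class of the germ modulo regular fibers and excludes everything not in (1)--(3). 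Nothing in this criterion uses the orientability of $M$, which is an essential hypothesis: already in the $(4,3)$ case the classification over possibly nonorientable $4$--manifolds \cite{YT1,YT2} is strictly larger, the extra types having the same underlying labelled fiber complexes and perfectly admissible multi-germs of $f|_{S(f)}$, but ``twisted'' germs along the fiber (the model case is the saddle fiber of a Morse function on a M\"obius band versus an annulus: same figure--eight fiber, same local model, inequivalent germs). Suspending such a fiber, or twisting a codimension--$4$ gluing, yields $(5,4)$ germs that your stated criterion cannot exclude but that are absent from the list; so the exclusion step would fail as written, and the implication ``same combinatorial type, hence $C^0$ equivalent modulo regular fibers'' is false without orientability --- that implication is where the real content of the theorem lies.

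Concretely, orientability must enter in two places your sketch does not mention. For fibers of codimension at most $3$ (part (1)), the efficient route is the slice argument of the Lemma in \fullref{section2}: $y$ lies on a positive--dimensional stratum, the second isotopy lemma exhibits the germ as a suspension of the germ of $f$ restricted over a transversal disk, and that preimage is an \emph{orientable} $4$--manifold (trivial normal bundle inside the orientable $M$), so that \fullref{thm:4classification} applies; without orientability of the slice you could not quote it, and indeed part (1) would be false. For the codimension--$4$ fibers, when you standardize the product structure over the regular arcs and circles of $f^{-1}(y)$ and glue it to the eight local models, the gluing data includes possible orientation--reversing twists along cycles of the graph, and it is the orientability of $f^{-1}(U)$, inherited from $M$, that forces the untwisted gluings and hence the completeness of \fullref{fig5}. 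With this made explicit, the rest of your outline --- realization of each listed type, and pairwise distinctness via the number of components, the singularity types and the graph type, supplemented where needed by the numbers of components of nearby regular fibers --- is the intended argument of \cite[Chapter~3]{Saeki04}.
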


\begin{figure}[htp!]
\centering
\labellist\small
\pinlabel {$\mathrm{IV}^9$} at -250 522
\pinlabel {$\mathrm{IV}^{10}$} at -35 522
\pinlabel {$\mathrm{IV}^{11}$} at 190 522
\pinlabel {$\mathrm{IV}^{12}$} at 405 522
\pinlabel {$\mathrm{IV}^{13}$} at 600 522
\pinlabel {$\mathrm{IV}^{14}$} at 870 522
\pinlabel {$\mathrm{IV}^{15}$} at -225 135
\pinlabel {$\mathrm{IV}^{16}$} at -10 135
\pinlabel {$\mathrm{IV}^{17}$} at 205 135
\pinlabel {$\mathrm{IV}^{18}$} at 410 135
\pinlabel {$\mathrm{IV}^{19}$} at 640 135
\pinlabel {$\mathrm{IV}^{20}$} at 900 135
\pinlabel {$\mathrm{IV}^{21}$} at -245 -245
\pinlabel {$\mathrm{IV}^{22}$} at 15 -245
\pinlabel {$\mathrm{IV}^f$} at 230 -245
\pinlabel {$\mathrm{IV}^g$} at 440 -245
\pinlabel {$\mathrm{IV}^h$} at 685 -245
\pinlabel {$\mathrm{IV}^i$} at 925 -245
\pinlabel {$\mathrm{IV}^j$} at -205 -620
\pinlabel {$\mathrm{IV}^k$} at 0 -620
\pinlabel {$\mathrm{IV}^l$} at 240 -620
\pinlabel {$\mathrm{IV}^m$} at 500 -620
\pinlabel {$\mathrm{IV}^n$} at 735 -620
\pinlabel {$\mathrm{IV}^o$} at 960 -620
\pinlabel {$\mathrm{IV}^p$} at -200 -970
\pinlabel {$\mathrm{IV}^q$} at 75 -970
\endlabellist
\includegraphics[scale=0.27]{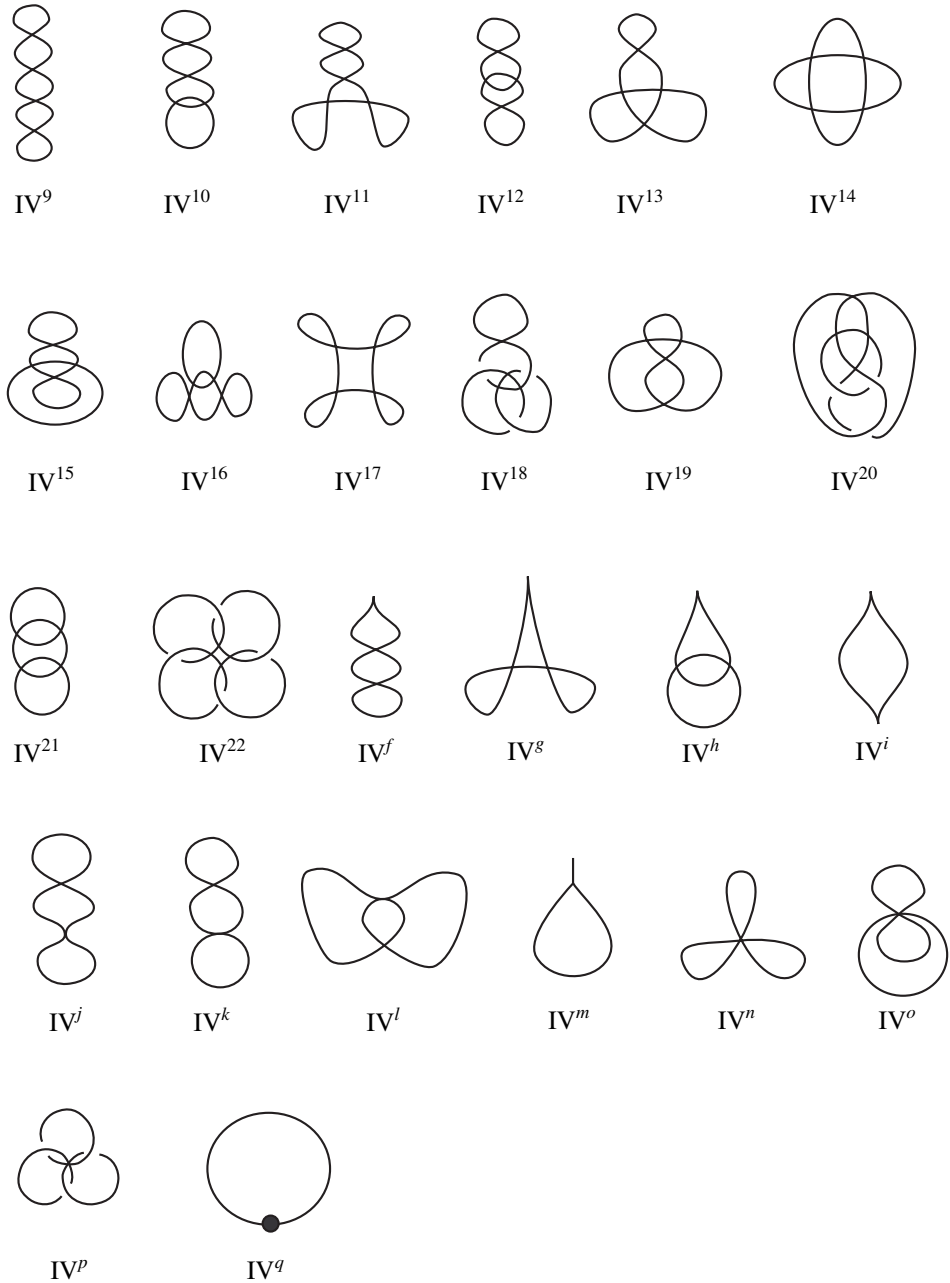}
\caption{List of codimension $4$ connected singular fibers of proper
$C^\infty$ stable maps of orientable $5$--manifolds into $4$--manifolds}
\label{fig5}
\end{figure}

For the fibers in \fullref{thm:5classification}
(1), we use the same names as those of the
corresponding fibers in \fullref{thm:4classification}.
Note that the names of the fibers are 
consistent with the convention mentioned
in \fullref{rmk:name}.
Therefore, the figure corresponding to each fiber
listed in \fullref{thm:5classification} (2)
can be obtained by taking the disjoint union
of the fibers in \fullref{fig4} corresponding to
the digits or letters appearing in the superscript.
For example, the figure for the fiber $\mathrm{IV}^{0,0,0,1}$
consists of three dots and a ``figure $8$".

In \fullref{fig5}, we did not use ``$/$" as in
\fullref{fig4}, since the depicted fibers are
all connected and are easy to recognize.

Note also that the codimensions of
the fibers in \fullref{thm:5classification} (1)
coincide with those of the corresponding
fibers in \fullref{thm:4classification}.
Furthermore, the fibers in \fullref{thm:5classification} (2)
and (3) all have codimension $4$.

\begin{rem}
The result of \fullref{thm:5classification}
holds for the classification up to
$C^\infty$ equivalence as well. As a consequence, we see
that two fibers are $C^0$ equivalent if and only
if they are $C^\infty$ equivalent
(for related results, refer to \cite[Chapter~3]{Saeki04}).
This should be compared with a result of
Damon \cite{Damon} about stable map
germs in nice dimensions.
\end{rem}

\section{Chiral singular fibers and their signs}\label{section5}

In this section we determine those singular fibers
of proper stable maps of oriented $4$--manifolds
into $3$--manifolds which are chiral. 
We also define a sign $(= \pm 1)$ for
each chiral singular fiber of codimension $3$.

Let us first consider a fiber of type
$\mathrm{III}^8$.
Let $f \co (M, f^{-1}(y)) \to (N, y)$ be
a map germ representing the fiber of type $\mathrm{III}^8$
with $f^{-1}(y)$ being connected, where $M$ is an
orientable $4$--manifold and $N$ is a $3$--manifold. 
We assume that $M$ is
oriented. Let us denote the three
singular points of $f$ contained in $f^{-1}(y)$ by
$q_1, q_2$ and $q_3$. 

Let us fix an orientation of a neighborhood
of $y$ in $N$. Then for every regular
point $q \in f^{-1}(y)$, we can define the
local orientation of the fiber near $q$
by the ``fiber first" convention;
that is, we give the orientation
to the fiber at $q$ so that the ordered
$4$--tuple $\langle v, v_1, v_2, 
v_3\rangle$ of tangent vectors at $q$
gives the orientation of $M$, where
$v$ is a tangent vector of the fiber
at $q$ which corresponds to its orientation,
and $v_1$, $v_2$ and $v_3$ are tangent vectors
of $M$ at $q$ such that the ordered
$3$--tuple $\langle df_q(v_1), df_q(v_2), df_q(v_3)\rangle$ 
corresponds to the local orientation of $N$ at $y$.
Note that the set of regular points
in $f^{-1}(y)$ consists of six open arcs
and each of them gets its orientation by
the above rule. 

Each singular point $q_i$ is incident
to four open arcs. We see easily that
their orientations should be as depicted
in \fullref{local-ori} by considering
the orientations induced on the nearby fibers.

\begin{figure}[ht!]
  \centering
\labellist\small
\pinlabel {$q_i$} at 366 563
\endlabellist
\includegraphics[scale=0.3]{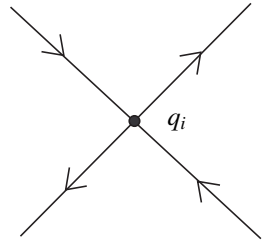}
\caption{Orientations of the four arcs incident to a singular point}
\label{local-ori}
\end{figure}

For each pair $(q_i, q_j)$, $i \neq j$, of
singular points, we
have exactly two open arcs of $f^{-1}(y)$ which connect
$q_i$ and $q_j$. Furthermore, the orientations
of the two open arcs coincide with each other
in the sense that
one of the two arcs goes from $q_i$ to $q_j$ if and
only if so does the other one.
Then we see that the orientations on the
six open arcs define a cyclic order of
the three singular points $q_1, q_2$ and $q_3$
(see \fullref{3ring1}).
By renaming the three singular points if necessary, 
we may assume that
this cyclic order is given by $\langle q_1,
q_2, q_3\rangle$.

\begin{figure}[ht!]
  \centering
\labellist\small
\hair=5pt
\pinlabel {$q_1$} [b] at 299 625
\pinlabel {$q_2$} [tr] at 260 544
\pinlabel {$q_3$} [tl] at 335 544
\endlabellist
\includegraphics[scale=0.3]{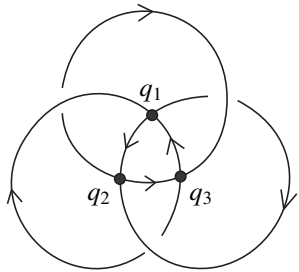}
\caption{Cyclic order of the three singular points}
\label{3ring1}
\end{figure}

Let $D_i$ be a sufficiently
small open disk neighborhood
of $q_i$ in $S(f)$. Since the multi-germ
$(f|_{S(f)}, f^{-1}(y) \cap S(f))$
corresponds to the triple point
as depicted in \fullref{fig3} (4),
the images $f(D_1), f(D_2)$ and $f(D_3)$
are open $2$--disks in $N$ in general
position forming a triple point at $y$.
They divide a neighborhood of $y$ in $N$
into eight octants. For each octant $\omega$,
take a point in it and count the number
of connected components of the
regular fiber over the point. It should
be equal either to $1$ or to $2$ and
it does not depend on the choice of the
point (for details, see \cite[Figure~3.6]{Saeki04}). 
When it is
equal to $k$ $(=1, 2)$, we call $\omega$
a \emph{$k$--octant}.

Choose a $1$--octant $\omega$.
Let $w_i$ be a normal
vector to $f(D_i)$ in $N$ pointing toward
$\omega$ at a point
incident to that octant, $i = 1, 2, 3$
(see \fullref{triple0}).

\begin{figure}[ht!]
\centering
\labellist\small
\pinlabel {$f(D_1)$} [bl] at 434 708
\pinlabel {$f(D_2)$} [br] at 109 696
\pinlabel {$f(D_3)$} [r] at 121 400
\pinlabel {$w_1$} [b] at 323 625
\pinlabel {$w_2$} [t] at 217 584
\pinlabel {$w_3$} [l] at 298 521
\pinlabel {$y$} [l] at 457 638
\pinlabel {$\omega$} [br] at 46 520
\endlabellist
\includegraphics[scale=0.4]{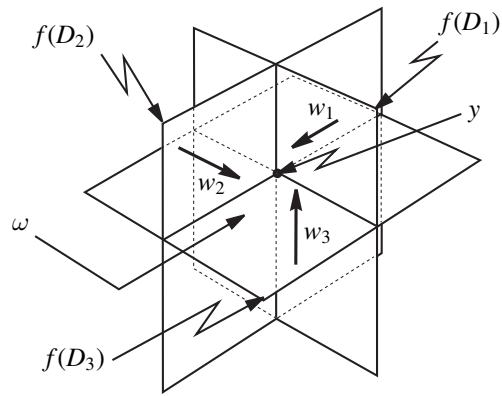}
\caption{Vectors $w_i$ normal to $f(D_i)$
pointing toward $\omega$}
\label{triple0}
\end{figure}

We may identify a neighborhood of $y$ in $N$ with $\R^3$.
Then the local orientation at $y$
corresponding to the ordered
$3$--tuple of vectors $\langle w_1, w_2, w_3\rangle$
depends only on the cyclic order of the
three open disks $f(D_1)$, $f(D_2)$ and $f(D_3)$
and is well-defined, once a $1$--octant is chosen.
Then we say that the fiber $f^{-1}(y)$
is \emph{positive} if the orientation
corresponding to $\langle w_1, w_2, w_3\rangle$
coincides with the local orientation of $N$
at $y$ which we chose at the beginning; otherwise, \emph{negative}.
We define the \emph{sign} of the fiber
to be $+1$ (or $-1$) if it is positive
(respectively, negative).

\begin{lem}
The above definition does not depend
on the choices of the following data, and
the sign of a $\mathrm{III}^8$ type fiber
is well-defined as long as the source
$4$--manifold is oriented:
\begin{itemize}
\item[$(1)$] the $1$--octant $\omega$,
\item[$(2)$] the local orientation of $N$ at $y$.
\end{itemize}
\end{lem}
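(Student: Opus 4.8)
The plan is to establish the two independence claims (1) and (2) separately, in each case by carefully tracking orientations. Recall from the discussion preceding the lemma that only the \emph{cyclic} order of the disks $f(D_1),f(D_2),f(D_3)$ enters the orientation $\langle w_1,w_2,w_3\rangle$, so a cyclic relabelling of $q_1,q_2,q_3$ changes nothing and may be ignored, and recall that $f^{-1}(y)$ consists of $q_1,q_2,q_3$ together with six regular arcs, exactly two joining each pair $q_i,q_j$.

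For (2), I would start from the observation that replacing the chosen local orientation of $N$ at $y$ by its opposite reverses the ``fiber first'' orientation of \emph{every} regular arc of $f^{-1}(y)$: in $\langle v,v_1,v_2,v_3\rangle$ realising the orientation of $M$ with $\langle df_q(v_1),df_q(v_2),df_q(v_3)\rangle$ realising the old orientation of $N$, passing to a frame $(v_1,v_2,v_3)$ realising the reversed orientation of $N$ is an odd change, so $v$ must be reversed in order to keep the (fixed) orientation of $M$. Hence the cyclic order of $q_1,q_2,q_3$ read off from these arc orientations is reversed, and restoring the normalisation that this cyclic order be $\langle q_1,q_2,q_3\rangle$ forces a relabelling of the $q_i$ by a \emph{transposition}. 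That relabelling interchanges two of the disks $D_i$, hence transposes two of the vectors $w_i$, hence reverses $\langle w_1,w_2,w_3\rangle$; but the reference orientation of $N$ has also been reversed, so the comparison defining the sign is unchanged. Thus the sign is independent of (2).

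For (1), fix the orientation of $N$ and, for an octant $\omega$, let $w_i(\omega)$ denote the normal of $f(D_i)$ pointing into $\omega$. Since $w_i(\omega)$ depends only on which side of $f(D_i)$ the octant $\omega$ lies on, for two octants $\omega,\omega'$ one has $\langle w_1(\omega'),w_2(\omega'),w_3(\omega')\rangle=(-1)^{d}\langle w_1(\omega),w_2(\omega),w_3(\omega)\rangle$, where $d=d(\omega,\omega')$ is the number of the disks $f(D_i)$ separating $\omega$ from $\omega'$. So it suffices to show that any two $1$-octants are separated by an \emph{even} number of the disks, and for that it is enough to know that crossing a single disk $f(D_i)$ changes the number of connected components of the regular fiber by exactly $\pm 1$ (then ``component count mod $2$'' differs by $1$ across each disk, so all $1$-octants lie in a single such parity class, forcing $d$ even for any two of them). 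This parity statement I would verify by direct inspection of the $\mathrm{III}^8$ fiber: near $y$ that fiber is the graph on vertices $q_1,q_2,q_3$ with each pair joined by a double edge, an octant is recorded by choosing at each vertex one of the two admissible smoothings of the transverse crossing there (the two sides of the fold locus $f(D_i)$), and crossing $f(D_i)$ switches exactly the smoothing at $q_i$; tracing the resulting closed $1$--manifolds (or simply reading off \cite[Figure~3.6]{Saeki04}) shows that the four $1$-octants are precisely the ``odd'' smoothing patterns and the four $2$-octants the ``even'' ones. Hence the sign is independent of (1), and together with the previous paragraph this proves the lemma.

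The bulk of the work is orientation bookkeeping, and the point to watch is in part (2): one must check that reversing the orientation of $N$ forces a transposition (an odd permutation) — not a cyclic one — of $q_1,q_2,q_3$, so that $\langle w_1,w_2,w_3\rangle$ genuinely flips and this flip cancels that of the reference orientation. The one place where the specific fiber $\mathrm{III}^8$ is used is the (elementary but necessary) verification that crossing each $f(D_i)$ toggles the parity of the number of components of the regular fiber; beyond getting the signs right, I do not expect a serious obstacle.
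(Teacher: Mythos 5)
Your argument is correct and is essentially the paper's own proof: for (1) you reduce to the fact that $1$--octants and $2$--octants alternate across the sheets $f(D_i)$ (justified, as in the paper, by inspecting the $\mathrm{III}^8$ fiber / \cite[Figure~3.6]{Saeki04}) together with the observation that crossing one sheet flips exactly one of the $w_i$, and for (2) you note that reversing the local orientation of $N$ reverses the fiber orientations, hence the cyclic order of $q_1,q_2,q_3$ and therefore the orientation class of $\langle w_1,w_2,w_3\rangle$, which cancels against the reversal of the reference orientation. Your extra bookkeeping (the Hamming-parity phrasing in (1) and the explicit transposition in (2)) only makes explicit what the paper leaves implicit, so there is no substantive difference.
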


\begin{proof}
$(1)$ It is easy to see that
any two adjacent octants have distinct
numbers of connected components of their
associated regular fibers; that is, a $1$--octant
is adjacent to $2$--octants, but never to
another $1$--octant, and vice versa. Therefore, in order to
move from the chosen $1$--octant to another
$1$--octant,
one has to cross the open disks $f(D_1),
f(D_2)$ and $f(D_3)$ even number of times.
Every time one crosses an open disk,
the associated normal vector corresponding to
that open disk changes the direction, while
the other two vectors remain parallel.
Therefore, after crossing the open disks
even number of times, we get the
same orientation determined by the associated
ordered normal vectors.

$(2)$ If we reverse the local orientation
of $N$ at $y$, then the regular parts
of fibers get opposite orientations.
Therefore, in the above definition,
the cyclic order of the three singular
points is reversed. Hence, the resulting
local orientation at $y$ determined by the
three normal vectors is also reversed.
Thus, the sign of the fiber is well-defined.
\end{proof}

For a fiber which is a disjoint union
of a $\mathrm{III}^8$ type fiber and a finite
number of copies of a fiber of the trivial
circle bundle (that is, for a fiber
equivalent to a $\mathrm{III}^8$ type fiber
modulo regular fibers), we say that it is positive
(respectively, negative) if the $\mathrm{III}^8$--fiber
component is positive (respectively, negative).
We define the \emph{sign} of such a fiber
to be $+1$ (or $-1$) if it is positive
(respectively, negative).

\begin{rem}\label{rem:ori-rev}
It should be noted that if we reverse the
orientation of the source $4$--manifold, then
the sign of a $\mathrm{III}^8$ type fiber necessarily
changes.
\end{rem}

\begin{cor}
A fiber equivalent to a $\mathrm{III}^8$ type fiber
modulo regular fibers is always chiral.
\end{cor}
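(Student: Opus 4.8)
The plan is to reduce the statement to the single connected fiber of type $\mathrm{III}^8$ and then to play the well-definedness of its sign against \fullref{rem:ori-rev}. Since every compact $1$--manifold admits an orientation-reversing homeomorphism, chirality is an invariant of the $C^0$ equivalence class modulo regular fibers, as observed at the end of \fullref{section2}; so it suffices to show that a map germ $f \co (M, f^{-1}(y)) \to (N, y)$ representing the connected $\mathrm{III}^8$ fiber, with $M$ an oriented $4$--manifold and $N$ a $3$--manifold, is chiral. Because the codimension of $\mathrm{III}^8$ equals $3 = \dim N$, the note following \fullref{dfn:achiral} says that achirality of $\mathrm{III}^8$ would amount precisely to the existence of homeomorphisms $\tilde{\varphi}$ and $\varphi$ making \eqref{eq:chiral} commute with $\tilde{\varphi}$ orientation-reversing, the condition on \eqref{eq:base} being vacuous in this codimension.

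So suppose such $\tilde{\varphi}$ and $\varphi$ exist. By the preceding Lemma on the well-definedness of the sign, the sign $\varepsilon \in \{+1, -1\}$ of the $\mathrm{III}^8$ fiber of $f$ is a function of $f$ and the orientation of $M$ alone: it is independent of the chosen $1$--octant and of the chosen local orientation of $N$ at $y$. Now $\tilde{\varphi}$, together with $\varphi$, conjugates $f$ near $f^{-1}(y)$ to itself; regarding $\tilde{\varphi}$ as an \emph{orientation-preserving} homeomorphism from $M$ with its given orientation to $M$ with the reversed orientation, I would transport along $\tilde{\varphi}$ and $\varphi$ the data used to define the sign and verify that the octant decomposition near $y$, a $1$--octant, the disks $f(D_i)$ together with their cyclic order, and the outward normal vectors $w_i$ are all carried to the corresponding data computed with respect to the reversed orientation of $M$. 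It would follow that the sign of the $\mathrm{III}^8$ fiber computed with the given orientation equals the one computed with the reversed orientation. But \fullref{rem:ori-rev} asserts that these two signs are opposite, so $\varepsilon = -\varepsilon$, which is absurd. Hence $\mathrm{III}^8$ is chiral, and therefore so is every fiber equivalent to it modulo regular fibers.

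The one step that needs genuine care is the transport-of-data claim in the second paragraph, namely that an orientation-preserving homeomorphism conjugating one $\mathrm{III}^8$ germ to another sends the sign-defining data of the first to that of the second. This is exactly where the independence statements in the preceding Lemma are used: they allow me first to carry the chosen local orientation of $N$ along $\varphi$, so that the ``fiber-first'' orientations of the six regular arcs correspond under $\tilde{\varphi}$, and hence so do the induced cyclic orders of the three singular points and of the three disks $f(D_i)$; once that is set up, matching the normal vectors $w_i$ and comparing the resulting orientation of $N$ with the chosen one is routine bookkeeping. Everything else in the argument is immediate from the definitions.
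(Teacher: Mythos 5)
Your argument is correct and is essentially the paper's own proof: both assume achirality, reinterpret the orientation-reversing $\tilde{\varphi}$ as an orientation-preserving equivalence between the germ and its copy with reversed source orientation, transport the local target orientation via $\varphi$, and derive a contradiction between the invariance of the sign under such equivalences and \fullref{rem:ori-rev}. The paper merely compresses your transport-of-data step into the phrase ``by our definition of the sign,'' so no substantive difference remains.
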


\begin{proof}
If it is achiral, then a representative
of a $\mathrm{III}^8$ type singular fiber
and its copy with the orientation of the
source $4$--manifold being reversed are
$C^0$ equivalent with respect to an
orientation preserving homeomorphism
between the sources
(that is, with respect to a homeomorphism $\tilde{\varphi}$
as in the diagram \eqref{eq:chiral}).
Let us take
local orientations at the target points
so that the homeomorphism between the target
manifolds (that is, the homeomorphism
$\varphi$ in the diagram \eqref{eq:chiral})
preserves the orientation.
Then by our definition of the sign,
we see that the two $\mathrm{III}^8$ type fibers
should have the same sign, which is a contradiction
in view of \fullref{rem:ori-rev}.
Therefore, the desired conclusion follows.
This completes the proof.
\end{proof}

Let us now consider the other singular fibers
appearing in \fullref{thm:4classification}.
By using similar arguments, we can determine the
chiral singular fibers among the list.
More precisely, we have the following.

\begin{prop}\label{prop:4chiral}
A singular fiber of a proper $C^\infty$
stable map of an oriented $4$--manifold
into a $3$--manifold is chiral
if and only if it contains a fiber
of type
$\mathrm{III}^5$, $\mathrm{III}^7$ or
$\mathrm{III}^8$.
\end{prop}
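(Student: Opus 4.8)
The plan is to go through the list of \fullref{thm:4classification}. First I would reduce to connected fibers: modulo regular fibers, every singular fiber is a disjoint union, in general position in the target, of finitely many of the \emph{connected} singular fibers $\mathrm{I}^0,\mathrm{I}^1$ (codimension $1$), $\mathrm{II}^2,\mathrm{II}^3,\mathrm{II}^a$ (codimension $2$), and $\mathrm{III}^4,\ldots,\mathrm{III}^8,\mathrm{III}^b,\mathrm{III}^c,\mathrm{III}^d,\mathrm{III}^e$ (codimension $3$), together with regular circles. Every regular circle is achiral, and arguing as in the remarks preceding this proposition, a disjoint union is achiral when all of its summands are and is chiral once it has a chiral summand; hence a singular fiber is chiral exactly when one of its connected components is a chiral connected singular fiber. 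Since among the fibers of \fullref{fig4} the only ones having a $\mathrm{III}^5$, $\mathrm{III}^7$ or $\mathrm{III}^8$ component are $\mathrm{III}^5$, $\mathrm{III}^7$, $\mathrm{III}^8$ themselves, it is enough to show that, among the fourteen connected singular fibers above, the chiral ones are precisely $\mathrm{III}^5$, $\mathrm{III}^7$ and $\mathrm{III}^8$.

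To show the other eleven are achiral I would exhibit, for each, an explicit orientation-reversing self-equivalence $(\tilde\varphi,\varphi)$ of a representative germ $f$. For the five fibers of codimension $\le 2$ this is immediate from the normal forms of \fullref{prop:stable}: for a fold fiber ($\mathrm{I}^0$, $\mathrm{I}^1$) take the coordinate swap $\tilde\varphi(x,y,z,w)=(x,y,w,z)$, and for a cusp fiber ($\mathrm{II}^2$, $\mathrm{II}^3$, $\mathrm{II}^a$) take the reflection $\tilde\varphi(x,y,z,w)=(x,y,z,-w)$; in each case $\tilde\varphi$ reverses the orientation of the source, $\varphi$ can be taken to be the identity or a single reflection of the target, and $\varphi$ restricts to the identity on $\mathfrak{F}(f)$, so the orientation condition on \eqref{eq:base} holds automatically (for the cusp fibers one extends $\tilde\varphi$ over the regular circles of the fiber in the obvious way). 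For the codimension-$3$ achiral fibers $\mathrm{III}^4$, $\mathrm{III}^6$ and $\mathrm{III}^b,\mathrm{III}^c,\mathrm{III}^d,\mathrm{III}^e$ — here the codimension equals $\dim N$, so by \fullref{dfn:achiral} the condition on \eqref{eq:base} is vacuous and achirality means merely the existence of an orientation-reversing $\tilde\varphi$ — I would read off from the explicit fiber pictures of \fullref{section3} (and the normal forms of \fullref{prop:stable}) a mirror symmetry of the whole local configuration that reverses the orientation of $M$; one checks case by case that each of $\mathrm{III}^4$, $\mathrm{III}^6$, $\mathrm{III}^b$, $\mathrm{III}^c$, $\mathrm{III}^d$, $\mathrm{III}^e$ admits such a reflection. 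This verification is the longest part.

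For $\mathrm{III}^5$ and $\mathrm{III}^7$ — the case $\mathrm{III}^8$ being already settled by the Corollary above — I would argue exactly as for $\mathrm{III}^8$. Orienting the source $4$--manifold and giving the regular arcs of the fiber their ``fiber-first'' orientations as in \fullref{section5}, one extracts from these orientations, together with the finer combinatorial structure of the fiber, a sign $\in\{+1,-1\}$ that is independent of the auxiliary choices (the local orientation of $N$, the choice of a $1$--octant, and so on) but is reversed when the orientation of $M$ is reversed, just as in \fullref{rem:ori-rev}. Then, as in the proof of the Corollary, an orientation-reversing $\tilde\varphi$ would make a representative and its source-orientation-reversal $C^0$--equivalent through an orientation-preserving homeomorphism of the sources, hence of equal sign, contradicting the sign's sensitivity to the orientation of $M$. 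Therefore $\mathrm{III}^5$ and $\mathrm{III}^7$ are chiral, and with the achirality of the other eleven connected fibers this proves the proposition.

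The logical structure is routine given \fullref{section2}; the real work, and the main obstacle, is the case-by-case bookkeeping — producing an honest orientation-reversing self-equivalence for each of the eleven achiral fibers (the codimension-$3$ ones genuinely require inspecting the fibers, not just the source-map normal forms), and, for $\mathrm{III}^5$ and $\mathrm{III}^7$, pinning down the $\pm 1$ sign and checking both that it is well defined and that it changes under reversal of the source orientation. All of this is a finite check with the machinery of \cite{Saeki04}, but that is where the content lies.
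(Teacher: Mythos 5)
Your strategy is essentially the paper's own: show $\mathrm{III}^5$ and $\mathrm{III}^7$ are chiral by defining a sign for them exactly as for $\mathrm{III}^8$ and running the orientation-reversal contradiction from the corollary, and show every other fiber in the list is achiral by exhibiting an explicit orientation-reversing self-equivalence $(\tilde\varphi,\varphi)$, with the bulk of the work left as a finite case-by-case check. The only organizational difference is your preliminary reduction to the fourteen connected singular fibers; that is harmless here, but note that the remarks before the proposition only justify ``chiral $\sqcup$ achiral \emph{regular}'' (which is all you actually need, since a $\mathrm{III}^5$, $\mathrm{III}^7$ or $\mathrm{III}^8$ component can only be accompanied by regular circles for codimension reasons), and that assembling achirality of a disconnected fiber from its components requires the component symmetries to live over one and the same target homeomorphism --- which works because, as in the paper, they can be taken over $\id_U$ or a single reflection.

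Two of your explicit homeomorphisms are wrong as written, though fixably so. First, the swap $(x,y,z,w)\mapsto(x,y,w,z)$ does not commute with the indefinite fold normal form $(x,y,z^2-w^2)$ over the identity; use $(x,y,z,w)\mapsto(x,y,z,-w)$ (or compose with the target reflection $Z\mapsto -Z$), and remember that the local symmetry must still be extended over the two loops of the figure-eight fiber. Second, $\mathrm{II}^2$ and $\mathrm{II}^3$ are \emph{not} cusp fibers: each contains two indefinite fold points (only $\mathrm{II}^a$ contains a cusp point), so a single reflection in the cusp normal form $(x,y,z^3+xz-w^2)$ does not apply to them. For these one needs compatible local involutions at the two fold points, over a common $\varphi$, extended over the rest of the fiber by the relative Ehresmann fibration theorem --- which is precisely the $\mathrm{II}^2$ case the paper works out in detail. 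With these corrections your argument coincides with the paper's proof; the remaining codimension--$3$ reflections for $\mathrm{III}^4$, $\mathrm{III}^6$, $\mathrm{III}^b$--$\mathrm{III}^e$ and the well-definedness of the signs for $\mathrm{III}^5$, $\mathrm{III}^7$ are exactly the details the paper also leaves to the reader.
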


\begin{proof}
For fibers of types $\mathrm{III}^5$ and $\mathrm{III}^7$,
we can define their signs as for 
a $\mathrm{III}^8$ type fiber. 
Therefore, they are chiral.
Details are left to the reader.

For the other fibers,
we can find homeomorphisms 
$\tilde{\varphi}$ and $\varphi$ as in 
\fullref{dfn:achiral}.
For example, let us consider a 
$\mathrm{II}^2$ type fiber. Let 
$f \co (M, f^{-1}(y)) \to (N, y)$
be a proper smooth map germ
representing a fiber of
type $\mathrm{II}^2$, and
let $q_1$ and $q_2$ be the two singular points
contained in $f^{-1}(y)$, both of
which are indefinite fold points.
We fix orientations of $M$ and $N$
near $f^{-1}(y)$ and $y$ respectively.
Then the regular part of $f^{-1}(y)$
is naturally oriented by the ``fiber
first" convention. 

It is easy to show that the
involution of $f^{-1}(y)$ as in \fullref{inv1}
reverses the orientation of the regular
part of $f^{-1}(y)$. Note that this
involution fixes the two singular points
$q_1$ and $q_2$ pointwise.

\begin{figure}[ht!]
  \centering
  \includegraphics[scale=0.3]{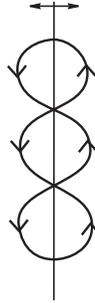}
  \caption{Orientation-reversing involution of a $\mathrm{II}^2$ 
  type fiber}
  \label{inv1}
\end{figure}

By \fullref{section3}, there exist
coordinates $(x_i, y_i, z_i, w_i)$ and
$(X, Y, Z)$ around $q_i$, $i = 1, 2$, and $f(q_i)
= y$ respectively such that $f$ is given by
$$(x_1, y_1, z_1, w_1) \mapsto (x_1, y_1, z_1^2 - w_1^2)$$
around $q_1$, and by
$$(x_2, y_2, z_2, w_2) \mapsto (x_2, z_2^2 - w_2^2, y_2)$$
around $q_2$ with respect to these coordinates.
Then we may assume that the above involution
is consistent with the involutions
defined by
$$(x_1, y_1, z_1, w_1) \mapsto (x_1, y_1, -z_1, w_1)$$
around $q_1$ and by
$$(x_2, y_2, z_2, w_2) \mapsto (x_2, y_2, -z_2, z_2)$$
around $q_2$. Then we can extend this involution
of a neighborhood of $\{q_1, q_2\}$ to a self-diffeomorphism
$\tilde{\varphi}$ of $f^{-1}(U)$ for a sufficiently small open disk
neighborhood $U$ of $y$ in $N$ so that the diagram
$$\bfig
\barrsquare<1500,400>[(f^{-1}(U), f^{-1}(y))`
    (f^{-1}(U), f^{-1}(y))`
    (U, y)`
    (U, y);\tilde{\varphi}`f`f`\id_U]
\efig$$
is commutative, by using the relative version
of Ehresmann's fibration theorem
(see Ehresmann \cite{E}, Lamotke \cite[Section~3]{L}, Br\"ocker--J\"anich
\cite[Section~8.12]{BJ}, or the book \cite[Section~1]{Saeki04} by the
first named author), where $\id_U$ denotes the
identity map of $U$.
Note that the diffeomorphism $\tilde{\varphi}$
thus constructed is orientation reversing.
Hence, the fiber $f^{-1}(y)$ is
achiral according to \fullref{dfn:achiral}.

We can use similar arguments for the other fibers
to show that they are achiral. Details are
left to the reader.
\end{proof}

Let us now state the main theorem of this paper.
For a closed oriented $4$--manifold, we denote
by $\sigma(M)$ the signature of $M$. Furthermore,
for a $C^\infty$ stable map $f \co M \to N$
into a $3$--manifold $N$,
we denote by $||\mathrm{III}^8(f)||$ the
algebraic number of $\mathrm{III}^8$ type fibers of $f$;
that is, it is the sum of the signs over all
fibers of $f$ equivalent to a
$\mathrm{III}^8$ type fiber modulo regular
fibers.

\begin{thm}\label{thm:main}
Let $M$ be a closed oriented $4$--manifold and
$N$ a $3$--manifold. Then, for any $C^\infty$ stable map
$f \co M \to N$, we have 
$$\sigma(M) = ||\mathrm{III}^8(f)|| \in \Z.$$
\end{thm}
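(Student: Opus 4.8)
The plan is to deduce \fullref{thm:main} from two facts: an oriented bordism invariance of the algebraic number $\|\mathrm{III}^8(f)\|$, and a single explicit computation. As a preliminary (and working one component of $N$ at a time, so that we may assume $N$ connected), recall that the oriented bordism group $\Omega_4^{SO}(N)$ is infinite cyclic and that the map $\Omega_4^{SO}(N)\to\Omega_4^{SO}$ induced by $N\to\mathrm{pt}$ is an isomorphism: in the Atiyah--Hirzebruch spectral sequence converging to $\Omega_*^{SO}(N)$ the only term in total degree $4$ is $H_0(N;\Omega_4^{SO})$, because $\Omega_1^{SO}=\Omega_2^{SO}=\Omega_3^{SO}=0$ and $H_4(N;\Z)=0$, and no differential hits it. Since $\Omega_4^{SO}\cong\Z$ with the isomorphism realised by the signature, a class $[M,f]$ generates $\Omega_4^{SO}(N)$ if and only if $\sigma(M)=\pm1$. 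Hence it suffices to prove: (a) if $[M_0,f_0]=[M_1,f_1]$ in $\Omega_4^{SO}(N)$ then $\|\mathrm{III}^8(f_0)\|=\|\mathrm{III}^8(f_1)\|$; and (b) $\|\mathrm{III}^8(f_{\mathrm{ex}})\|=1$ for some $C^\infty$ stable map $f_{\mathrm{ex}}\co M_{\mathrm{ex}}\to\R^3$ of a closed oriented $4$--manifold with $\sigma(M_{\mathrm{ex}})=1$ (to obtain a map into an arbitrary $N$ one composes with the inclusion of a coordinate ball, which changes neither $\sigma$ nor the local count). Granting (a) and (b), $\|\mathrm{III}^8(-)\|$ and $\sigma$ are two homomorphisms $\Omega_4^{SO}(N)\cong\Z\to\Z$ that agree on a generator, hence are equal, which is the assertion of the theorem.

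Step (a) occupies \fullref{section6}. One chooses a compact oriented $5$--manifold $W$ with $\partial W=M_0\sqcup(-M_1)$ and a proper $C^\infty$ stable map $F\co W\to N\times[0,1]$ equal to $f_0\times\{0\}$ and $f_1\times\{1\}$ near the boundary; such $(W,F)$ exists precisely because $[M_0,f_0]=[M_1,f_1]$, and $F$ can be taken stable relative to the boundary since $(5,4)$ is a nice dimension pair. By \fullref{thm:5classification} the locus $\mathfrak{F}(F)\subset N\times[0,1]$ of points whose fibre is equivalent modulo regular fibres to the suspension of a $\mathrm{III}^8$ type fibre is a $1$--dimensional submanifold, and its closure is a compact graph whose valence--one ends lie either on $N\times\{0,1\}$ --- where they are exactly the $\mathrm{III}^8$ type fibres of $f_0$ and $f_1$ --- or in the interior at fibres of codimension $4$, that is, at $\mathrm{IV}^\ast$ type fibres. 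Because $W$ is oriented, the sign defined in \fullref{section5} makes sense at every point of $\mathfrak{F}(F)$ and is locally constant along each arc; tracking the boundary--orientation conventions on $M_0$ and $-M_1$, a standard argument on compact $1$--manifolds identifies $\|\mathrm{III}^8(f_0)\|-\|\mathrm{III}^8(f_1)\|$, up to an overall sign, with the sum over the interior codimension $4$ vertices of the signed number of incident $\mathrm{III}^8$ arcs. Thus it suffices to show that at each codimension $4$ fibre of $F$ this signed number is zero.

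This local statement is the main obstacle. Going through the list in \fullref{thm:5classification}, for most codimension $4$ fibres no $\mathrm{III}^8$ arc is incident --- for instance a $\mathrm{III}^8$ arc crossing an extra fold sheet merely passes through a fibre of type $\mathrm{IV}^{0,8}$ or $\mathrm{IV}^{1,8}$ rather than ending there --- so only a few types need genuine work. The decisive one is the indefinite $D_4$ fibre: deforming the normal form $3x^2y-y^3=0$ produces exactly the triangle of three indefinite fold arcs underlying a $\mathrm{III}^8$ type fibre, so several $\mathrm{III}^8$ arcs emanate from it. Here one invokes the order $6$ symmetry of \fullref{rem:symmetry}: the generator $\tilde{\varphi}_{\tau'}$ reverses the orientation of the source while $\varphi_{\tau'}$ preserves the orientation of $N\times[0,1]$, so by \fullref{rem:ori-rev} it carries each incident $\mathrm{III}^8$ arc to one of the opposite sign; the symmetry therefore permutes the incident arcs without fixed points and groups them into pairs of cancelling signs, so their signed sum vanishes. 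A similar direct inspection, using the local normal forms of \fullref{prop:5chara}, disposes of the other relevant codimension $4$ types (in particular the multiple fold points and the butterfly fibre).

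Step (b) is the content of \fullref{section7}. I would take the explicit stable map $f_{\mathrm{ex}}$ of a closed oriented $4$--manifold with signature $+1$ constructed in \cite{Saeki04} and compute $\|\mathrm{III}^8(f_{\mathrm{ex}})\|$ by hand. The geometric mechanism is that the algebraic number of $\mathrm{III}^8$ type fibres is governed by the way the surface of definite fold points sits inside $M_{\mathrm{ex}}$: one computes the self--intersection number of this surface using normal sections provided by the surface of indefinite fold points, and identifies the resulting integer both with $\|\mathrm{III}^8(f_{\mathrm{ex}})\|$ and with $\sigma(M_{\mathrm{ex}})=1$. This is a delicate and rather long computation --- the second obstacle --- which must be carried out carefully, since getting the value $+1$ (rather than $-1$ or $0$) is what pins down the sign conventions of \fullref{section5}. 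Combining (a) and (b) with the reduction of the first paragraph yields $\sigma(M)=\|\mathrm{III}^8(f)\|$ for every $C^\infty$ stable map $f\co M\to N$, completing the proof of \fullref{thm:main}.
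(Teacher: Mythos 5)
Your proposal is correct and takes essentially the same route as the paper: oriented bordism invariance of $||\mathrm{III}^8(\cdot)||$ proved by showing the signed number of $\mathrm{III}^8$ arcs incident to each codimension $4$ fibre vanishes (this is \fullref{lem:incidence} together with \fullref{rem:incidence-achiral}; your order--$6$ symmetry argument at the indefinite $D_4$ point is exactly the paper's mechanism for the achiral types, and your ``pass-through'' fibres are the paper's $\mathrm{IV}^{0,8}$, $\mathrm{IV}^{1,8}$, $\mathrm{IV}^{18}$ with one incoming and one outgoing arc), combined with the explicit signature $+1$ example whose sign is pinned down in \fullref{section7}. The only departure is bookkeeping --- you compute $\Omega_4^{SO}(N)\cong\Z$ for connected $N$ directly via the Atiyah--Hirzebruch spectral sequence, whereas the paper invokes Conner--Floyd modulo odd torsion and an odd-multiple trick to get \fullref{prop:cobord-inv} --- and, like the paper, the two genuinely laborious steps (the incidence checks and the careful sign computation of the example) are identified but left to the detailed analysis of \fullref{section6} and \fullref{section7}.
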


The proof of \fullref{thm:main}
will be given in \fullref{section7}.

Since for an oriented
$4$--manifold, the signature and the
Euler characteristic have the same
parity, we immediately obtain the following,
which was obtained in \cite{Saeki04}.

\begin{cor}\label{cor:Euler}
Let $M$ be a closed orientable $4$--manifold and
$N$ a $3$--manifold. Then, for any $C^\infty$ stable map
$f \co M \to N$,
the number of fibers of $f$ equivalent to
a $\mathrm{III}^8$ type fiber modulo regular
fibers has the same parity
as the Euler characteristic of $M$.
\end{cor}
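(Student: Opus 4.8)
The plan is to deduce the statement directly from \fullref{thm:main} together with an elementary parity observation. First I would note that the quantity in question --- the number of fibers of $f$ equivalent to a $\mathrm{III}^8$ type fiber modulo regular fibers --- is an \emph{unsigned} count, and in particular it does not depend on any choice of orientation of $M$. Since $M$ is orientable, I may therefore fix an orientation of $M$ and apply \fullref{thm:main}, which gives
$$\sigma(M) = ||\mathrm{III}^8(f)|| \in \Z.$$

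Next, recall that $||\mathrm{III}^8(f)||$ is by definition the sum of the signs, each equal to $+1$ or $-1$, over all fibers of $f$ equivalent to a $\mathrm{III}^8$ type fiber modulo regular fibers. Since every such fiber contributes an \emph{odd} integer to this sum, the parity of $||\mathrm{III}^8(f)||$ coincides with the parity of the total number of such fibers. Combining this with the displayed equality, the number of $\mathrm{III}^8$ type fibers has the same parity as $\sigma(M)$.

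Finally I would invoke the standard fact that every closed orientable $4$--manifold satisfies $\sigma(M) \equiv \chi(M) \pmod 2$: writing $b_i$ for the rational Betti numbers and $b_2^{\pm}$ for the dimensions of the maximal positive- and negative-definite subspaces of the intersection form, Poincar\'e duality gives $b_0 = b_4$ and $b_1 = b_3$, whence $\chi(M) = 2b_0 - 2b_1 + b_2 \equiv b_2 \pmod 2$, while $\sigma(M) = b_2^+ - b_2^- \equiv b_2^+ + b_2^- = b_2 \pmod 2$. Chaining the three parity statements yields the corollary.

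There is no genuine obstacle in this argument: the entire content of the corollary is carried by \fullref{thm:main}, whose proof is postponed to \fullref{section7}, and the remaining steps are elementary. (One could instead derive the mod--two statement independently, as in \cite{Saeki04}, but given \fullref{thm:main} the route above is the most direct.)
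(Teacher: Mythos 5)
Your proof is correct and follows the paper's own route: the corollary is deduced immediately from \fullref{thm:main} (after fixing an orientation on $M$), using that the signed count $||\mathrm{III}^8(f)||$ has the same parity as the unsigned count and that $\sigma(M)\equiv\chi(M)\pmod 2$ for a closed oriented $4$--manifold. The paper states exactly this derivation, so there is nothing to add.
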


Note that in the proof of our main theorem,
we do not use the above corollary. In other
words, our proof gives a new proof for
the above modulo two Euler characteristic
formula.

\section{Cobordism invariance of the algebraic number of
$\mathrm{III}^8$ type fibers}\label{section6}

In order to prove \fullref{thm:main},
let us first show that the algebraic number
of $\mathrm{III}^8$ type fibers is an oriented
cobordism invariant of the source $4$--manifold.

Let us begin by a list of chiral singular fibers
of proper $C^\infty$ stable maps of $5$--manifolds
into $4$--manifolds. We can prove the following
proposition by an argument similar to that
in the previous section.

\begin{prop}\label{prop:5chiral}
A singular fiber of a proper $C^\infty$
stable map of an oriented $5$--manifold
into a $4$--manifold is chiral
if and only if it is 
$C^0$ equivalent modulo regular fibers to
a fiber of type
$\mathrm{III}^5$, $\mathrm{III}^7$,
$\mathrm{III}^8$,
$\mathrm{IV}^{0,5}$, $\mathrm{IV}^{0,7}$, 
$\mathrm{IV}^{0,8}$, $\mathrm{IV}^{1,5}$, 
$\mathrm{IV}^{1,7}$, $\mathrm{IV}^{1,8}$, 
$\mathrm{IV}^{10}$, $\mathrm{IV}^{11}$, 
$\mathrm{IV}^{12}$, $\mathrm{IV}^{13}$, 
$\mathrm{IV}^{18}$, $\mathrm{IV}^{g}$, 
$\mathrm{IV}^{h}$, or $\mathrm{IV}^{k}$.
\end{prop}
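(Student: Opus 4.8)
The plan is to go through the classification of \fullref{thm:5classification} case by case. As observed at the end of \fullref{section2}, in codimension $-1$ chirality depends only on the $C^0$ equivalence class modulo regular fibers, so it suffices to decide, for each fiber appearing in the lists $(1)$--$(3)$ of \fullref{thm:5classification}, whether it is chiral. The two recurring techniques are those of \fullref{section5}. To show a fiber is \emph{chiral}, I would attach to it a sign $(=\pm1)$ defined, in the spirit of the sign of a $\mathrm{III}^8$ type fiber, from a fiber-first orientation convention together with a normal-vector construction, and check that reversing the orientation of the source $5$--manifold reverses this sign; the argument used in \fullref{section5} to show that $\mathrm{III}^8$ type fibers are chiral then applies verbatim. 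To show a fiber is \emph{achiral}, I would write down an orientation-reversing involution on a neighborhood of the singular set inside the fiber, read off from the local normal forms of \fullref{prop:5chara}, and extend it over a neighborhood $f^{-1}(U)$ of the whole fiber by the relative version of Ehresmann's fibration theorem, exactly as for the $\mathrm{II}^2$ type fiber in the proof of \fullref{prop:4chiral}; this produces homeomorphisms $\tilde\varphi$, $\varphi$ as in \fullref{dfn:achiral}.

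The first step handles the suspensions, i.e. the fibers in \fullref{thm:5classification}$(1)$. A routine product construction --- splitting according to whether $\mathrm{codim}\,F$ equals $3$ or is less than $3$, and combining the self-equivalence that witnesses achirality of $F$ (in the sense of \fullref{prop:4chiral} and the lemma following \fullref{dfn:achiral}) with either a reflection of $\R$ or $\id_\R$ --- shows that the suspension $F\times\R$ of every achiral fiber $F$ of a map of an oriented $4$--manifold into a $3$--manifold is again achiral, since $F\times\R$ has codimension $<4=\dim(N\times\R)$. Conversely, the suspensions of $\mathrm{III}^5$, $\mathrm{III}^7$ and $\mathrm{III}^8$ are chiral: the sign of such a fiber is defined by intersecting the $4$--dimensional target with a small $3$--disk transverse to the fiber locus and taking the sign, in the sense of \fullref{section5}, of the resulting $\mathrm{III}^5$, $\mathrm{III}^7$ or $\mathrm{III}^8$ type fiber; this is independent of the slice and is reversed by reversing the orientation of the source, so the argument of \fullref{section5} applies. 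Together with \fullref{prop:4chiral}, this shows that a suspension is chiral exactly when it is equivalent modulo regular fibers to $\mathrm{III}^5$, $\mathrm{III}^7$ or $\mathrm{III}^8$.

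The second step handles the disconnected codimension $4$ fibers of \fullref{thm:5classification}$(2)$. Each of these is a disjoint union of suspensions, and (as recalled in \fullref{section2}) a disjoint union is chiral if and only if at least one summand is. A summand of codimension $4$ forbids any further singular summand, while a chiral codimension $3$ summand ($\mathrm{III}^5$, $\mathrm{III}^7$ or $\mathrm{III}^8$) can be combined only with a codimension $1$ summand ($\mathrm{I}^0$ or $\mathrm{I}^1$) to stay in codimension $4$. Hence the chiral members of this list are precisely $\mathrm{IV}^{0,5}$, $\mathrm{IV}^{0,7}$, $\mathrm{IV}^{0,8}$, $\mathrm{IV}^{1,5}$, $\mathrm{IV}^{1,7}$, $\mathrm{IV}^{1,8}$, and every other fiber in the list is achiral because each of its summands is.

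The third step, the connected codimension $4$ fibers of \fullref{fig5}, is where the real work lies, and it is the step I expect to be the main obstacle. Here the codimension equals the dimension of the target, so by \fullref{dfn:achiral} achirality just means that there is an orientation-reversing self-equivalence of the fiber neighborhood over some base homeomorphism, the condition on \eqref{eq:base} being vacuous. For $\mathrm{IV}^{10}$, $\mathrm{IV}^{11}$, $\mathrm{IV}^{12}$, $\mathrm{IV}^{13}$, $\mathrm{IV}^{18}$, $\mathrm{IV}^g$, $\mathrm{IV}^h$ and $\mathrm{IV}^k$ I would construct a sign by the same normal-vector recipe as in \fullref{section5} and deduce chirality from its behaviour under orientation reversal of the source; for every other fiber in \fullref{fig5} I would exhibit an orientation-reversing $\tilde\varphi$. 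For the fiber through an indefinite $D_4$ point this $\tilde\varphi$ is immediate from the order $6$ symmetry of \fullref{rem:symmetry}, whose generator is orientation reversing on the $5$--manifold and orientation preserving on the $4$--manifold; for the fiber through a butterfly point the reflection $y\mapsto-y$ in the corresponding normal form of \fullref{prop:5chara} already does the job; and the remaining achiral fibers are handled by the model-involution-plus-Ehresmann technique. The delicate points, which will make this part long, are checking case by case that the chosen model involution is genuinely orientation reversing on the $5$--manifold and extends over the entire fiber neighborhood, and --- for the $D_4$ and butterfly fibers --- keeping track of how the local symmetries of \fullref{rem:symmetry} interact with the global orientation, so as to be sure that none of the remaining fibers of \fullref{fig5} is chiral. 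As in \fullref{section5} and the proof of \fullref{prop:4chiral}, most of the individual verifications are routine once the models are written down, and I would leave the bulk of them to the reader.
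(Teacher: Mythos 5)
Your proposal is correct in outline and takes essentially the same route as the paper: the paper states this proposition without proof, leaving it to the reader with exactly the hints you elaborate (an argument ``similar to that in the previous section,'' i.e.\ sign constructions to certify the chiral fibers and model involutions extended by the relative Ehresmann theorem to certify the achiral ones, together with the order-$6$ symmetry of an indefinite $D_4$ point from \fullref{rem:symmetry}). One caveat: your disjoint-union step cites \fullref{section2} for a principle that is stated there only when the extra summand is a \emph{regular} fiber, so for the fibers $\mathrm{IV}^{0,5},\dots,\mathrm{IV}^{1,8}$ the chirality claim should instead be justified by the sign construction you already set up in your opening paragraph (a restriction argument alone runs into the orientation condition on \eqref{eq:base}).
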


For example, in order to show that the
fibers of types $\mathrm{IV}^o$,
$\mathrm{IV}^p$ and $\mathrm{IV}^q$
are achiral, we can use the symmetry
of order $6$ of an indefinite $D_4$ point
as in \fullref{rem:symmetry}.
The proof of \fullref{prop:5chiral}
is left to the reader.

Note that for each chiral singular fiber
of codimension $4$, we can define its
sign $(= \pm 1)$, as long as the source $5$--manifold is
oriented. In what follows, we fix such a definition
of a sign for each chiral singular fiber of codimension $4$
once and for all, although we do not mention it
explicitly.

Let $\mathfrak{F}$ be a $C^0$ equivalence
class modulo regular fibers.
For a proper $C^\infty$ stable map
$f \co M \to N$ of an oriented $5$--manifold $M$
into a $4$--manifold $N$, we denote by
$\mathfrak{F}(f)$ the set of all $y \in N$
over which lies a fiber of type $\mathfrak{F}$.
Note that $\mathfrak{F}(f)$ is a regular $C^\infty$
submanifold of $N$ of codimension $\kappa(\mathfrak{F})$,
where $\kappa(\mathfrak{F})$ denotes the
codimension of the $C^0$ equivalence
class modulo regular fibers $\mathfrak{F}$.

In general, if $\mathfrak{F}$ is chiral, then
$\mathfrak{F}(f)$ is orientable. For 
$\mathfrak{F} = \mathrm{III}^8$, we introduce
the orientation on $\mathrm{III}^8(f)$ as follows.

Take a point $y \in \mathrm{III}^8(f)$. 
Note that the singular value set $f(S(f))$ near
$y$ consists of three codimension $1$ ``sheets"
meeting along $\mathrm{III}^8(f)$ in general
position. Let $D_y$
be a small open $3$--disk centered at $y$ in $N$
which intersects $\mathrm{III}^8(f)$ transversely
exactly at $y$ and is transverse to the
three sheets of $f(S(f))$. Put $M' = f^{-1}(D_y)$,
which is a smooth $4$--dimensional submanifold
of $M$ with trivial normal bundle and
hence is orientable.
Let us consider the proper smooth map
$$h = f|_{f^{-1}(D_y)} \co M' \to D_y,$$
which is a $C^\infty$ stable map by
virtue of \fullref{prop:stable}. 
Note that the fiber of $h$
over $y$ is of type $\mathrm{III}^8$.
Let $M''$ be the component of $M'$ containing
the $\mathrm{III}^8$ type fiber.

Let $\mathcal{O}_{M''}$ be the orientation
of $M''$ with respect to which the
$\mathrm{III}^8$ type fiber is positive.
Then let $\mathcal{O}_{\nu}$ be the
orientation of the normal bundle $\nu$ to 
$M''$ in $M$ such that $\mathcal{O}_{\nu}
\oplus \mathcal{O}_{M''}$ is consistent
with the orientation of the $5$--manifold $M$.
By the differential $df \co TM \to TN$ at a point in $M''$,
$\mathcal{O}_{\nu}$ corresponds to
a normal direction to $D_y$ in $N$ at $y$.
Now we orient $\mathrm{III}^8(f)$ at $y$ so that
this direction is consistent with the
orientation of $\mathrm{III}^8(f)$.

It is easy to see that this orientation
varies continuously with respect to $y
\in \mathrm{III}^8(f)$ and hence
defines an orientation on $\mathrm{III}^8(f)$.

Now let $\mathfrak{F}$ be 
the $C^0$ equivalence
class modulo regular fibers
of one of the codimension $4$ fibers 
appearing in \fullref{prop:5chiral};
that is, $\mathfrak{F}$ is a chiral singular fiber
of codimension $4$.
Note that $\mathfrak{F}(f)$ is a discrete set
in $N$. Take a point $y \in \mathfrak{F}(f)$
and a sufficiently small open
disk neighborhood $\Delta_y$ of $y$
in $N$. We orient the source $5$--manifold
so that the fiber over $y$ gets the sign $+1$.
Then $\Delta_y \cap \mathrm{III}^8(f)$
consists of several oriented arcs which have
a common end point at $y$. Let us
define the \emph{incidence coefficient}
$[\mathrm{III}^8:\mathfrak{F}] \in \Z$ to be
the number of arcs coming into $y$
minus the number of arcs going out
of $y$. Note that this does not
depend on the point $y$ nor on the map $f$.

\begin{rem}\label{rem:incidence-achiral}
Let $\mathfrak{F}$ be the $C^0$
equivalence class modulo regular fibers
of a codimension $4$ \emph{achiral} singular fiber.
Then we can define the incidence coefficient
$[\mathrm{III}^8:\mathfrak{F}] \in \Z$
in exactly the same manner as above.
However, this should always vanish, since
the homeomorphism $\varphi$ as in \eqref{eq:chiral}
reverses the orientation of
$\Delta_y \cap \mathrm{III}^8(f)$.
\end{rem}

\begin{lem}\label{lem:incidence}
The incidence coefficient $[\mathrm{III}^8:\mathfrak{F}]$
vanishes for every $C^0$ equivalence class
modulo regular fibers $\mathfrak{F}$
of codimension $4$ that is chiral.
\end{lem}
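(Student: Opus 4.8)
The plan is to reduce the statement to a finite check and carry it out case by case from the classification. By \fullref{prop:5chiral} the $C^0$ equivalence classes modulo regular fibers of codimension $4$ that are chiral are the disconnected ones $\mathrm{IV}^{0,5},\mathrm{IV}^{0,7},\mathrm{IV}^{1,5},\mathrm{IV}^{1,7},\mathrm{IV}^{0,8},\mathrm{IV}^{1,8}$ and the connected ones $\mathrm{IV}^{10},\mathrm{IV}^{11},\mathrm{IV}^{12},\mathrm{IV}^{13},\mathrm{IV}^{18},\mathrm{IV}^g,\mathrm{IV}^h,\mathrm{IV}^k$. Fix such an $\mathfrak{F}$, a proper $C^\infty$ stable map $f\co M\to N$ of an oriented $5$--manifold, a point $y\in\mathfrak{F}(f)$, and a small open disk neighborhood $\Delta_y$ of $y$ in $N$ meeting $\mathfrak{F}(f)$ only at $y$, with $M$ oriented so that the fiber over $y$ has sign $+1$. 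Then $\mathrm{III}^8(f)\cap\Delta_y$ consists of finitely many oriented arcs with common endpoint $y$, and $[\mathrm{III}^8:\mathfrak{F}]$ is the number coming into $y$ minus the number going out of $y$; the whole proof is the computation of this signed count in these finitely many cases.

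First I would treat the disconnected cases. If $\mathfrak{F}$ is one of $\mathrm{IV}^{0,5},\mathrm{IV}^{0,7},\mathrm{IV}^{1,5},\mathrm{IV}^{1,7}$, the fiber over $y$ is a disjoint union of an $\mathrm{I}^0$-- or $\mathrm{I}^1$--component and a $\mathrm{III}^5$-- or $\mathrm{III}^7$--component; after shrinking $\Delta_y$, every fiber over $\Delta_y$ is, modulo regular fibers, a disjoint union of a fiber adjacent to $\mathrm{I}^0$ (resp.\ $\mathrm{I}^1$) and a fiber adjacent to $\mathrm{III}^5$ (resp.\ $\mathrm{III}^7$), none of which is equivalent modulo regular fibers to $\mathrm{III}^8$, so $\mathrm{III}^8(f)\cap\Delta_y=\emptyset$ and $[\mathrm{III}^8:\mathfrak{F}]=0$. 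If $\mathfrak{F}=\mathrm{IV}^{0,8}=\mathrm{I}^0\sqcup\mathrm{III}^8$ or $\mathrm{IV}^{1,8}=\mathrm{I}^1\sqcup\mathrm{III}^8$, the $\mathrm{III}^8$--component survives as one crosses the codimension $1$ sheet carrying the $\mathrm{I}^0$-- (resp.\ $\mathrm{I}^1$--) component, so near $y$ the set $\mathrm{III}^8(f)$ is a single smooth arc through $y$ transverse to that sheet; since the orientation of $\mathrm{III}^8(f)$ is built only from the $\mathrm{III}^8$--component and the orientation of $M$, it extends continuously across $y$, so one end of the arc comes into $y$ and the other goes out, whence $[\mathrm{III}^8:\mathfrak{F}]=1-1=0$.

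For the eight connected fibers I would work directly from the local models: starting from the normal forms of \fullref{prop:stable} for the $\mathrm{III}^8$--fiber and of \fullref{prop:5chara} for $\mathfrak{F}$, describe, as in \cite[Chapter~3]{Saeki04}, the stratified set $f(S(f))\cap\Delta_y$, locate inside it the arcs over which the fiber is equivalent modulo regular fibers to $\mathrm{III}^8$, and decide for each arc---via the ``fiber first'' convention and the orientation of $\mathrm{III}^8(f)$ recalled above---whether it comes into or goes out of $y$, then check that the contributions cancel. I expect the symmetries of the fibers to cut down the work: for the fibers coming from $D_4$ points the order $3$ symmetry of \fullref{rem:symmetry} organizes the $\mathrm{III}^8$--arcs into orbits, and the other connected fibers carry analogous internal symmetries pairing those arcs. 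An equivalent and often cleaner reformulation is that $\mathrm{III}^8(f)$ meets a small $3$--sphere $S^3$ about $y$ transversely in a finite signed set whose total is $\pm[\mathrm{III}^8:\mathfrak{F}]$ and equals the algebraic number of $\mathrm{III}^8$--fibers of the restricted $C^\infty$ stable map $f|_{f^{-1}(S^3)}\co f^{-1}(S^3)\to S^3$, which is completely determined by the local model of $\mathfrak{F}$; this count too can be read off directly. The bookkeeping for these eight connected fibers is the only real obstacle---the disconnected cases and the reduction are immediate---and it is precisely here that, unlike in \fullref{rem:incidence-achiral}, the vanishing cannot be deduced from a symmetry reversing the orientation of $\Delta_y\cap\mathrm{III}^8(f)$ (no such symmetry exists, since $\mathfrak{F}$ is chiral), so the cancellation must be verified by hand.
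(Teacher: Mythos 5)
There is a genuine gap. Your reduction to a finite check over the fourteen chiral codimension $4$ classes of \fullref{prop:5chiral} is the right frame, and your treatment of the six disconnected classes is sound and matches what the paper asserts: for $\mathrm{IV}^{0,5}$, $\mathrm{IV}^{0,7}$, $\mathrm{IV}^{1,5}$, $\mathrm{IV}^{1,7}$ no fiber near $y$ can be of type $\mathrm{III}^8$ modulo regular fibers (distinct codimension $3$ classes cannot be adjacent to one another), and for $\mathrm{IV}^{0,8}$, $\mathrm{IV}^{1,8}$ the $\mathrm{III}^8$ locus is a single oriented arc crossing $y$ transversely to the sheet of the extra component, giving $1-1=0$. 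But for the eight connected classes you only describe a method and explicitly defer ``the bookkeeping,'' and that is exactly where the content of the lemma lies. The paper's proof rests on two concrete facts that you neither state nor derive: that among all chiral codimension $4$ classes only $\mathrm{IV}^{0,8}$, $\mathrm{IV}^{1,8}$ and $\mathrm{IV}^{18}$ have $\Delta_y\cap\mathrm{III}^8(f)\neq\emptyset$, and that in each of these cases $\Delta_y\cap\mathrm{III}^8(f)$ consists of exactly two arcs, one incoming and one outgoing. In particular you never single out $\mathrm{IV}^{18}$ as the unique connected class adjacent to $\mathrm{III}^8$, never compute its incidence number, and leave the non-adjacency of $\mathrm{IV}^{10}$, $\mathrm{IV}^{11}$, $\mathrm{IV}^{12}$, $\mathrm{IV}^{13}$, $\mathrm{IV}^{g}$, $\mathrm{IV}^{h}$, $\mathrm{IV}^{k}$ unverified, so the incidence coefficients are simply not evaluated for more than half the cases.

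A secondary point: your expectation that the connected cases require a hand-verified cancellation over symmetry orbits of many $\mathrm{III}^8$--arcs misreads how the vanishing actually occurs. For seven of the eight connected chiral classes there are no $\mathrm{III}^8$--arcs at all, and for $\mathrm{IV}^{18}$ there are only two, behaving exactly as in the $\mathrm{IV}^{0,8}$ and $\mathrm{IV}^{1,8}$ cases you did handle (one in, one out, by continuity of the orientation of $\mathrm{III}^8(f)$ across $y$). Your alternative reformulation via the linking $3$--sphere is fine as a restatement but is likewise not carried out, and care would be needed not to invoke \fullref{thm:main} there, since the present lemma feeds into its proof. As written, the proposal establishes the lemma only for the disconnected classes and postpones the essential finite check for the connected ones.
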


\begin{proof}
It is not difficult to see that
for $y \in \mathfrak{F}(f)$,
$\Delta_y \cap \mathrm{III}^8(f) \neq \emptyset$
if and only if $\mathfrak{F} = \mathrm{IV}^{0, 8}$,
$\mathrm{IV}^{1, 8}$ or $\mathrm{IV}^{18}$.
Furthermore, for each of these three cases,
the number of arcs of $\Delta_y \cap \mathrm{III}^8(f)$
is equal to $2$ and exactly
one of them is coming into $y$. Thus the
result follows. 
\end{proof}

\begin{rem}\label{rem:submanifold}
The above lemma shows that the closure
of $\mathrm{III}^8(f)$ is a
regular oriented $1$--dimensional submanifold of $N$
near the points over which lies a chiral singular fiber
of codimension $4$. However, the closure of
$\mathrm{III}^8(f)$, as a whole,
is not even a topological manifold in general. For example,
suppose that $f$ admits a $\mathrm{IV}^{22}$ type fiber.
Then the closure of $\mathrm{III}^8(f)$
forms a graph (that is, a $1$--dimensional
complex) and each point
of $\mathrm{IV}^{22}(f)$ is a vertex
of degree $8$, that is, it has
$8$ incident edges. Furthermore, four of them
are incoming edges and the other
four are outgoing edges.
\end{rem}

Let us recall the following definition
(for details, refer to Conner--Floyd \cite{CF}.)

\begin{dfn}
Let $N$ be a manifold and
$f_i \co M_i \to N$ a
continuous map of a closed oriented $n$--dimensional
manifold $M_i$ into $N$, $i = 0, 1$.
We say that $f_0$ and $f_1$ are \emph{oriented
bordant} if there exist a compact oriented 
$(n+1)$--dimensional manifold $W$ 
and a continuous map $F \co W \to N \times [0, 1]$ with
the following properties:
\begin{enumerate}
\item $\partial W$ is identified with the disjoint union
of $-M_0$ and $M_1$, where $-M_0$ denotes
the manifold $M_0$ with the reversed orientation, and
\item $F|_{M_i} \co M_i \to N \times \{i\}$ 
is identified with $f_i$, $i = 0, 1$.
\end{enumerate}
We call the map $F \co W \to N \times [0, 1]$
an \emph{oriented bordism} between $f_0$ and $f_1$.

Note that if $M_0 = M_1$, and $f_0$ and $f_1$
are homotopic, then they are oriented bordant.
Furthermore, if the target manifold $N$ is
contractible, then $f_0$ and $f_1$ are
oriented bordant if and only if their
source manifolds $M_0$
and $M_1$ are oriented cobordant
as oriented manifolds.

For a given manifold $N$ and a nonnegative integer $n$,
the set of all oriented bordism classes of maps
of closed oriented $n$--dimensional manifolds into $N$
forms an additive group under the disjoint union.
We call it the \emph{$n$--dimensional oriented
bordism group of $N$}.
\end{dfn}

Note that in the usual definition,
an oriented bordism is a map
into $N$ and not into $N
\times [0, 1]$. However, it is easy to see that
the above definition is equivalent
to the usual one.

As a consequence of \fullref{lem:incidence}, 
we get the following.

\begin{lem}\label{lem:bord-inv}
Let $N$ be a $3$--manifold and
$f_i \co M_i \to N$ a $C^\infty$
stable map of a closed oriented $4$--manifold
$M_i$ into $N$, $i = 0, 1$. If $f_0$
and $f_1$ are oriented bordant, then
we have
$$||\mathrm{III}^8(f_0)|| = ||\mathrm{III}^8(f_1)||.$$
\end{lem}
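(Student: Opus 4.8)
The plan is to realize an oriented bordism between $f_0$ and $f_1$ as a one--parameter family of stable maps, and then to extract the equality from an elementary signed--endpoint count for the oriented $1$--dimensional complex $\overline{\mathrm{III}^8(F)}$ inside $N\times[0,1]$.

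First I would choose an oriented bordism $F\co W\to N\times[0,1]$ between $f_0$ and $f_1$; thus $W$ is a compact oriented $5$--manifold with $\partial W=(-M_0)\sqcup M_1$ and $F|_{M_i}=f_i$, $i=0,1$. After a preliminary modification near $\partial W$ (which does not change $F|_{\partial W}$) I may assume that $F$ has the product form $f_i\times\id$ on a collar of each boundary component. Since $W$ is compact, $F$ is automatically proper; and since $(5,4)$ is a nice dimension pair, I may --- rel these collars --- approximate $F$ by a proper $C^\infty$ stable map, which I continue to denote $F$. Being a proper $C^\infty$ stable map, $F$ is a Thom map, so the codimension--$3$ singular fiber locus $\mathrm{III}^8(F)\subset N\times[0,1]$ is a $1$--dimensional $C^\infty$ submanifold; and near $N\times\{i\}$ it coincides with $\mathrm{III}^8(f_i)\times(\text{normal interval})$ (the suspension of a $\mathrm{III}^8$ fiber is again a $\mathrm{III}^8$ fiber, cf.\ \fullref{thm:5classification}), meeting $N\times\{i\}$ transversely exactly in the finite set $\mathrm{III}^8(f_i)\times\{i\}$.

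Next I would analyse the compact set $G=\overline{\mathrm{III}^8(F)}$. Since $F$ is a Thom map on a compact manifold, $G$ is a finite $1$--dimensional complex: its open edges are the arcs of $\mathrm{III}^8(F)$, while $G\setminus\mathrm{III}^8(F)$ lies in the union of strata of $N\times[0,1]$ of codimension $\geq 4$ and hence is finite, consisting of the points of $\mathrm{III}^8(f_0)\times\{0\}$ and $\mathrm{III}^8(f_1)\times\{1\}$ on the boundary together with the interior points over which lies a singular fiber of codimension $4$. As in \fullref{rem:submanifold}, since $W$ is oriented I orient $G$ on $\mathrm{III}^8(F)$ by the recipe used to orient $\mathrm{III}^8$ of a proper $C^\infty$ stable map of an oriented $5$--manifold into a $4$--manifold. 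The crucial structural point is that at every interior vertex of $G$ --- that is, over every codimension--$4$ singular fiber met by $\mathrm{III}^8(F)$ --- the number of edges of $G$ coming into the vertex equals the number going out; equivalently, the incidence coefficient $[\mathrm{III}^8:\mathfrak{F}]$ vanishes. This is \fullref{lem:incidence} when the codimension--$4$ class $\mathfrak{F}$ is chiral and \fullref{rem:incidence-achiral} when it is achiral, and it is trivial when $\mathrm{III}^8(F)$ does not reach that fiber locus.

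Finally I would run the endpoint count. For any oriented finite $1$--dimensional complex, the sum over all vertices of (number of incoming edges) minus (number of outgoing edges) is zero, since each edge contributes $+1$ at its head and $-1$ at its tail. Applying this to $G$ and using that this quantity vanishes at every interior vertex, the corresponding sum over the boundary vertices of $G$ lying on $N\times\{0,1\}$ is zero. Matching, for an edge of $G$ running into a point $y\times\{i\}$, its induced orientation with the sign $(\pm1)$ attached to the $\mathrm{III}^8$ fiber of $f_i$ over $y$, one finds that the boundary vertices on $N\times\{1\}$ contribute in total $||\mathrm{III}^8(f_1)||$ while those on $N\times\{0\}$ contribute $-||\mathrm{III}^8(f_0)||$ --- the extra minus sign reflecting the reversed orientation of $M_0$ in $\partial W=(-M_0)\sqcup M_1$, cf.\ \fullref{rem:ori-rev}. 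Hence $||\mathrm{III}^8(f_1)||-||\mathrm{III}^8(f_0)||=0$, as asserted. I expect this last orientation bookkeeping --- comparing the recipe orientation of $\mathrm{III}^8(F)$ near the two ends of the bordism with the fiber signs, and tracking the sign introduced by $-M_0$ --- to be the only genuinely delicate point; the reduction to a generic $F$ and the combinatorial structure of $G$ are routine given \fullref{prop:stable}, \fullref{lem:incidence} and \fullref{rem:incidence-achiral}.
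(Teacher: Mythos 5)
Your proposal is correct and follows essentially the same route as the paper: approximate the bordism rel collars by a proper $C^\infty$ stable map, orient the closure of $\mathrm{III}^8(F)$ as a finite graph, use the vanishing of the incidence coefficients (\fullref{lem:incidence} together with \fullref{rem:incidence-achiral}) to balance interior vertices, and read off $-||\mathrm{III}^8(f_0)||+||\mathrm{III}^8(f_1)||=0$ from the boundary vertices via \fullref{rem:ori-rev}. Your write-up is, if anything, slightly more explicit than the paper's about the achiral case and the final orientation bookkeeping.
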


\begin{proof}
Let $F \co W \to N \times [0, 1]$ be an
oriented bordism between $f_0$ and $f_1$.
Take sufficiently small
collar neighborhoods $C_0 = M_0 \times
[0, 1)$ and $C_1 = M_1 \times (0, 1]$
of $M_0$ and $M_1$ in $W$ respectively.
We may assume that 
\begin{eqnarray*}
F|_{M_0 \times [0, \varepsilon)} 
& = & f_0 \times \id_{[0, \varepsilon)}, \quad \text{and} \\
F|_{M_1 \times (1-\varepsilon, 1]} & 
= & f_1 \times \id_{(1-\varepsilon, 1]}
\end{eqnarray*}
for a sufficiently small $\varepsilon > 0$.
Furthermore, we may assume that $F$ is a
smooth map with $F^{-1}(N \times (0,1)) = \Int{W}$.
Then by a standard argument, we can
approximate $F$ by a generic map $F'$
such that $F'|_{C_0 \cup C_1} = F|_{C_0 \cup C_1}$
and that $F'|_{\Int{W}} \co \Int{W} \to N \times (0, 1)$
is a proper $C^\infty$
stable map. In the following,
let us denote $F'$ again by $F$.

By \fullref{lem:incidence}, we see
that the closure of
$\mathrm{III}^8(F)$ is a 
finite graph each of whose edge is
oriented. Furthermore, for each
vertex lying in $N \times (0, 1)$,
the number of incoming edges is equal
to that of outgoing edges.
Furthermore, its vertices lying
in $N \times \{0, 1\}$
have degree one
and they coincide exactly with the union of
\begin{eqnarray*}
\mathrm{III}^8(F) \cap (N \times \{0\}) 
& = & \mathrm{III}^8(f_0) 
\quad \text{and} \\
\mathrm{III}^8(F) \cap (N \times \{1\}) 
& = & \mathrm{III}^8(f_1).
\end{eqnarray*}
Therefore, by virtue of \fullref{rem:ori-rev}
we have
$$-||\mathrm{III}^8(f_0)|| + ||\mathrm{III}^8(f_1)||
= 0,$$
since $\partial W = (-M_0) \cup M_1$.
Hence the result follows.
\end{proof}

By combining \fullref{lem:bord-inv} with
a work of Conner--Floyd \cite{CF}, we
get the following.

\begin{prop}\label{prop:cobord-inv}
Let $N$ be a $3$--manifold
and $f_i \co M_i \to N$ a $C^\infty$ stable map
of a closed oriented $4$--manifold $M_i$ into $N$,
$i = 0, 1$. If $M_0$ and $M_1$ are
oriented cobordant as oriented $4$--manifolds,
then we have
$$||\mathrm{III}^8(f_0)|| = ||\mathrm{III}^8(f_1)||.$$
\end{prop}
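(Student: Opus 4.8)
The plan is to repackage \fullref{lem:bord-inv} as a statement about the oriented bordism group of $N$ and then feed it into the low--dimensional computation of oriented bordism due to Conner--Floyd. Throughout, write $\Omega_n^{SO}(X)$ for the $n$--dimensional oriented bordism group of $X$ in the sense of the Definition above, and $\Omega_n^{SO}:=\Omega_n^{SO}(\mathrm{pt})$.

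First I would observe that $f\mapsto\|\mathrm{III}^8(f)\|$ defines a homomorphism $\Phi\co\Omega_4^{SO}(N)\to\Z$. It is well defined: every class is represented by a $C^\infty$ stable map, since $(4,3)$ is a nice dimension pair and stable maps are dense, and \fullref{lem:bord-inv} says the resulting value is independent of the chosen stable representative. Additivity under disjoint union is immediate from the definition of $\|\mathrm{III}^8\|$ as a sum of signs over $\mathrm{III}^8$ type fibers. It therefore suffices to prove that $\Phi([f_0])=\Phi([f_1])$ whenever the sources $M_0$ and $M_1$ are oriented cobordant.

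Next I would invoke the following consequence of the work of Conner--Floyd \cite{CF}. Since $\Omega_1^{SO}=\Omega_2^{SO}=\Omega_3^{SO}=0$ and $\Omega_4^{SO}=\Z$, in the Atiyah--Hirzebruch spectral sequence $E^2_{p,q}=H_p(N;\Omega_q^{SO})$ converging to $\Omega_*^{SO}(N)$ every entry of total degree $4$ vanishes except $E^2_{0,4}=H_0(N;\Z)$: entries with $p\ge4$ vanish because $N$ is a $3$--complex, and entries with $1\le p\le3$ because the corresponding $\Omega_q^{SO}$ is trivial; moreover $E^2_{0,4}$ survives to $E^\infty$ for degree reasons. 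Hence $\Omega_4^{SO}(N)\cong H_0(N;\Z)$, a free abelian group with one generator per connected component $N_c$ of $N$, and the augmentation $\Omega_4^{SO}(N_c)\to\Omega_4^{SO}$ induced by $N_c\to\mathrm{pt}$ is an isomorphism for each $c$. Consequently each summand $\Omega_4^{SO}(N_c)\cong\Z$ is generated by the class of $\iota_c\circ h$, where $h\co\mathbf{CP}^2\to\R^3$ is one fixed $C^\infty$ stable map and $\iota_c\co\R^3\hookrightarrow N_c$ is a coordinate chart, and $\Phi$ restricted to that summand is multiplication by the integer $\lambda:=\|\mathrm{III}^8(h)\|$, which is independent of $c$ because $\mathrm{III}^8(\iota_c\circ h)=\iota_c(\mathrm{III}^8(h))$ with matching signs. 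Decomposing any stable $f\co M\to N$ over the components of $N$ and using that the total augmentation sends $[f]$ to $[M]=\sigma(M)\,[\mathbf{CP}^2]\in\Omega_4^{SO}$, one gets $\Phi([f])=\lambda\,\sigma(M)$. In particular, if $M_0$ and $M_1$ are oriented cobordant then $\sigma(M_0)=\sigma(M_1)$, so $\Phi([f_0])=\Phi([f_1])$, as claimed.

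The main obstacle — indeed the only ingredient beyond \fullref{lem:bord-inv} — is this bordism--theoretic input: that over a $3$--dimensional target the oriented bordism class of a map of a closed oriented $4$--manifold is determined by the oriented cobordism class of its source. It rests squarely on the vanishing of $\Omega_1^{SO}$, $\Omega_2^{SO}$, $\Omega_3^{SO}$ together with $\dim N=3$; for a higher--dimensional target the reduced group $\widetilde\Omega_4^{SO}(N)$ could be nonzero and $\Phi$ might genuinely detect more than the signature. Everything else — density of stable maps, additivity of $\|\mathrm{III}^8\|$, the bordism invariance of \fullref{lem:bord-inv}, and Thom's theorem that the signature gives an isomorphism $\Omega_4^{SO}\cong\Z$ — is already at hand.
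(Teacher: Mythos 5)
Your proposal is correct, and it rests on the same two pillars as the paper's proof---\fullref{lem:bord-inv} together with the Conner--Floyd description of low--dimensional oriented bordism---but it executes the second pillar differently. The paper only invokes the cruder statement that the $4$--dimensional oriented bordism group of $N$ agrees with $\sum_{p+q=4}H_p(N;\Omega_q)$ modulo odd torsion; accordingly it concludes that $mf_0$ and $mf_1$ are oriented bordant for some odd $m$, applies \fullref{lem:bord-inv} to these odd multiples, and divides by $m$ using torsion--freeness of $\Z$. You instead run the Atiyah--Hirzebruch spectral sequence and note that in total degree $4$ every $E^2$--term except $H_0(N;\Omega_4)$ vanishes outright (since $\Omega_1=\Omega_2=\Omega_3=0$ and $H_p(N;\Z)=0$ for $p\geq 4$), so the bordism class of a stable map is detected exactly, with no torsion ambiguity, by the cobordism class of its source over each component of $N$; this buys you the sharper intermediate statement $\Phi([f])=\lambda\,\sigma(M)$, which is essentially the skeleton of \fullref{thm:main}, and it also treats a disconnected target explicitly---a case in which the paper's assertion that $mf_0$ and $mf_1$ are oriented bordant needs precisely the componentwise reduction you carry out. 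The one point you wave at is the additivity of $||\mathrm{III}^8||$ under disjoint union: a disjoint union of stable maps with overlapping images need not be stable, so to evaluate $\Phi$ on $k[\iota_c\circ h]$ (or on a sum of classes) one should first place the copies in disjoint charts, or perturb, before counting fibers; this is harmless, since generically the $\mathrm{III}^8$ points of one summand lie over regular values of the other, and the paper's own use of the maps $mf_i$ requires the identical remark. With that sentence added, your argument is a complete, and in fact slightly stronger, proof of the proposition.
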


\begin{proof}
Recall that the oriented cobordism groups
$\Omega_n$ of $n$--dimensional manifolds
for $0 \leq n \leq 4$ satisfy the following:
$$
\Omega_n \cong
\begin{cases}
0, & n = 1, 2, 3, \\
\Z, & n = 0, 4.
\end{cases}
$$
Furthermore, the $4$--dimensional oriented
bordism group of $N$ is isomorphic to
$$\sum_{p+q=4} H_p(N; \Omega_q)$$
modulo (odd) torsion \cite[Section~15]{CF}. 
Therefore, if the $4$--dimensional manifolds
$M_0$ and $M_1$ are oriented cobordant,
then $m f_0$ and $m f_1$ are oriented
bordant for some odd integer $m$, where
$m f_i$ denotes the map
of the disjoint union of $m$ copies of $M_i$
into $N$ such that on each copy it is given by
$f_i$, $i = 0, 1$.

Thus by \fullref{lem:bord-inv}, we have 
$$m||\mathrm{III}^8(f_0)|| = m||\mathrm{III}^8(f_1)||,$$
which implies the desired equality.
This completes the proof.
\end{proof}

\section{An explicit example}\label{section7}

In this section, we study an explicit
example of a $C^\infty$ stable map
of a closed oriented $4$--manifold
with nonzero signature into $\R^3$.
Combining this with \fullref{prop:cobord-inv},
we will prove our main theorem of this paper.

In \cite[Chapter~6]{Saeki04}, the first named
author constructed an explicit example
of a $C^\infty$ stable map $f \co M
\to \R^3$ of a closed $4$--manifold $M$
with exactly one $\mathrm{III}^8$ type fiber
such that $f$ has only fold points as its
singularities. Recall that $M$ is
diffeomorphic to $\C P^2 \sharp 2 \overline{\C P^2}$
if we ignore the orientation.
We would like to orient $M$ and
determine the sign of this $\mathrm{III}^8$ type fiber.

In fact, by \fullref{prop:cobord-inv},
we already know that there exists
a constant $c$ such that the algebraic number
of $\mathrm{III}^8$ type fibers is $c$
times the signature of the source oriented
$4$--manifold (for details, see the
argument in the proof of \fullref{thm:main}
below). In the above-mentioned example, the algebraic
number of $\mathrm{III}^8$ type fibers is
equal to $\pm 1$, and the signature of the
source $4$--manifold is equal to $\pm 1$.
Therefore, this constant $c$ should be
equal to $\pm 1$. Thus, for the proof of
our main theorem, it suffices to
determine the sign of the constant $c$.

This procedure might seem easy, but in
fact it is not. As a matter of fact,
the construction of the above example
in \cite{Saeki04} was already very
complicated, although the example
itself seems to be a natural one. 
Therefore, in this section, we carefully
study the example and determine the sign
of the constant $c$. We will describe
the argument in details, since the
technique in this section can be
very useful in determining the self-intersection
number of the surface of singular point set
in general situations. At the end of
this section, we give a new proof of the
self-intersection number formula based on
our study.

In the construction given in \cite{Saeki04},
the orientation of the source $4$--manifold $M$
was not given explicitly. 
Here, we first orient
the source $4$--manifold so that the $\mathrm{III}^8$ type
fiber gets the positive sign, and then determine
the signature of the source $4$--manifold
with respect to the chosen orientation.

\begin{lem}\label{lem:example}
If we orient the source $4$--manifold
$M$ of the above example so that
the $\mathrm{III}^8$ type fiber is positive,
then the signature of $M$
is equal to $+1$.
\end{lem}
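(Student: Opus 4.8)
The plan is to recall the explicit construction of $f \co M \to \R^3$ from \cite[Chapter~6]{Saeki04}, fix the orientation of $M$ that the lemma prescribes, and then compute $\sigma(M)$ directly from the geometry of the singular set. It is worth noting at the outset that, by \fullref{prop:cobord-inv}, there is a universal constant $c = \pm 1$ with $||\mathrm{III}^8(g)|| = c\,\sigma(M')$ for every $C^\infty$ stable map $g \co M' \to N'$ of a closed oriented $4$--manifold; since in our example there is exactly one $\mathrm{III}^8$ type fiber, orienting $M$ so that this fiber is positive makes $||\mathrm{III}^8(f)|| = +1$, so the lemma is equivalent to the assertion $\sigma(M) = +1$ for that orientation. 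Thus the real content is a concrete signature computation for a specific orientation.

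First I would describe the singular set. Since $f$ has only fold points, $S(f)$ is a closed surface in $M$ that splits as $S_d \sqcup S_i$, where $S_d$ is the surface of definite folds and $S_i$ the surface of indefinite folds; the images $f(S_d)$ and $f(S_i)$, and the way the regular fibers (disjoint unions of circles) are born, die, split and merge over the complementary regions of $\R^3$, are all laid out explicitly in \cite{Saeki04}. Next I would pin down the orientation. Using the definition of a positive $\mathrm{III}^8$ fiber from \fullref{section5} — the ordered triple $\langle w_1,w_2,w_3\rangle$ of normal vectors to the three sheets $f(D_i)$ pointing into a fixed $1$--octant must represent the standard orientation of $\R^3$ — together with the ``fiber first'' convention, one reads off the local orientation of $M$ near the $\mathrm{III}^8$ fiber, hence the global orientation of $M$ (which is connected), and then orients $S_d$, $S_i$ and the normal bundle $\nu_d = \nu(S_d \subset M)$ compatibly.

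The main computation is $\sigma(M)$ itself. Being a fold map, $f$ induces a handle-type decomposition of $M$ fibered over $\R^3$: over the complement of $f(S(f))$ the fibers are unions of circles, a definite fold sheet is where a circle fiber is born or dies, and an indefinite fold sheet is where circle fibers merge or split; from this one reads off a presentation of $H_2(M)$ and its intersection form, with $S_d$ contributing $2$--cycles whose self-intersection numbers are the Euler numbers of the normal bundles of the components of $S_d$. The device announced in \fullref{section1} for computing these is to construct explicit normal sections: push $S_d$ off itself inside $M$ along normal directions manufactured from the surface of indefinite fold points $S_i$ (which attaches to $S_d$ in a controlled way in the model of \cite[Chapter~6]{Saeki04}), and count the resulting zeros with signs dictated by the chosen orientations. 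Carrying this out yields explicit integers; since $M$ is diffeomorphic, ignoring orientation, to $\C P^2 \sharp 2\overline{\C P^2}$, whose signature is $\pm 1$, these integers force $\sigma(M) = +1$ for the orientation making the $\mathrm{III}^8$ fiber positive. As a consistency check one then re-derives the self-intersection number of $S_d$ from this analysis, recovering the classical self-intersection formula and confirming that no sign has been dropped.

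The hard part is the bookkeeping of orientations, and it is genuinely delicate: one must carry simultaneously the standard orientation of $\R^3$, the ``fiber first'' orientation of the fibers, the orientation of $M$ that is \emph{forced} by positivity of the $\mathrm{III}^8$ fiber (rather than chosen freely), and the induced orientations of $S_d$, $S_i$ and $\nu_d$, and then extract from the intricate construction of \cite[Chapter~6]{Saeki04} both the homology classes of the components of $S_d$ and the precise local picture of how $S_i$ meets $S_d$, so that the signed count of zeros of the normal section comes out with the correct sign. I expect this step to occupy the bulk of the section, and it is exactly where the ``careful analysis'' alluded to in the introduction is needed; by comparison the purely topological input — that $S_d$'s normal data governs the signature and that $M = \C P^2 \sharp 2\overline{\C P^2}$ — is routine.
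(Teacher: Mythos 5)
Your computational core is the same as the paper's: orient $M$ by the ``fiber first'' convention so that the $\mathrm{III}^8$ fiber is positive, and compute the normal Euler numbers of the components of the definite fold surface by means of sections manufactured from the nearby fibers and the indefinite fold data (the paper gets $\tilde{S}_0\cdot\tilde{S}_0=-1$ and $\tilde{S}_1\cdot\tilde{S}_1=\tilde{S}_2\cdot\tilde{S}_2=+2$, the case of $\tilde{S}_0$ requiring the passage to the associated $\R P^1$--bundle because only an antipodal \emph{pair} of points is canonical on the regular fibers near an $\mathrm{I}^1$ fiber). The genuine gap is in the step that converts these numbers into $\sigma(M)$. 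You assert that the fold map induces a ``handle-type decomposition'' from which one reads off a presentation of $H_2(M)$ and its intersection form, and that the resulting integers ``force'' $\sigma(M)=+1$; no mechanism is given, and this is not routine: there is no standard recipe extracting the intersection form of a $4$--manifold from a fold map into $\R^3$, and in general the definite fold spheres need not even generate $H_2$ (for the obvious fold map of $S^2\times S^2$ into a tubular neighborhood of the unit sphere in $\R^3$ they span only a rank one subgroup). Moreover you explicitly demote the self-intersection formula to an a posteriori ``consistency check,'' so you cannot quietly rely on it at this point.

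Two repairs are available. The paper's route is to quote the formula $3\sigma(M)=S_0(f)\cdot S_0(f)$ of \eqref{eq:self-intersection} (from \cite{Saeki02}; see also \cite{OSS,Sadykov}), so that $S_0(f)\cdot S_0(f)=-1+2+2=3$ gives $\sigma(M)=+1$ immediately; note that the paper's later re-derivation of this formula (\fullref{cor:int}) uses the main theorem together with Sakuma's congruence \cite{Sakuma93}, not the present computation alone, so using \eqref{eq:self-intersection} here is legitimate and not circular. Alternatively, a self-contained bridge in the spirit of your sketch: the three definite fold spheres are pairwise disjoint, so their Gram matrix with respect to the intersection form is $\mathrm{diag}(-1,2,2)$, which is nondegenerate of signature $+1$; since $b_2(M)=3$ (as $M$ is unoriented-diffeomorphic to $\C P^2\sharp 2\overline{\C P^2}$), these classes span the rational second homology, whence $\sigma(M)=+1$. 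Either bridge must actually be stated; as written, ``these integers force $\sigma(M)=+1$'' is not a proof, and everything downstream of it (including the claimed recovery of the self-intersection formula) is unsupported.
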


\begin{proof}
In general, let $f \co M \to \R^3$ be
a $C^\infty$ stable map of a closed
oriented $4$--manifold into the $3$--dimensional
Euclidean space
which has only fold points as its singularities.
In view of a result in the first named author's paper \cite{Saeki02} (see
also Ohmoto--Saeki--Sakuma \cite{OSS} or Sadykov \cite{Sadykov}), we have
\begin{equation}
3\sigma(M) = S_0(f) \cdot S_0(f),
\label{eq:self-intersection}
\end{equation}
where $S_0(f)$ denotes the surface of definite
fold singularities of $f$, and $S_0(f) \cdot S_0(f)$
denotes its self-intersection number (or
equivalently, its normal Euler number) in $M$.
Therefore, in order to determine the
signature of the source $4$--manifold
of the explicit example mentioned above,
we have only to compute the self-intersection
number $S_0(f) \cdot S_0(f)$.

Let us first consider the $\mathrm{III}^8$
type fiber and denote the 
three singular points in
it by $q_1$, $q_2$ and $q_3$
as in \fullref{3ring1}.
Furthermore, we
orient the regular part of the $\mathrm{III}^8$
type fiber
so that it corresponds to the cyclic order
$\langle q_1, q_2, q_3\rangle$ of the three singular
points.

The image $f(S(f))$ of the singular point
set of $f$ around the point $y$ corresponding
to the $\mathrm{III}^8$ type fiber consists
of three sheets $f(D_1)$, $f(D_2)$ and $f(D_3)$,
where $D_i$ is a small disk neighborhood
of $q_i$ in $S(f)$, $i = 1, 2, 3$. 
We may assume that the three
sheets $f(D_1)$, $f(D_2)$ and $f(D_3)$ 
are situated as in
\fullref{triple0} and that the ``front
octant" $\omega$ in the figure is a $1$--octant; 
that is, the fiber
over a point in the octant is connected.

Let $w_i$ be a normal
vector to the $i$-th sheet of $f(S(f))$ 
in $N$ pointing toward
$\omega$ at a point
incident to that octant, $i = 1, 2, 3$.
We orient $\R^3$ so that the ordered $3$--tuple of
vectors $\langle w_1, w_2, w_3 \rangle$
is consistent with the orientation, that is,
$\R^3$ is endowed with the ``right-handed
orientation" in the usual sense.

Now we orient the source $4$--manifold of $f$
so that the regular part
of the $\mathrm{III}^8$ type fiber gets the
orientation as indicated in \fullref{3ring1}
in the sense of
\fullref{section5} by the ``fiber first"
convention. Then the sign of the 
$\mathrm{III}^8$ type fiber is equal to $+1$.

Recall that $S_0(f)$ consists
of three $2$--spheres $\tilde{S}_0$, 
$\tilde{S}_1$ and $\tilde{S}_2$, and that the
surface $S_1(f)$ of indefinite fold points
is a real projective plane whose image
$P = f(S_1(f))$ is Boy's surface in $\R^3$
(see \fullref{Boy}).
Furthermore, the embedded $2$--sphere
$S_0 = f(\tilde{S}_0)$ surrounds Boy's surface, and
the disjoint $2$--spheres
$S_1 = f(\tilde{S}_1)$ and 
$S_2 = f(\tilde{S}_2)$ are contained in the
bounded region of $\R^3 \setminus P$
so that $S_1$ surrounds $S_2$ (for details, see
\cite[Chapter~6]{Saeki04}).

\begin{figure}[ht!]
\centering
\labellist\small
\pinlabel {attach} [b] at 330 636
\endlabellist
\includegraphics[scale=0.3]{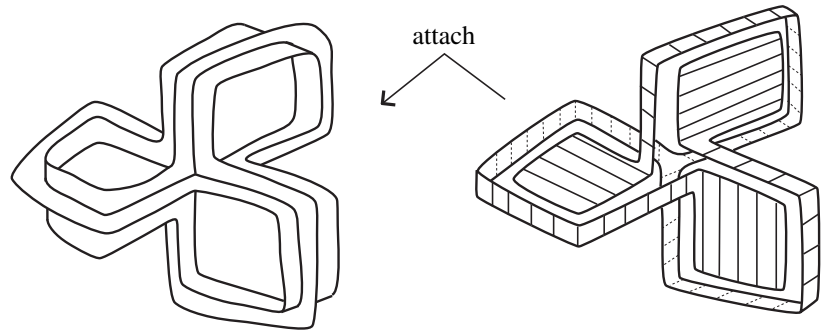}
\caption{Boy's surface $P$}
\label{Boy}
\end{figure}

We have obviously a continuous map
$h_0 \co S_0 \times [0, 1] \to \R^3$
with the following properties:
\begin{enumerate}
\item $h_0|_{S_0 \times \{0\}} = \id_{S_0} \co S_0
\times \{0\} \to S_0$.
\item $h_0|_{S_0 \times (0, 1)}$ is
a diffeomorphism onto the region bounded
by $S_0$ and $P$.
\item $h_0(S_0 \times \{1\}) = P$.
\item $h_0|_{S_0 \times \{1\}}$ is a homeomorphism
outside of a $1$--dimensional subcomplex $C_0$ of $S_0$
as depicted in \fullref{complex1}. The image
$h_0((S_0 \setminus C_0) \times \{1\})$
coincides with the complement to the multiple point
set of Boy's surface $P$ in $P$.
\end{enumerate}

\begin{figure}[ht!]
\centering
\labellist\small
\pinlabel {$C_0$} at 350 661
\endlabellist
\includegraphics[scale=0.25]{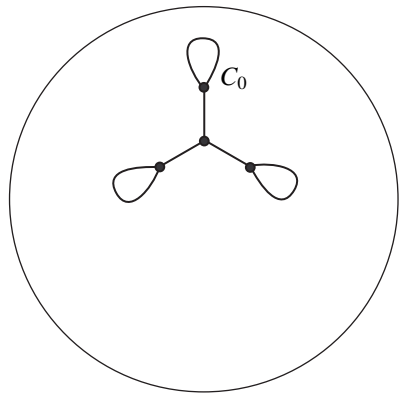}
\caption{$1$--dimensional subcomplex $C_0$ on the $2$--sphere $S_0$}
\label{complex1}
\end{figure}

Set $B = h_0(S_0 \times [0, 1/2])$.
Then $N(\tilde{S}_0) = f^{-1}(B)$
can be identified with a normal disk bundle 
$\nu_0$ to $\tilde{S}_0$ in $M$. 
In order to calculate the self-intersection
number $\tilde{S}_0 \cdot \tilde{S}_0$ in $M$,
let us consider the disk bundle $\tilde{\nu}_0$
over $\tilde{S}_0$ which is obtained
from $\nu_0$ by identifying the antipodal
points on each disk fiber. In other words,
the $S^1$--bundle $\partial \tilde{\nu}_0$
associated with the disk bundle $\tilde{\nu}_0$
corresponds to the $\R P^1$--bundle associated
with the real $2$--plane bundle afforded by $\nu_0$. 
Note that the self-intersection number
of the zero section of $\nu_0$ is equal to
one half of the self-intersection number of that
of $\tilde{\nu}_0$.

Let us construct a section of the $S^1$--bundle
$\partial \tilde{\nu}_0$
associated with $\tilde{\nu}_0$
over a certain subset $\tilde{X}_0$ of $\tilde{S}_0$.
More precisely, for each point $\tilde{x}$ of 
$\tilde{X}_0$, we
will choose a pair of antipodal points on the circle fiber
of $\partial \nu_0$ over $\tilde{x}$ continuously 
with respect to $\tilde{x}$
so that the projection restricted to
the set of all these points is a double
covering map onto $\tilde{X}_0$, where
$\partial \nu_0$ is the $S^1$--bundle associated
with the disk bundle $\nu_0$.

Let $N(C_0)$ be a regular neighborhood of $C_0$
in $S_0$. Set $\tilde{C}_0 = f^{-1}(C_0)$
and $N(\tilde{C}_0) = f^{-1}(N(C_0))$.
Note that $\tilde{C}_0$ is a $1$--dimensional
subcomplex of $\tilde{S}_0$ and that
$N(\tilde{C}_0)$ is a regular neighborhood of
$\tilde{C}_0$ in $\tilde{S}_0$.
We will first construct a
section of the $S^1$--bundle $\partial \tilde{\nu}_0$
over $\tilde{S}_0 \setminus \Int N(\tilde{C}_0)
(\subset \tilde{X}_0)$ as follows.

Take a point $\tilde{x} \in \tilde{S}_0 
\setminus \Int N(\tilde{C}_0)$ and set $x =
f(\tilde{x}) \in S_0$.
Then $h_0(x, 1) \in P$ is the image of
a unique indefinite fold point. Therefore,
$f^{-1}(h_0(x, 1/2))$ can be considered
as a nearby fiber of the fiber
over $h_0(x, 1)$, which is of
type $\mathrm{I}^1$, and hence we can
take a pair of two antipodal points 
on $f^{-1}(h_0(x, 1/2))$ 
canonically as in \fullref{two1}.
We can thus construct a
continuous section of $\partial \tilde{\nu}_0$
over each component of $\tilde{S}_0
\setminus \Int{N(\tilde{C}_0)}$.

\begin{figure}[ht!]
\centering
\labellist\small
\pinlabel {$f^{-1}(h_0(x, 1))$} [l] at 554 520
\pinlabel {$f^{-1}(h_0(x, 1/2))$} [r] at 80 542
\endlabellist
\includegraphics[scale=0.3]{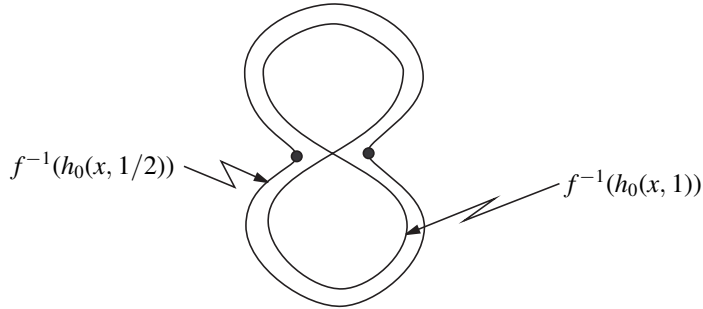}
\caption{Two points on a regular fiber near a singular fiber
of type $\mathrm{I}^1$}
\label{two1}
\end{figure}

Let us consider the twelve bands embedded
in $N(\tilde{C}_0)$ as in \fullref{nbhd1}, where
each band is homeomorphic to $[-1,1]
\times [-1,1]$. Each band is also
considered to be a $1$--handle attached
to $\tilde{S}_0 \setminus \Int{N(\tilde{C}_0)}$
along $\{-1,1\} \times [-1,1]$.
We orient the core of each band
as in the figure, where a core
is an arc properly embedded in a band corresponding to
$[-1,1] \times \{0\}$. The subset $\tilde{X}_0$
is the union of $\tilde{S}_0 \setminus \Int{N(\tilde{C})}$
and the twelve bands. 
Let us extend the
section of $\partial \tilde{\nu}_0$ 
over $\tilde{S}_0 \setminus \Int{N(\tilde{C}_0)}$
constructed above
through
the twelve bands as follows.

\begin{figure}[ht!]
\centering
\labellist\small
\pinlabel {$N(\wwtilde{C}_0)$} [l] at 408 676
\endlabellist
\includegraphics[scale=0.3]{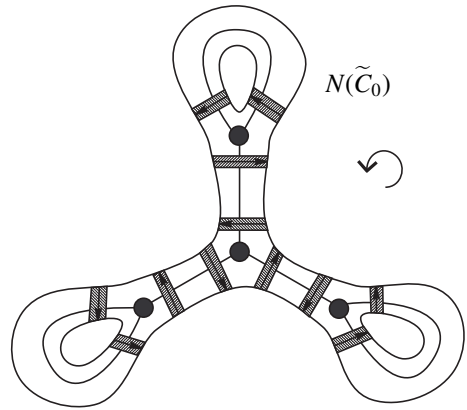}
\caption{Twelve bands in $N(\tilde{C}_0)$}
\label{nbhd1}
\end{figure}

Take a band and let $\tilde{\alpha}$ be
its core. Set $\alpha = f(\tilde{\alpha})$.
Since $\alpha' = h_0(\alpha \times \{1/2\})$
is close to the transverse double
point of $f(S(f))$ as in
\fullref{fig3} (2), the regular fibers
over the two points $\partial \alpha'$ are close to 
a $\mathrm{II}^3$--fiber. 
Furthermore, the pairs of antipodal points
on the circle fibers over the two points
$\partial \tilde{\alpha}$
associated with the above-constructed 
section of $\partial
\tilde{\nu}_0$ over 
$\tilde{S}_0 \setminus \Int{N(\tilde{C}_0)}$
are situated as in \fullref{double1}.
Let us extend the section of $\partial \tilde{\nu}_0$
over $\tilde{S}_0 \setminus \Int{N(\tilde{C}_0)}$
through $\tilde{\alpha}$ so that when one goes
along $\tilde{\alpha}$ in the direction indicated
as in \fullref{nbhd1}, the pair
of antipodal points on the circle
fibers of $\partial \nu_0$ gets the rotation
through the angle $\pi/2$ in the
positive direction of regular fibers.
Then we can naturally extend the section
to the band. We apply this construction to
all the twelve bands to get a section
of $\partial \tilde{\nu}_0$ over $\tilde{X}_0$.

\begin{figure}[ht!]
\centering
\labellist\small
\pinlabel {\rotatebox{270}{\large $=$}} at 630 457
\pinlabel {close} [t] at 335 600
\pinlabel {a $\mathrm{II}^3$--fiber} [t] at 83 520
\endlabellist
\includegraphics[scale=0.27]{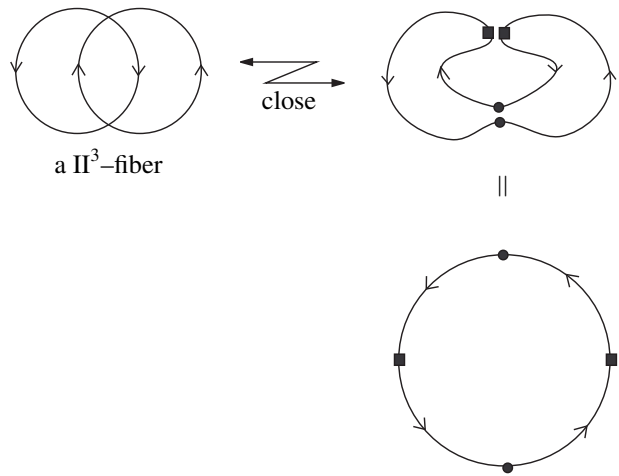}
\caption{Two pairs of antipodal points}
\label{double1}
\end{figure}

The complement of $\Int \tilde{X}_0$ in $\tilde{S}_0$
consists of ten $2$--disks: six rectangular
disks corresponding to the edges of 
$\tilde{C}_0$ and four hexagonal disks
corresponding to the vertices of $\tilde{C}_0$. 
Note that the bundle $\tilde{\nu}_0$ over
each $2$--disk $\Delta$ is trivial, and using
a trivialization, we can define the degree
of the above-constructed section over
$\partial \Delta$.
The self-intersection
number of the zero section of $\tilde{\nu}_0$
is then equal to the sum of these ``degrees"
over the boundaries of the ten $2$--disks. 
Note that here $\tilde{S}_0$ should be
oriented so that its orientation together 
with the orientation of the $2$--disk fiber
gives the orientation of the total space.
Since we have oriented the source $4$--manifold
by the ``fiber first" convention,
we may assume that 
\begin{itemize}
\item[(i)] the induced orientation on a
$2$--disk fiber of the normal disk bundle $\nu_0$
to $\tilde{S}_0$
in $M$ is given by the orientation of the 
boundary regular fiber plus the
``inward normal", and
\item[(ii)] the induced orientation
$\mathcal{O}_{S_0}$ on $S_0$ satisfies
that the ``outward normal" plus $\mathcal{O}_{S_0}$
is consistent with the right-handed orientation
of $\R^3$. 
\end{itemize}
Hence $\mathcal{O}_{S_0}$
is the ``left-handed" orientation when
viewed from outside (see \fullref{nbhd1}). 
Here we adopt the
convention that the figure
of $N(\tilde{C}_0)$ (\fullref{nbhd1})
is consistent with that of $N(C_0)$ viewed
from outside.

It is easy to see that for each of the
six rectangular $2$--disks of
$\tilde{S}_0 \setminus \Int \tilde{X}_0$, the degree
of the section of $\partial \tilde{\nu}_0$
over its boundary is equal to $-1$, since
when we go around its boundary in its
positive direction, the pair
of antipodal points on the circle
fibers of $\partial \nu_0$ rotates through the
angle $-\pi$.

Let us now consider the contribution of 
each of the other four regions
of $\tilde{S}_0 \setminus \Int \tilde{X}_0$
that are hexagonal.
The image $H$ by $h_0(\ast, 1/2)$
of its $f$--image is close to the
triple point $y$ of $f(S(f))$ and
it lies in a $1$--octant (see \fullref{triple0}).
Recall that $H$ is given the
``left-handed" orientation when
viewed from outside.
Therefore, when we go along the boundary
of the hexagonal disk $H$ in the
positive direction from a
point near the
sheet $f(D_1)$, then the second sheet
that we pass by is the sheet $f(D_2)$.
In this process, the pair of antipodal points
on a circle fiber of $\partial \nu$
corresponding to the sheet $f(D_1)$
makes a rotation through the angle
$\pi/3$ as in \fullref{three1},
since the sign of the $\mathrm{III}^8$ type
fiber is positive.
Therefore,
the degree of the section
of $\partial \tilde{\nu}_0$
over the boundary of the hexagonal disk
is equal to $+1$, since
when we go around its boundary in its
positive direction, the pair
of antipodal points on the circle fibers
of $\partial \nu_0$ rotates through the
angle $\pi$ (see \fullref{three1}). We also note that
this argument can be equally applied to all the
four hexagonal regions (that is, the central hexagonal
region is not an exception), since the argument is
local in nature in the target.

\begin{figure}[ht!]
\centering
\labellist\small
\pinlabel {\large $=$} at 270 630
\pinlabel {$1$} [b] at 12 680
\pinlabel {$2$} [tr] at -30 618
\pinlabel {$3$} [tl] at 54 624
\pinlabel {$1$} [l] at 678 714
\pinlabel {$2$} [b] at 540 790
\pinlabel {$3$} [r] at 414 723
\endlabellist
\includegraphics[scale=0.3]{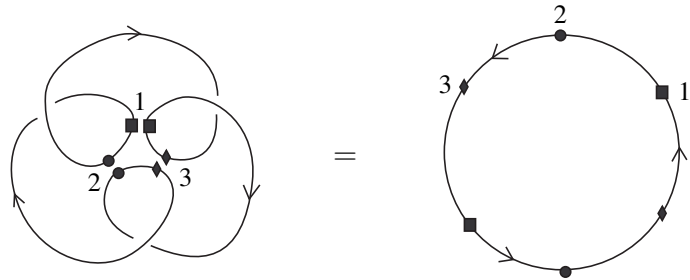}
\caption{Three pairs of antipodal points}
\label{three1}
\end{figure}

Thus the sum of the degrees is equal to
$$(-1) \cdot 6 + (+1) \cdot 4 = -2.$$
Therefore, the self-intersection number
of the zero section of $\tilde{\nu}_0$
is equal to $-2$ and that of $\nu_0$
is equal to $-2/2 = -1$.

The self-intersection
numbers $\tilde{S}_1 \cdot \tilde{S}_1$
and $\tilde{S}_2 \cdot \tilde{S}_2$
can be computed by a similar method
as follows. Let $\nu_i$ be
the normal disk bundle to $\tilde{S}_i$
in $M$, $i = 1, 2$. We can construct
the continuous map $h_i$, $i = 1, 2$,
for $f(\tilde{S}_i) = S_i$ satisfying the
properties similar to those for $h_0$.
Then we take the $1$--dimensional
complexes $C_i$ and $\tilde{C}_i$,
their regular neighborhoods $N(C_i)$
and $N(\tilde{C}_i)$ respectively, 
and the twelve bands in $\tilde{S}_i$,
$i = 1, 2$, 
as above, and define $\tilde{X}_i$ to
be the union of $\tilde{S}_i \setminus
\Int N(\tilde{C}_i)$ and the twelve
bands. Then we can construct
a section of $\partial \nu_i$, $i = 1, 2$,
over $\tilde{X}_i$ by using an argument similar
to that for $\partial \tilde{\nu}_0$.
(Note that in the present case, we
use $\nu_i$ itself, rather than its
associated $\R P^1$--bundle.)
More precisely,
let us consider the regular fiber
over the point $h_i(x, 1/2)$ for
a point $\tilde{x} \in \tilde{S}_i \setminus
\Int N(\tilde{C}_i)$ with $x = f(\tilde{x})$. 
It consists of
two circles $\ell_1$ and $\ell_2$
and we can take a pair
of points which lie on distinct
components as in
\fullref{disconnected1}.

\begin{figure}[ht!]
\centering
\labellist\small
\pinlabel {$\ell_1$} [r] at 8 687
\pinlabel {$\ell_2$} [r] at 38 526
\pinlabel {$f^{-1}(h_i(x, 1))$} [l] at 430 468
\pinlabel {$f^{-1}(h_i(x, 1/2))$} [l] at 430 590
\endlabellist
\includegraphics[scale=0.3]{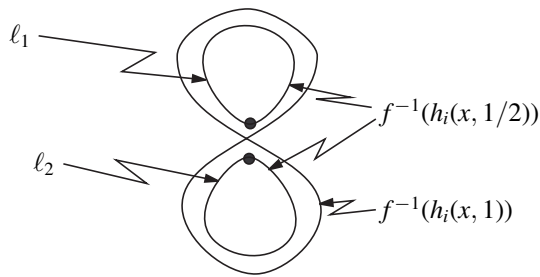}
\caption{Two points on a regular fiber which are
on distinct connected components}
\label{disconnected1}
\end{figure}

We can extend this section through
the twelve bands as in the case of
$\tilde{S}_0$. Let us compute
the sum of the degrees of the section
over the components of $\partial \tilde{X}_i$.
First note that the orientation on $S_i$
is the ``left-handed" orientation when
viewed \emph{from inside}. Then
the contribution of each rectangular
region is equal to $+1$, since
when we go along the core of a band,
the chosen point on a connected
component of a regular fiber
gets the rotation through the
angle $\pi$ (see \fullref{moon1}). 

\begin{figure}[ht!]
\centering
\labellist\small
\pinlabel {close} [t] at 304 595
\pinlabel {a $\mathrm{II}^3$--fiber} [t] at 10 500
\endlabellist
\includegraphics[scale=0.3]{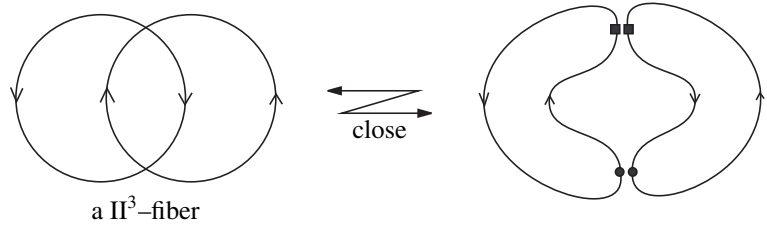}
\caption{Two pairs of points}
\label{moon1}
\end{figure}

As to the other four hexagonal
regions, they correspond to
the triple point of $f(S(f))$.
When we go around the
boundary, the chosen point gets
the rotation through the angle $-2\pi$
(see \fullref{triangle1}).
Hence its contribution to the
self-intersection number is equal to $-1$.

\begin{figure}[ht!]
\centering
\labellist\small
\pinlabel {$1$} [b] at 309 588
\pinlabel {$2$} [r] at 238 488
\pinlabel {$3$} [l] at 391 485
\endlabellist
\includegraphics[scale=0.3]{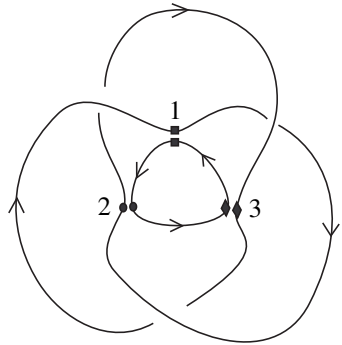}
\caption{Three pairs of points}
\label{triangle1}
\end{figure}

Therefore, the self-intersection number
$\tilde{S}_i \cdot \tilde{S}_i$ is equal to
$$(+1) \cdot 6 + (-1) \cdot 4 = 2$$
for $i = 1, 2$.

Thus the self-intersection number of
$S_0(f)$ in $M$ is equal to
$$S_0(f) \cdot S_0(f) =
\tilde{S}_0 \cdot \tilde{S}_0 + 
\tilde{S}_1 \cdot \tilde{S}_1 + 
\tilde{S}_2 \cdot \tilde{S}_2 = -1 + 2 + 2 = 3.$$
Therefore, the signature of the source
$4$--manifold is equal to $+1$ according to
the formula \eqref{eq:self-intersection}.
In other words, the source $4$--manifold
$M$ is oriented diffeomorphic to
$2 \C P^2 \sharp \overline{\C P^2}$.
This completes the proof of
\fullref{lem:example}.
\end{proof}

Let us now proceed to the proof of our
main theorem of this paper.

\begin{proof}[Proof of \fullref{thm:main}]
Let us fix a $3$--manifold $N$.
For a closed oriented $4$--manifold $M$ and 
a $C^\infty$ stable map $f \co M \to N$
into $N$, let us consider the integer
$$\psi(M, f) = ||\mathrm{III}^8(f)||.$$
By virtue of \fullref{prop:cobord-inv},
$\psi$ depends only on the oriented
cobordism class of $M$. Therefore, $\psi$
induces a map
$$\overline{\psi} \co \Omega_4 \to \Z$$
of the $4$--dimensional oriented
cobordism group to the additive group of integers.
This is clearly a homomorphism. 

Recall that $\Omega_4$ is an infinite
cyclic group generated by the class of
oriented $4$--manifolds with signature $+1$.
In other words, the signature induces
an isomorphism 
$$\sigma \co \Omega_4 \to \Z.$$

Let us consider the explicit example
$f \co 2 \C P^2 \sharp \overline{\C P^2}
\to \R^3$ given in \fullref{lem:example}.
By composing it with an embedding $\R^3 \hookrightarrow N$,
we get a $C^\infty$ stable map of an
oriented $4$--manifold of signature $+1$
into $N$.
This stable map has exactly one $\mathrm{III}^8$ type fiber,
whose sign is equal to $+1$.
Therefore, the homomorphism
$$\overline{\psi} \circ \sigma^{-1} \co \Z \to \Z$$
sends $+1$ to $+1$ and hence is the identity.
Thus we have $\sigma = \overline{\psi}$.
This completes the proof.
\end{proof}

We have a direct consequence of our main
theorem as follows.

\begin{cor}
Let $M$ be a closed oriented $4$--manifold
and $N$ a $3$--manifold. Then every
$C^\infty$ stable map of $M$ into $N$
has at least $|\sigma(M)|$ singular fibers
of type $\mathrm{III}^8$.
\end{cor}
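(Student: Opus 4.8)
The plan is to deduce this immediately from \fullref{thm:main} together with the observation that the algebraic number $||\mathrm{III}^8(f)||$ is, by definition, a signed count of $\mathrm{III}^8$ type fibers in which each such fiber contributes $\pm 1$.

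First I would invoke \fullref{thm:main} to write $\sigma(M) = ||\mathrm{III}^8(f)||$. Next, recall that $||\mathrm{III}^8(f)||$ is the sum of the signs $\pm 1$ taken over all fibers of $f$ that are equivalent to a $\mathrm{III}^8$ type fiber modulo regular fibers. Since $f$ is a $C^\infty$ stable map of a closed manifold, the set of points of $N$ over which such a fiber lies is a finite set (these fibers have codimension $3$, so they occur discretely, and there are finitely many by compactness of $M$); denote the number of such points by $n(f)$. Each contributes $+1$ or $-1$ to the sum, so by the triangle inequality
\[
|\sigma(M)| = \bigl|\,||\mathrm{III}^8(f)||\,\bigr| \le n(f).
\]
Hence $f$ has at least $|\sigma(M)|$ singular fibers of type $\mathrm{III}^8$, which is the assertion.

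There is no real obstacle here: the only point requiring the slightest care is the finiteness of the $\mathrm{III}^8$ fiber locus, which follows from genericity (a proper $C^\infty$ stable map is a Thom map) and the fact that $\mathrm{III}^8$ has codimension $3$ in a $3$--manifold, so the corresponding stratum $\mathrm{III}^8(f)$ is a $0$--manifold, necessarily finite since $M$ is compact.
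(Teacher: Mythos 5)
Your argument is correct and is exactly the intended one: the paper states this corollary as a direct consequence of \fullref{thm:main}, since $||\mathrm{III}^8(f)||$ is by definition a sum of signs $\pm 1$ over the (finitely many, as the $\mathrm{III}^8$ locus is discrete and $M$ is compact) fibers of type $\mathrm{III}^8$, so $|\sigma(M)|$ bounds their number from below. Nothing further is needed.
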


For example, if $M$ is the underlying
real $4$--dimensional manifold of the
complex K3 surface, then every
$C^\infty$ stable map of $M$ into
a $3$--manifold has at least $16$
fibers of type $\mathrm{III}^8$,
although no explicit example
of such a map is known.
Construction of an explicit example
can be an interesting problem.

Our study of the explicit example
gives a new proof to the following
formula, which has been proved
by Ohmoto--Saeki--Sakuma \cite{OSS} and Sadykov \cite{Sadykov}.

\begin{cor}\label{cor:int}
Let $f \co M \to N$ be a 
$C^\infty$ stable map of a closed
oriented $4$--manifold $M$ into an orientable
$3$--manifold $N$. Then we have
$$S(f) \cdot S(f) = 3 \sigma(M),$$
where $S(f) \cdot S(f)$ is the self-intersection
number of the surface of singular point set
of $f$ in $M$.
\end{cor}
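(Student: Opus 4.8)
The plan is to derive this from \fullref{thm:main} together with a generalization of the self-intersection computation carried out in the proof of \fullref{lem:example}. Since \fullref{thm:main} gives $3\sigma(M)=3\,||\mathrm{III}^8(f)||$, it suffices to prove
$$S(f)\cdot S(f)=3\,||\mathrm{III}^8(f)||$$
for every $C^\infty$ stable map $f\co M\to N$. The key point is that the argument in the proof of \fullref{lem:example} is local in the target: the pushing maps $h_i$, the canonical section built from the two distinguished points on a nearby fiber of an $\mathrm{I}^1$--type fiber, the prescribed rotations along the bands lying over the double-point curves (the $\pi/2$ rotation near a $\mathrm{II}^3$--fiber, and so on), and the boundary-degree contributions over the leftover cells are all dictated by the local normal forms of \fullref{prop:stable}. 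Hence the same construction applies to an arbitrary $f$, and to an arbitrary orientable $3$--manifold $N$, since everything takes place in a neighborhood of $f(S(f))$ in $N$.

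Concretely, I would realize $S(f)\cdot S(f)$ as the normal Euler number of the normal disk bundle $\nu$ of the closed surface $S(f)\subset M$, decompose $S(f)$ according to the stratification of $f(S(f))$, build a canonical section of $\partial\nu$ (or of its associated $\R P^1$--bundle, when the relevant nearby fiber is connected, as for the definite-fold components in \fullref{lem:example}) over the part of $S(f)$ lying above the open fold locus, extend it across the bands lying over the double-point curves with the rotations forced by the local models, and then sum the boundary degrees of the resulting section over the finitely many cells left over. For a general $f$ these cells lie over the triple points --- that is, over the $\mathrm{III}^*$--type fibers --- and also over the cuspidal edges and the swallowtail points, all of which must be included in the bookkeeping (this is the one genuinely new feature compared with \fullref{lem:example}). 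Each such degree is a universal constant depending only on the $C^0$ equivalence class of the associated singular fiber, together with its sign when the fiber is of type $\mathrm{III}^8$. Carrying out this finite calculation --- formally identical to, but more elaborate than, the hexagonal- and rectangular-region computations in the proof of \fullref{lem:example} --- should yield $S(f)\cdot S(f)=3\,||\mathrm{III}^8(f)||$, and combining this with \fullref{thm:main} gives $S(f)\cdot S(f)=3\sigma(M)$.

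The hard part will be this last bookkeeping: fixing the correct cell decomposition of $S(f)$ induced by $f(S(f))$, computing every boundary degree with the right orientations and signs (which is precisely where the length of the proof of \fullref{lem:example} comes from), and verifying that, for a completely arbitrary configuration of singular fibers, the total comes out to $3\,||\mathrm{III}^8(f)||$ rather than just for the single explicit example. One should also record that this route still invokes the fold-map formula \eqref{eq:self-intersection} for maps into $\R^3$, via \fullref{thm:main}, but is otherwise independent of the arguments of \cite{OSS} and \cite{Sadykov}; this is the sense in which it gives a new proof of the formula.
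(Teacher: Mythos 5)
Your route is not the paper's, and as it stands it has a genuine gap: the entire content of the claimed identity $S(f)\cdot S(f)=3\,||\mathrm{III}^8(f)||$ is deferred to a ``finite calculation'' that you do not carry out, and the justification you offer for why it must go through --- that the construction in the proof of \fullref{lem:example} is ``local in the target'' --- is false for its crucial ingredient. The maps $h_i\co S_i\times[0,1]\to\R^3$, which are what produce the normal sections of $\partial\tilde\nu_0$ (respectively $\partial\nu_i$) in the first place, are global objects special to that example: they exist because each definite fold sphere $S_i$ bounds a region whose other boundary is Boy's surface, so that the whole sphere can be pushed onto the indefinite fold image. For an arbitrary stable map there is no such structure, the image $f(S(f))$ contains cuspidal edges and swallowtail images, and one must explain how to choose a canonical (enough) normal section over $S(f)$ near every stratum so that the leftover boundary degrees really are universal constants attached to each singular fiber type and sum to $3\,||\mathrm{III}^8(f)||$. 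The authors themselves flag exactly this generalization in \fullref{rem:direct} as something that ``might'' give a direct proof, i.e.\ as an open project, not as the proof of \fullref{cor:int}.

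The paper's actual proof is much softer and avoids the local computation entirely: one first shows that $S(f)\cdot S(f)$ is an oriented cobordism invariant of $M$ (by the argument of \cite[Proof of Lemma~3.2]{Sadykov} together with Conner--Floyd \cite{CF}, exactly as in \fullref{prop:cobord-inv}), so that $S(f)\cdot S(f)=3c\,\sigma(M)$ for a universal constant $c$; evaluating on the explicit example, where $S(f)\cdot S(f)=\pm3$ and $3\sigma(M)=\pm3$, gives $c=\pm1$; and Sakuma's congruence $S(f)\cdot S(f)\equiv 3\sigma(M)\pmod 4$ \cite{Sakuma93} forces $c=+1$. Note also that the paper deliberately proves the corollary \emph{without} using \eqref{eq:self-intersection}, whereas your route, passing through \fullref{thm:main} and hence through the proof of \fullref{lem:example}, does use it; so even if the missing computation were supplied, your argument would not achieve the independence from \eqref{eq:self-intersection} that the paper's proof is designed to have.
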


Note that the formula \eqref{eq:self-intersection}
follows from \fullref{cor:int}.
We give a proof to the above corollary
without using the formula \eqref{eq:self-intersection}.

\begin{proof}[Proof of \fullref{cor:int}]
It is not difficult to show that the
self-intersection number $S(f) \cdot S(f)$
is an oriented cobordism invariant of the
source $4$--manifold $M$ (for this, use
the argument as in \cite[Proof of Lemma~3.2]{Sadykov}
together with a result of Conner--Floyd \cite{CF} as in the
proof of \fullref{prop:cobord-inv}).
Therefore, there exists a constant $c$
such that
$$S(f) \cdot S(f) = 3 c \sigma(M)$$
holds for any $C^\infty$ stable map $f$
of a closed oriented $4$--manifold $M$
into an orientable $3$--manifold $N$.

For the explicit example $f$ studied above,
we have\footnote{We know that
$S(f) \cdot S(f) = 3 \sigma(M) = 3$:
however, in order to show this, we used the formula
\eqref{eq:self-intersection}. Here we are
proving the corollary without
using \eqref{eq:self-intersection}.} 
$S(f) \cdot S(f) = \pm 3$ and
$3 \sigma(M) = \pm 3$. Therefore, the
constant $c$ must be equal to $\pm 1$.

On the other hand, by a result
of Sakuma \cite{Sakuma93}
we have
$$S(f) \cdot S(f) \equiv 3 \sigma(M) \pmod{4}.$$
Therefore, the constant $c$ must be equal
to $+1$. 
This completes the proof.
\end{proof}

Recall that Sadykov \cite{Sadykov}
proved \fullref{cor:int}
by a characteristic class argument
together with Sakuma's result \cite{Sakuma93}.

Now some remarks concerning our
result are in order.

\begin{rem}
For a closed oriented $4$--manifold $M$, let
$n_+$ and $n_-$ be arbitrary integers such that
$\sigma(M) = n_+ - n_-$. Then does there
exist a $C^\infty$ stable map $f \co M \to N$
that has exactly $n_+$ positive singular fibers
of $\mathrm{III}^8$ type and $n_-$ negative ones?
The authors do not know the answer to the question.
\end{rem}

\begin{rem}
In \cite{Saeki93} the first named author
proved that if $f \co M \to N$ is a $C^\infty$
stable map of a closed oriented $4$--manifold
into a $3$--manifold with only definite
fold singularities, then $\sigma(M) = 0$.
This follows from our main theorem
as well.
\end{rem}

\begin{rem}\label{rem:direct}
The technique of this section 
to determine 
the self-intersection number of the
surface of definite fold points may be
generalized in a more general setting.
This might give a new direct proof
of our main theorem. 
\end{rem}

\begin{rem}\label{rem:Lef}
It is known that there exist oriented
surface bundles over oriented surfaces
with non-zero signatures (for example,
see Meyer \cite{Meyer}). This means that
we cannot expect a similar signature
formula for $C^\infty$ stable maps
of closed oriented $4$--manifolds into
surfaces. 

Note that if the fiber genus
$g \leq 2$, then there is a
signature formula for Lefschetz fibrations
in terms of singular fibers (see the work of Matsumoto
\cite{Matsumoto1,Matsumoto2}). 
We also have a similar formula for hyperelliptic
Lefschetz fibrations of any genus (see Endo \cite{Endo}).
It may be possible to prove these formulas
by using our main theorem as follows.
Let $M$ be a Lefschetz fibration over a
surface $B$, and
consider a line bundle $N$ over $B$.
Note that $N$ is a $3$--manifold.
Then we can consider a generic map $f \co M
\to N$ which makes the diagram
$$\bfig
  \barrsquare/->`->`->`=/<500,300>[M`N`B`B;f```]
  \efig$$
commutative, where the vertical arrows are relevant
fibration maps. In other words, we consider
a generic family of fiberwise functions.
By applying our \fullref{thm:main} to
$f$, we might be able to get a signature
formula for certain Lefschetz fibrations.
\end{rem}

\section{Universal complex of chiral singular fibers}
\label{section8}

In \cite{Saeki04}, the universal complex
of singular fibers was introduced 
as a refinement of Vassiliev's universal
complex of multi-singularities
(see Vassiliev \cite{V}, Kazaryan \cite{Ka} or Ohmoto \cite{Ohmoto}), and
it was shown that its cohomology classes
give invariants of cobordisms of singular maps
in the sense of Rim\'anyi and Sz\H{u}cs \cite{RS}.
In this section, we study the universal
complex (with integer coefficients)
of singular fibers corresponding to
chiral singular fibers and give
an interpretation of our main theorem
in terms of the theory of universal complex
of singular fibers.

We can define the universal complex of
chiral singular fibers
for proper $C^\infty$ stable maps of
oriented $5$--manifolds into $4$--manifolds
by exactly the same
procedure as in \cite{Saeki04}
as follows. 

For $\kappa$ with $3 \leq \kappa \leq 4$, let
$C^\kappa$ be the free $\Z$--module
generated by the $C^0$ equivalence classes
modulo regular fibers
of chiral singular fibers of codimension
$\kappa$. Note that $\rank C^3 = 3$ and
$\rank C^4 = 14$
according to \fullref{prop:5chiral}.
Since there exist
no chiral singular fibers of codimension
$\kappa \neq 3, 4$, we put
$C^\kappa = 0$ for $\kappa \neq 3, 4$.
Note that for $\kappa = 4$, we take
the $C^0$ equivalence classes modulo
regular fibers of chiral
singular fibers with positive signs as generators,
and we consider those with negative
signs to be $-1$ times the corresponding
class with positive sign.

The coboundary homomorphism
$\delta_3 \co C^3 \to C^4$ is defined by
$$\delta_3(\mathfrak{G}) = \sum_{\kappa(\mathfrak{F}) = 4}
[\mathfrak{G}: \mathfrak{F}]\mathfrak{F}$$
for every generator $\mathfrak{G}$ of $C^3$,
where $[\mathfrak{G}: \mathfrak{F}] \in \Z$
is the incidence coefficient which can
be defined by exactly the same method
as for $[\mathrm{III}^8: \mathfrak{F}]$
(see \fullref{section6}).
Note that all the other coboundary
homomorphisms $\delta_\kappa$, $\kappa \neq
3$, are necessarily trivial.

We call the resulting cochain complex 
$(C^\kappa, \delta_\kappa)_\kappa$
the \emph{universal complex of
chiral singular fibers} for proper
$C^\infty$ stable maps of oriented $5$--manifolds
into $4$--manifolds. Note that its
unique cohomology group that makes sense
is its third cohomology group, and
is nothing but the kernel of the coboundary
homomorphism $\delta_3$.

Then we get the following.

\begin{prop}\label{prop:univ}
The $3$--dimensional cohomology group
of the universal complex of chiral singular fibers
for proper $C^\infty$ stable maps of oriented
$5$--manifolds into $4$--manifolds
is an infinite cyclic group
generated by the $C^0$ equivalence
class modulo regular fibers of $\mathrm{III}^8$ type fibers.
\end{prop}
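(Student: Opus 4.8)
The plan is to compute $\ker\delta_3$, since as already observed the only coboundary homomorphism that can be nonzero is $\delta_3\co C^3\to C^4$ and the third cohomology group of the universal complex of chiral singular fibers is exactly $H^3=\ker\delta_3$. Recall that $C^3$ is free of rank $3$ on the classes $\mathrm{III}^5$, $\mathrm{III}^7$, $\mathrm{III}^8$, so a general element of $C^3$ is $a\,\mathrm{III}^5+b\,\mathrm{III}^7+c\,\mathrm{III}^8$ with $a,b,c\in\Z$. By \fullref{lem:incidence}, every incidence coefficient $[\mathrm{III}^8:\mathfrak{F}]$ with $\mathfrak{F}$ a chiral class of codimension $4$ vanishes, so $\delta_3(\mathrm{III}^8)=0$ and $\delta_3(a\,\mathrm{III}^5+b\,\mathrm{III}^7+c\,\mathrm{III}^8)=a\,\delta_3(\mathrm{III}^5)+b\,\delta_3(\mathrm{III}^7)$. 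Hence, once we know that $\delta_3(\mathrm{III}^5)$ and $\delta_3(\mathrm{III}^7)$ are linearly independent in $C^4$ (equivalently in $C^4\otimes\Q$), membership of $a\,\mathrm{III}^5+b\,\mathrm{III}^7+c\,\mathrm{III}^8$ in $\ker\delta_3$ forces $a=b=0$, so $\ker\delta_3=\Z\,\mathrm{III}^8\cong\Z$ with $\mathrm{III}^8$ as a generator, which is the assertion.

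It therefore remains to establish this linear independence, and the plan is to do so by the adjacency analysis of \fullref{section6}. For a proper $C^\infty$ stable map $f$ of an oriented $5$--manifold into a $4$--manifold having a fiber of type $\mathfrak{F}$ (one of the fourteen chiral codimension $4$ classes of \fullref{prop:5chiral}) over a point $y$, one takes a small disk neighborhood $\Delta_y$ of $y$, orients the source so that the fiber over $y$ has sign $+1$, and computes $[\mathrm{III}^5:\mathfrak{F}]$ (respectively $[\mathrm{III}^7:\mathfrak{F}]$) as the number of oriented arcs of $\Delta_y\cap\mathrm{III}^5(f)$ (respectively $\Delta_y\cap\mathrm{III}^7(f)$) coming into $y$ minus the number going out; here the relevant local models come from \fullref{prop:5chara} and the fiber pictures of \fullref{thm:5classification}. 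Since $\mathrm{III}^5$, $\mathrm{III}^7$, $\mathrm{III}^8$ correspond to pairwise distinct local degenerations, to prove independence it suffices to exhibit two chiral codimension $4$ classes — for instance one of $\mathrm{IV}^{0,5}$, $\mathrm{IV}^{1,5}$ or one of the connected fibers in \fullref{fig5} whose closure limits onto the $\mathrm{III}^5$--locus but not onto the $\mathrm{III}^7$--locus, together with one that limits onto the $\mathrm{III}^7$--locus — for which the resulting $2\times 2$ matrix of incidence coefficients is nonsingular. This gives $\rank\delta_3\geq 2$, hence $\rank\delta_3=2$ and $\rank\ker\delta_3=1$, and combined with the reduction above it finishes the proof; as a byproduct one also obtains the full description of $\delta_3$, though that is not needed here.

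The hard part will be precisely this last computation: deciding, for each of the fourteen chiral codimension $4$ classes, whether and with what signed multiplicity the $\mathrm{III}^5$-- and $\mathrm{III}^7$--loci are adjacent to it. This is the same sort of explicit local-model bookkeeping, with careful attention to orientations, that underlies \fullref{thm:5classification} and the proof of \fullref{lem:incidence}: no individual case is difficult, but there are many of them. In practice one need only push this bookkeeping far enough to produce a single nonsingular $2\times 2$ minor, which keeps the verification reasonably short.
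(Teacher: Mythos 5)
Your structural reduction is exactly the paper's: $H^3=\ker\delta_3$, $\delta_3(\mathrm{III}^8)=0$ by \fullref{lem:incidence}, and the proposition then follows once $\delta_3(\mathrm{III}^5)$ and $\delta_3(\mathrm{III}^7)$ are shown to be linearly independent, for which a nonsingular $2\times 2$ minor of the incidence matrix suffices. But that last step is the entire remaining content of the proposition, and you never actually produce such a minor: you only assert that the bookkeeping can be ``pushed far enough'', which leaves the proof an outline rather than an argument. The paper supplies precisely the missing input, namely the concrete values
$[\mathrm{III}^5:\mathrm{IV}^{11}]\neq 0$, $[\mathrm{III}^5:\mathrm{IV}^{10}]=0$, $[\mathrm{III}^7:\mathrm{IV}^{11}]=0$, $[\mathrm{III}^7:\mathrm{IV}^{10}]\neq 0$,
i.e.\ a diagonal nonsingular $2\times 2$ block coming from the two \emph{connected} codimension--$4$ fibers $\mathrm{IV}^{10}$ and $\mathrm{IV}^{11}$.

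Moreover, your first-named candidate columns are poorly chosen: for the disconnected classes $\mathrm{IV}^{0,5}$ and $\mathrm{IV}^{1,5}$ (a $\mathrm{III}^5$ component crossing a fold sheet transversely) the local picture is the same as the one used to prove \fullref{lem:incidence} for $\mathrm{IV}^{0,8}$ and $\mathrm{IV}^{1,8}$: $\Delta_y\cap\mathrm{III}^5(f)$ consists of two arcs, one incoming and one outgoing, so $[\mathrm{III}^5:\mathrm{IV}^{0,5}]=[\mathrm{III}^5:\mathrm{IV}^{1,5}]=0$ and these columns cannot contribute a nonsingular minor. The genuinely nonzero incidence coefficients arise only at connected codimension--$4$ degenerations (such as the paper's $\mathrm{IV}^{10}$, $\mathrm{IV}^{11}$), where a $\mathrm{III}^5$-- or $\mathrm{III}^7$--arc terminates rather than passes through; identifying these adjacencies and checking the signs is exactly the computation you defer, so as written the proof has a genuine gap at its decisive step.
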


\begin{proof}
Recall that we have exactly three $C^0$
equivalence classes modulo regular fibers
of chiral singular fibers of codimension $3$,
namely, $\mathrm{III}^5$, $\mathrm{III}^7$ and
$\mathrm{III}^8$, by 
\fullref{prop:5chiral}.
By \fullref{lem:incidence}, the incidence
coefficients involving $\mathrm{III}^8$
are all zero and hence $\mathrm{III}^8$
is a cocycle. On the other hand, for the
other two equivalence classes of chiral
singular fibers, we have, for example,
\begin{eqnarray*}
& & [\mathrm{III}^5: \mathrm{IV}^{11}] \neq 0,
\quad 
[\mathrm{III}^5: \mathrm{IV}^{10}] = 0, \\
& & [\mathrm{III}^7: \mathrm{IV}^{11}] = 0,
\quad
[\mathrm{III}^7: \mathrm{IV}^{10}] \neq 0.
\end{eqnarray*}
Therefore, 
a linear combination of 
$\mathrm{III}^5$, $\mathrm{III}^7$ and
$\mathrm{III}^8$ is a cocycle if
and only if the coefficients of
$\mathrm{III}^5$ and $\mathrm{III}^7$
both vanish. Therefore, the
kernel of the coboundary homomorphism $\delta_3$
is infinite cyclic and is generated
by $\mathrm{III}^8$.
This completes the proof.
\end{proof}

According to \fullref{prop:univ}, we
can interpret our main theorem (\fullref{thm:main})
as follows. The $3$--dimensional cohomology
class represented by the cocycle
$\mathrm{III}^8$ of the universal
complex of chiral singular fibers for
proper $C^\infty$ stable maps of oriented
$5$--manifolds into $4$--manifolds
gives a complete invariant of the oriented
cobordism class of the source $4$--manifold.
In particular, for $N = \R^3$,
it gives a complete invariant
of the oriented bordism class
of a $C^\infty$ stable map of a closed
oriented $4$--manifold into $\R^3$.

For related discussions, see \cite{Saeki04}.

We also see that the fiber which
satisfies the property as in 
\fullref{thm:main} should necessarily
be the fiber of type $\mathrm{III}^8$.
This explains the reason why the $\mathrm{III}^8$
type fiber
appeared in the modulo two Euler characteristic
formula in \cite{Saeki04} (see \fullref{cor:Euler}
of the present paper).

\begin{rem}
If we can realize the proof of
our main theorem as mentioned in \fullref{rem:direct}
for arbitrary stable maps of closed oriented null-cobordant
$4$--manifolds into $3$--manifolds,
then that would imply that
$\mathrm{III}^8$ is a cocycle
of the universal complex
(see \cite[Section~12.2]{Saeki04}).
That is, it might be possible
to prove that $\mathrm{III}^8$
is a cocycle without even classifying
the singular fibers.
\end{rem}

\bibliographystyle{gtart}
\bibliography{link}

\end{document}